\newtheorem{example}[theorem]{Example}
\newtheorem{remark}[theorem]{Remark}
\begin{document}

\title{Chain controllability of linear control systems}
\author{Fritz Colonius\\Institut f\"{u}r Mathematik, Universit\"{a}t Augsburg, Augsburg, Germany
\and Alexandre J. Santana and Eduardo C. Viscovini\\Departamento de Matem\'{a}tica, Universidade Estadual de Maring\'{a}\\Maring\'{a}, Brazil}
\maketitle

\begin{center}

\end{center}

\textbf{Abstract: }For linear control systems with bounded control range,
chain controllability properties are analyzed. It is shown that there exists a
unique chain control set and that it equals the sum of the control set around
the origin and the center Lyapunov space of the homogeneous part. For the
proof, the linear control system is extended to a bilinear control system on
an augmented state space. This system induces a control system on projective
space. For the associated control flow attractor-repeller decompositions are
used to show that the control system on projective space has a unique chain
control set that is not contained in the equator. It is given by the image of
the chain control set of the original linear control system.

\textbf{Keywords: }linear control system, chain control set, attractor,
\ Poincar\'{e} sphere

\textbf{MSC codes: }93B045, 37B20

\section{Introduction\label{Section1}}

We study generalized controllability properties of linear control systems on
$\mathbb{R}^{n}$ with control restrictions of the form%
\begin{equation}
\dot{x}(t)=Ax(t)+Bu(t),\,u\in\mathcal{U}, \label{1}%
\end{equation}
where $A\in\mathbb{R}^{n\times n},B\in\mathbb{R}^{n\times m}$, and the set of
control functions is defined by%
\begin{equation}
\mathcal{U}=\{u\in L^{\infty}(\mathbb{R},\mathbb{R}^{m})\left\vert u(t)\in
U\text{ for almost all }t\in\mathbb{R}\right.  \}; \label{U}%
\end{equation}
here the control range $U$ is a compact and convex neighborhood of
$0\in\mathbb{R}^{m}$. We denote the solution for initial condition
$x(0)=x_{0}\in\mathbb{R}^{n}$ and control $u\in\mathcal{U}$ by $\varphi
(t,x_{0},u),t\in\mathbb{R}$.

The main topic of this paper are chain controllability properties (cf.
Definition \ref{Definition_chain_control}). They constitute a weaker version
of approximate controllability (in infinite time) and may be difficult to
distinguish from it in numerical computations. Here small jumps in the
trajectories are allowed, and hence chain controllability is not a physical
notion. In the theory of dynamical systems analogous constructions (going back
to Rufus Bowen and Charles Conley) have been quite successful in order to
describe the limit behavior as time tends to infinity for complicated flows.

Due to the control restriction, the familiar linear algebra tools of linear
systems theory have to be complemented by methods from dynamical systems
theory. An important tool for us is the extension of the linear control system
on $\mathbb{R}^{n}$ to a bilinear control system on $\mathbb{R}^{n}%
\times\mathbb{R}$ and an ensuing projection to projective space $\mathbb{P}%
^{n}$. This provides us with a compactification of the state space and at the
same time introduces convenient linear structures. In fact, the affine control
flow $\Phi:\mathbb{R}\times\mathcal{U}\times\mathbb{R}^{n}\rightarrow
\mathcal{U}\times\mathbb{R}^{n},\Phi_{t}(u,x_{0})=(u(t+\cdot),\varphi
(t,x_{0},u))$ on the state space $\mathcal{U}\times\mathbb{R}^{n}$ associated
with linear control system (\ref{1}) is extended to a linear control flow
$\Phi^{1}$ on the vector bundle $\mathcal{U}\times\mathbb{R}^{n}%
\times\mathbb{R}$ over the base space $\mathcal{U}$. The projection to
$\mathbb{P}^{n}$ allows us to use attractor-repeller decompositions which
clarify the relation between the chain control set (the maximal chain
controllable subset) and the control set (the maximal approximately
controllable subset) in $\mathbb{R}^{n}$. Furthermore, the powerful Selgrade
decomposition can be applied to the linear control flow $\Phi^{1}$ revealing
further relations between the central Selgrade bundle $\mathcal{V}_{c}^{1}$
(i.e., the unique Selgrade bundle where the last component in $\mathcal{U}%
\times\mathbb{R}^{n}\times\mathbb{R}$ is nontrivial) and the chain control set
on $\mathbb{P}^{n}$. In the theory of nonlinear differential equations
analogous extensions of the state space and ensuing projections to the sphere
bear the name of Poincar\'{e} sphere; cf. Perko \cite{Perko}. The Selgrade
decomposition of linear flows on vector bundles (cf., e.g., Salamon and
Zehnder \cite{SalZ88}, Colonius and Kliemann \cite[Theorem 5.2.5]{ColK00})
provides the finest decomposition into exponentially separated subbundles. It
is constructed by the finest Morse decomposition of the induced flow on the
projective bundle $\mathcal{U}\times\mathbb{P}^{n}$, where the Morse sets are
the maximal chain transitive subsets.

Many techniques and results for chain controllability rely on compactness
properties. Hence a major source of difficulties in our case is the fact that
the state space $\mathbb{R}^{n}$ is not compact. The construction using the
Poincar\'{e} sphere admits the description of the system behavior near infinity.

For chain transitivity and chain recurrence in the topological theory of flows
on metric spaces cf. Alongi and Nelson \cite{AlonN07}. Background on control
sets, chain control sets, and control flows is given in Colonius and Kliemann
\cite{ColK00, ColK14}, Kawan \cite{Kawan13}, and also in Desheng Li
\cite{Li07}, who proposes an approach to chain controllability via
differential inclusions. A recent paper \cite{DaSilva23} by da Silva analyzes
compact chain control sets of linear control systems on connected Lie groups.
Ayala, da Silva, and Mamani \cite{AydSM23} explicitly compute control sets for
linear control systems in two dimensional cases. In Colonius and Santana
\cite{ColS23} the approach of the present paper is used for affine flows on
vector bundles and some applications to affine control systems are given.

The main results of this paper are Theorem \ref{Theorem_ccs1} and Theorem
\ref{Theorem_central}. The former theorem characterizes the chain control set
$E$ as the sum of the center Lyapunov space for $A$ (i.e., the sum of the
generalized eigenspaces for eigenvalues with vanishing real part) and the
closure of the control set around the origin. On the way, we need that the
chain recurrent set of the flow for a linear autonomous differential equation
coincides with the center Lyapunov space; cf. Theorem \ref{Theorem_1_Eduardo}.
This is presumably known, but we could not find a reference in the literature,
hence we include a proof. Theorem \ref{Theorem_central} characterizes the
central Selgrade bundle $\mathcal{V}_{c}^{1}$. A consequence presented in
Corollary \ref{Corollary_ccs} is that the chain control set $E_{c}^{1}$ on the
projective Poincar\'{e} sphere $\mathbb{P}^{n}$ coincides with the closure of
the image of the chain control set $E$.

Section \ref{Section2} contains preliminary results on control systems and
recalls notions and results from the topological theory of flows on metric
spaces including Selgrade's theorem for linear flows on vector bundles. It is
noteworthy that we can simplify the notion of chain control sets: It is not
necessary to require that for every $x\in E$ there exists $u\in\mathcal{U}$
with $\varphi(t,x,u)\in E$ for all $t\in\mathbb{R}$, since this holds for
every maximal chain controllable set; cf. Remark \ref{Remark_invariance}.
Section \ref{Section3} proves that the center Lyapunov space of an autonomous
linear differential equation is chain transitive yielding first results on
chain controllability. Section \ref{Section4} provides the characterization of
the chain control set $E$ in $\mathbb{R}^{n}$. Section \ref{Section5} analyzes
the induced system on the Poincar\'{e} sphere, and Section \ref{Section6}
presents two examples.

\textbf{Notation:} The control $u(t)\equiv0$ in $\mathcal{U}$ is denoted by
$0_{\mathcal{U}}$. The trivial subspace $\{0\}\subset\mathbb{R}^{n}$ is
denoted by $0_{n}$ and $0_{1}$ is abbreviated by $0$. The letter $\mathbb{K}$
denotes $\mathbb{R}$ or $\mathbb{C}$.

\section{Preliminaries\label{Section2}}

The first subsection presents elements of the topological theory of flows on
metric spaces. Subsection \ref{Subsection2.2} describes control sets and chain
control sets for control-affine systems and recalls the notion of control
flows. Subsection \ref{Subsection2.3} derives some properties of control sets
for linear control systems.

\subsection{Flows on metric spaces\label{Subsection2.1}}

For the following concepts for flows on metric spaces cf. Alongi and Nelson
\cite{AlonN07} and Colonius and Kliemann \cite[Appendix B]{ColK00},
\cite{ColK14}.

A conjugacy of flows $\Psi$ and $\Psi^{\prime}$ on complete metric spaces $X$
and $X^{\prime}$, respectively, is a homeomorphism $h:X\rightarrow X^{\prime}$
with $h(\Psi(t,x))=\Psi^{\prime}(t,h(x))$ for all $x\in X,t\in\mathbb{R}$.
Where convenient, we also write $\Psi_{t}(x)=\Psi(t,x)$.

For $\varepsilon,\,T>0$ an $(\varepsilon,T)$-chain $\zeta$ for $\Psi$ from $x$
to $y$ is given by $k\in\mathbb{N}\mathbf{,}$ $T_{0},\ldots,T_{k-1}\geq T$,
and $x_{0}=x,\ldots,x_{k-1},x_{k}=y\in X$ with $d(\Psi(T_{i},x_{i}%
),x_{i+1})<\varepsilon$ for $i=0,\ldots,k-1$. For $x\in X$ the $\omega$-limit
set is $\omega(x)=\{y\in X\left\vert \exists t_{k}\rightarrow\infty:\Psi
(t_{k},x)\rightarrow y\right.  \}$ and the $\alpha$-limit set is
$\alpha(x)=\{y\in X\left\vert \exists t_{k}\rightarrow-\infty:\Psi
(t_{k},x)\rightarrow y\right.  \}$. The (forward) chain limit set for $\Psi$
is $\Omega(x)=\{y\in X\left\vert \forall\varepsilon,T>0\,\exists
(\varepsilon,T)\text{-chain from }x\text{ to }y\right.  \}$. The backward
chain limit set $\Omega^{\ast}(x)$ of $x$ is the chain limit set of $x$ for
the time reversed flow $\Psi^{\ast}(t,x):=\Psi(-t,x),t\in\mathbb{R},x\in X$. A
point $x\in X$ is chain recurrent if $x\in\Omega(x)$, and a nonvoid set $Y$ is
called chain transitive if $y\in\Omega(x)$ for all $x,y\in Y$. Observe that
any nonvoid subset of a chain transitive set is chain transitive, and (cf.
\cite[Proposition 2.7.10]{AlonN07}) a set is chain transitive if and only if
its closure is chain transitive. If $X$ is chain transitive for a flow on $X$,
then also the flow is called chain transitive. On a compact metric space the
maximal chain transitive sets, called the chain recurrent components, are the
connected components of the chain recurrent set (i.e., the set of chain
recurrent points) and the flow restricted to a chain recurrent component is
chain transitive.

\begin{proposition}
\label{LemCLSAtReverseTime}For any flow $\Psi$ on $X$ the forward and backward
chain limit sets are related by $y\in\Omega(x)$ if and only if $x\in
\Omega^{\ast}(y)$.
\end{proposition}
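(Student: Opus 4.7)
The plan is to reduce the biconditional to a single direction using the involution $(\Psi^{*})^{*}=\Psi$, and then to construct the reverse chain by directly reversing a forward chain, controlling the error via continuity of the flow.

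Since the time-reversed flow of $\Psi^{*}$ is $\Psi$ itself, the converse implication $x\in\Omega^{*}(y)\Rightarrow y\in\Omega(x)$ will follow from the forward implication $y\in\Omega(x)\Rightarrow x\in\Omega^{*}(y)$ applied to $\Psi^{*}$ in place of $\Psi$. So the task reduces to showing that if $y\in\Omega(x)$, then for every $\varepsilon,T>0$ there exists an $(\varepsilon,T)$-chain from $y$ to $x$ under $\Psi^{*}$.

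Fix such $\varepsilon,T>0$. Given a $(\delta,T)$-chain $x=x_{0},x_{1},\ldots,x_{k}=y$ under $\Psi$ with times $T_{0},\ldots,T_{k-1}\geq T$ and jumps $d(\Psi(T_{i},x_{i}),x_{i+1})<\delta$, I would form its reversal $y_{i}:=x_{k-i}$ and $S_{i}:=T_{k-1-i}$. This gives $y_{0}=y$, $y_{k}=x$, and $S_{i}\geq T$, leaving only the jump estimates to verify. Writing $x_{k-1-i}=\Psi_{-T_{k-1-i}}(\Psi_{T_{k-1-i}}(x_{k-1-i}))$, one has
\[
d\bigl(\Psi^{*}(S_{i},y_{i}),y_{i+1}\bigr)=d\bigl(\Psi_{-T_{k-1-i}}(x_{k-i}),\,\Psi_{-T_{k-1-i}}(\Psi_{T_{k-1-i}}(x_{k-1-i}))\bigr),
\]
which by continuity of $\Psi_{-T_{k-1-i}}$ at $\Psi_{T_{k-1-i}}(x_{k-1-i})$ is less than $\varepsilon$ whenever the forward jump is smaller than some $\eta_{i}>0$. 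Taking $\eta:=\min_{i}\eta_{i}>0$, any forward chain with $\delta<\eta$ produces a reversal which is a genuine $(\varepsilon,T)$-chain under $\Psi^{*}$.

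The main obstacle is that $\eta$ depends on the specific chain chosen, which itself depends on $\delta$, so demanding $\delta<\eta$ a priori looks circular. I would resolve this by appeal to uniform continuity of the joint map $(t,z)\mapsto\Psi(t,z)$ on compact subsets of $\mathbb{R}\times X$: after fixing an arbitrary initial $(\delta_{0},T)$-chain, the finitely many orbit arcs $\{\Psi_{t}(x_{j}):t\in[0,T_{j}]\}$ lie in a compact set $K\subset X$ and the times lie in $[0,T_{\max}]$ with $T_{\max}:=\max_{j}T_{j}$; uniform continuity on $[-T_{\max},0]\times K'$ for a compact neighborhood $K'$ of $K$ packages the pointwise continuity estimates into a single modulus $\eta>0$. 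A second invocation of $y\in\Omega(x)$ then supplies a finer chain with $\delta<\eta$, whose reversal is the desired $(\varepsilon,T)$-chain from $y$ to $x$ under $\Psi^{*}$.
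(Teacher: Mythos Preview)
Your reduction to one direction via $(\Psi^{*})^{*}=\Psi$ is correct, and you have correctly identified that the na\"ive reversal $y_{i}:=x_{k-i}$ creates a circularity: the modulus $\eta$ depends on the chain, while the chain must be chosen using $\eta$. However, your proposed resolution does not work.

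There are two independent gaps. First, $X$ is merely a metric space; there is no local compactness assumption, so a compact neighbourhood $K'$ of $K$ need not exist. Second, and more fundamentally, even if such a $K'$ existed, the estimate you extract is a modulus $\eta$ for $\Psi$ on $[-T_{\max},0]\times K'$, where $T_{\max}$ and $K'$ come from the \emph{first} chain. The \emph{second} chain, chosen with jumps $<\eta$, has its own points $x_{j}'$ and its own times $T_{j}'$; there is no reason for $\Psi_{T_{j}'}(x_{j}')$ to lie in $K'$ or for $T_{j}'$ to be bounded by $T_{\max}$ (recall that chain times are only bounded \emph{below}). So the uniform-continuity bound you obtained simply does not apply to the reversal of the second chain, and the circularity is not broken.

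The paper avoids this entirely by a change of viewpoint: instead of taking the backward chain points to be the forward starting points $x_{j}$, take them to be the forward \emph{landing} points $\Psi_{T_{j}}(x_{j})$. Then $\Psi^{*}_{T_{j}}(\Psi_{T_{j}}(x_{j}))=x_{j}$ exactly, and the jump to the next backward point $\Psi_{T_{j-1}}(x_{j-1})$ is precisely the original forward jump $d(\Psi_{T_{j-1}}(x_{j-1}),x_{j})<\delta$. No continuity estimate is needed for any of the intermediate steps. The only place continuity enters is the very first backward step out of $y$: here one takes a $(\delta,2T)$-chain, splits the final forward segment so that the first backward segment has the \emph{fixed} length $T$, and uses continuity of the single map $\Psi_{-T}$ at the single fixed point $y$ to choose $\delta$ before selecting the chain. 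This breaks the circularity cleanly and requires no compactness whatsoever.
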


\begin{proof}
Let $y\in\Omega(x)$ and $\varepsilon,T>0$. The difficulty in the proof lies in
the fact, that in the last step of an $(\varepsilon,T)$-chain from $x$ to $y$
there is a jump to $y$. By continuity of the flow there is $\delta
\in(0,\varepsilon)$ such that $d(x,z)<\delta$ implies $d(\Psi(-T,x),\Psi
(-T,z))<\varepsilon$. Consider a $(\delta,2T)$ chain from $x$ to $y$ given by
$T_{0},\ldots,T_{k-1}\geq2T,x=x_{0},x_{1},\ldots,x_{k}=y$. We construct an
$(\varepsilon,T)$-chain from $y$ to $x$ for the time reversed flow $\Psi
^{\ast}$ in the following way:

Let $T_{0}^{\ast}=T,T_{1}^{\ast}=T_{k-1}-T\geq T,T_{2}^{\ast}=T_{k-2}%
,\ldots,T_{k}^{\ast}=T_{0}$ and%
\[
x_{0}^{\ast}=y,\,x_{1}^{\ast}=\Psi_{-T}\left(  \Psi_{T_{k-1}}(x_{k-1})\right)
,\,x_{2}^{\ast}=\Psi_{T_{k-2}}(x_{k-2}),\ldots,\,x_{k}^{\ast}=\Psi_{T_{0}%
}(x_{0}),\,x_{k+1}^{\ast}=x.
\]
Since $d(y,\Psi_{T_{k-1}}(x_{k-1}))<\delta$ it follows that
\begin{align*}
d\left(  \Psi_{T_{0}^{\ast}}^{\ast}(x_{0}^{\ast}),x_{1}^{\ast}\right)   &
=d\left(  \Psi_{-T}(y),\Psi_{-T}\left(  \Psi_{T_{k-1}}(x_{k-1})\right)
\right)  <\varepsilon,\\
d\left(  \Psi_{T_{i}^{\ast}}^{\ast}(x_{i}^{\ast}),x_{i+1}^{\ast}\right)   &
=d\left(  x_{k-i},\Psi_{T_{k-i-1}}(x_{k-i-1})\right)  <\varepsilon\text{ for
}i\in\{1,\ldots,k-1\},\\
d\left(  \Psi_{T_{k}^{\ast}}^{\ast}(x_{k}^{\ast}),x_{k+1}^{\ast}\right)   &
=d\left(  \Psi_{-T_{0}}(\Psi_{T_{0}}(x_{0})),x_{0}\right)  =0.
\end{align*}
Thus $x\in\Omega^{\ast}(y)$. The other implication follows analogously since
$\Psi=(\Psi^{\ast})^{\ast}$.
\end{proof}

\begin{proposition}
\label{Proposition_flow_inv}(i) For a flow $\Psi$ on $X$ the chain limit sets
$\Omega(x)$ and $\Omega^{\ast}(x),x\in X$, are invariant.

(ii) Any maximal chain transitive set $Y$ of $\Psi$ is invariant and coincides
with $\Omega(x)\cap\Omega^{\ast}(x)$ for all $x\in Y$.
\end{proposition}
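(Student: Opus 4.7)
My plan is to handle the two parts in sequence, with (i) providing the key time-shift technique that is reused in (ii).

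For part (i), to show $\Omega(x)$ is invariant, I would fix $y\in\Omega(x)$ and $t\in\mathbb{R}$, and construct, for arbitrary $\varepsilon,T>0$, an $(\varepsilon,T)$-chain from $x$ to $\Psi(t,y)$ by modifying the last step of a suitable chain from $x$ to $y$. Concretely, I would first pick $\delta\in(0,\varepsilon)$ using continuity of $\Psi(t,\cdot)$ so that $d(z,y)<\delta$ implies $d(\Psi(t,z),\Psi(t,y))<\varepsilon$, then take a $(\delta,T')$-chain $x=x_0,\dots,x_k=y$ with $T'=T+|t|$ and times $T_0,\dots,T_{k-1}\geq T'$. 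Replacing the last jump point $x_k=y$ by $\Psi(t,y)$ and the last time $T_{k-1}$ by $T_{k-1}+t\geq T$ yields the desired $(\varepsilon,T)$-chain, since $d(\Psi(T_{k-1}+t,x_{k-1}),\Psi(t,y))=d(\Psi(t,\Psi(T_{k-1},x_{k-1})),\Psi(t,y))<\varepsilon$. The same argument applied to the reversed flow handles $\Omega^{\ast}(x)$.

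For the invariance statement in part (ii), I would fix $y\in Y$ and $t\in\mathbb{R}$ and argue that $Y\cup\{\Psi(t,y)\}$ is chain transitive, which forces $\Psi(t,y)\in Y$ by maximality. Chains between points of $Y$ already exist, and chains from $z\in Y$ to $\Psi(t,y)$ come from the construction in (i). For chains from $\Psi(t,y)$ to $z\in Y$, I would perform the symmetric modification on the \emph{first} step of a chain from $y$ to $z$: start with a $(\delta,T+|t|)$-chain $y=x_0,\dots,x_k=z$, replace $x_0$ by $\Psi(t,y)$ and $T_0$ by $T_0-t\geq T$, and verify $d(\Psi(T_0-t,\Psi(t,y)),x_1)=d(\Psi(T_0,y),x_1)<\varepsilon$. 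A chain from $\Psi(t,y)$ to itself is obtained by concatenation.

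For the equality $Y=\Omega(x)\cap\Omega^{\ast}(x)$ with $x\in Y$, the inclusion $Y\subseteq\Omega(x)\cap\Omega^{\ast}(x)$ is immediate from chain transitivity of $Y$ together with Proposition \ref{LemCLSAtReverseTime} (which identifies $y\in\Omega^{\ast}(x)$ with $x\in\Omega(y)$). For the reverse inclusion, given $y\in\Omega(x)\cap\Omega^{\ast}(x)$, I would verify that $Y\cup\{y\}$ is chain transitive by routine concatenation: chains $z\to y$ factor through $x$ via $z\to x$ (inside $Y$) followed by $x\to y$ (since $y\in\Omega(x)$); chains $y\to z$ factor through $x$ via $y\to x$ (since $x\in\Omega(y)$, equivalently $y\in\Omega^{\ast}(x)$) followed by $x\to z$. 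Maximality of $Y$ then forces $y\in Y$.

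The main obstacle is the bookkeeping in the chain modification: ensuring that after shifting a single step by $t$ the resulting time remains $\geq T$, which dictates choosing the initial chain with the enlarged time threshold $T+|t|$, and choosing $\delta$ small enough by uniform-on-bounded-time continuity of $\Psi(t,\cdot)$. Everything else reduces to concatenation of chains and invocation of maximality.
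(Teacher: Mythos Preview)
Your proof is correct and, for part (i) and the equality in part (ii), essentially identical to the paper's argument: the same last-step time-shift with a continuity-based choice of $\delta$, and the same concatenation through $x$ using Proposition~\ref{LemCLSAtReverseTime}.

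The one organizational difference is in how invariance of $Y$ in (ii) is obtained. You prove it directly by showing $Y\cup\{\Psi_t(y)\}$ is chain transitive, which requires the additional ``first-step modification'' argument. The paper instead first establishes the equality $Y=\Omega(x)\cap\Omega^{\ast}(x)$ and then simply notes that invariance of $Y$ is immediate from (i), since $\Omega(x)$ and $\Omega^{\ast}(x)$ are each invariant and hence so is their intersection. This ordering is slightly more economical and lets you drop the separate first-step argument entirely. Your route is perfectly valid, just a bit more work than necessary.
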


\begin{proof}
(i) By Proposition \ref{LemCLSAtReverseTime} it suffices to prove that
$\Psi_{t}(y)\in\Omega(x)$ for all $t\in\mathbb{R}$ and all $y\in\Omega(x)$.
Fix $t\in\mathbb{R}$ and let $\varepsilon,T>0$. By continuity there is
$\delta>0$ such that
\[
d(z,y)<\delta\text{ implies }d\left(  \Psi_{t}(z),\Psi_{t}(y)\right)
<\varepsilon.
\]
Pick a $(\delta,S)$ chain from $x$ to $y$ with $S+t\geq T$. Then the final
piece of this chain satisfies $d\left(  \Psi_{T_{k-1}}(x_{k-1}),y\right)
<\delta$ implying
\[
d\left(  \Psi_{t+T_{k-1}}(x_{k-1}),\Psi_{t}(y)\right)  =d\left(  \Psi_{t}%
(\Psi_{T_{k-1}}(x_{k-1})),\Psi_{t}(y)\right)  <\varepsilon.
\]
Hence we obtain an $(\varepsilon,T)$-chain from $x$ to $\Psi_{t}(y)$.

(ii) By (i) it suffices to prove that $Y=\Omega(x)\cap\Omega^{\ast}(x)$ for
all $x\in Y$. Let $y\in Y$. Then $y\in\Omega(x)$ and $x\in\Omega(y)$, hence by
Proposition \ref{LemCLSAtReverseTime} it follows that $y\in\Omega^{\ast}(x)$.
For the converse inclusion note that any point $y\in\Omega(x)\cap\Omega^{\ast
}(x)$ satisfies $y\in\Omega(x)$ and $x\in\Omega(y)$, hence the chain
transitive set $\Omega(x)\cap\Omega^{\ast}(x)$ is contained in the maximal
chain transitive set containing $x$.
\end{proof}

An attractor is a compact invariant set $\mathcal{A}$ such that $\mathcal{A}%
\subset\mathrm{int}N$ for a set $N$ with%
\begin{equation}
\mathcal{A}=\omega(N):=\left\{  y\in X\left\vert \exists t_{k}\rightarrow
\infty~\exists x_{k}\in N:\Psi_{t_{k}}(x_{k})\rightarrow y\right.  \right\}  .
\label{A}%
\end{equation}
The complementary repeller is $\mathcal{A}^{\ast}:=\left\{  y\in X\left\vert
\omega(x)\cap\mathcal{A}=\varnothing\right.  \right\}  $. It is also compact
invariant and has the property that $\mathcal{A}^{\ast}\subset\mathrm{int}%
N^{\ast}$ for a set $N^{\ast}$ with%
\begin{equation}
\mathcal{A}^{\ast}=\omega^{\ast}(N^{\ast}):=\left\{  y\in X\left\vert \exists
t_{k}\rightarrow-\infty~\exists x_{k}\in N:\Psi_{t_{k}}(x_{k})\rightarrow
y\right.  \right\}  . \label{repeller}%
\end{equation}
The relation to the chain recurrent set is given by the following theorem (cf.
\cite[Theorem B.2.26]{ColK00}).

\begin{theorem}
\label{Theorem_attractor}For a flow on a compact metric space $X$ the chain
recurrent set coincides with $\bigcap\left(  \mathcal{A}\cup\mathcal{A}^{\ast
}\right)  $, where the intersection is taken over all attractors $\mathcal{A}$.
\end{theorem}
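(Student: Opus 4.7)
The plan is to prove the two inclusions separately. For the inclusion ``chain recurrent $\subseteq\bigcap(\mathcal{A}\cup\mathcal{A}^{\ast})$'', I would assume $x$ is chain recurrent and $x\notin\mathcal{A}$, then show $\omega(x)\cap\mathcal{A}=\varnothing$ by contradiction. Starting from the fundamental neighborhood $N$ in (\ref{A}), a compactness argument produces a smaller trapping neighborhood $N_{0}\supset\mathcal{A}$ with $\Psi_{t}(\overline{N_{0}})\subset N_{0}$ for $t\geq 1$ and $\Psi_{t}(\overline{N_{0}})\subset B(\mathcal{A},\eta)$ once $t\geq T_{\eta}$, for any prescribed $\eta>0$. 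Pick $\eta$ so small that $B(\mathcal{A},\eta)\subset N_{0}$ and $d(x,\mathcal{A})>\eta$. If $\omega(x)$ met $\mathcal{A}$, the forward orbit of $x$ would enter $N_{0}$ and eventually lie in $B(\mathcal{A},\eta/2)$; choosing the first dwelling time $T_{0}\geq T_{\eta}$ of an $(\eta/2,T_{\eta})$-chain $x_{0}=x,x_{1},\ldots,x_{k}=x$ supplied by chain recurrence large enough, one has $x_{1}\in B(\mathcal{A},\eta)\subset N_{0}$, and an easy induction using the trapping property forces $x_{i}\in B(\mathcal{A},\eta)$ for all $i\geq 1$. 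Taking $i=k$ yields $x\in B(\mathcal{A},\eta)$, contradicting $d(x,\mathcal{A})>\eta$.

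For the reverse inclusion I would argue by contraposition: given a point $x$ that is not chain recurrent, I construct a single attractor $\mathcal{A}$ with $x\notin\mathcal{A}\cup\mathcal{A}^{\ast}$. Fix $\varepsilon_{0},T_{0}>0$ admitting no $(\varepsilon_{0},T_{0})$-chain from $x$ to itself, and set
\[
U:=\{y\in X:\exists\,(\varepsilon_{0}/3,T_{0})\text{-chain from }x\text{ to }y\}.
\]
Appending one more jump to a chain ending at $y\in U$ shows the ``fat trapping'' property $B(\Psi_{t}(y),\varepsilon_{0}/3)\subset U$ for every $t\geq T_{0}$; in particular $\Psi_{t}(x)\in U$ for $t\geq T_{0}$, so $\omega(x)\subset\omega(U)$. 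A triangle-inequality splicing shows $B(x,\varepsilon_{0}/3)\cap\overline{U}=\varnothing$, else the last jump of a chain $x\rightsquigarrow y\in U$ with $d(y,x)<\varepsilon_{0}/3$ could be redirected to $x$ to produce a forbidden $(\varepsilon_{0},T_{0})$-chain from $x$ to $x$. Taking $N:=U$ and $\mathcal{A}:=\omega(N)$, the set $\mathcal{A}$ is compact and invariant, lies in $\overline{U}$ (so $x\notin\mathcal{A}$), and contains the nonempty set $\omega(x)$ (so $x\notin\mathcal{A}^{\ast}$).

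The main obstacle I anticipate is verifying that $\mathcal{A}=\omega(N)$ actually qualifies as an attractor in the sense of (\ref{A}), namely $\mathcal{A}\subset\mathrm{int}\,N$. This is where the fat trapping property pays off: if $p\in\omega(U)$ is written as $p=\lim\Psi_{s_{n}}(y_{n})$ with $y_{n}\in U$ and $s_{n}\to\infty$, then for $n$ so large that $s_{n}\geq T_{0}$ and $d(\Psi_{s_{n}}(y_{n}),p)<\varepsilon_{0}/6$ one has $B(p,\varepsilon_{0}/6)\subset B(\Psi_{s_{n}}(y_{n}),\varepsilon_{0}/3)\subset U$, giving $p\in\mathrm{int}\,U$ as required. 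Combining both inclusions yields Theorem~\ref{Theorem_attractor}.
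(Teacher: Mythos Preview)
The paper does not prove Theorem~\ref{Theorem_attractor}; it simply cites \cite[Theorem B.2.26]{ColK00}. Your proposal supplies a self-contained proof along the classical Conley lines, and the overall strategy---trapping regions for one inclusion, the ``chain-shadow'' set $U$ for the other---is sound.

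There is one small slip in the induction for the first inclusion. You arrange $\Psi_{t}(\overline{N_{0}})\subset B(\mathcal{A},\eta)$ for $t\geq T_{\eta}$ and then run an $(\eta/2,T_{\eta})$-chain. From $x_{i}\in B(\mathcal{A},\eta)\subset N_{0}$ and $T_{i}\geq T_{\eta}$ you only get $\Psi_{T_{i}}(x_{i})\in B(\mathcal{A},\eta)$, so after the $\eta/2$-jump $x_{i+1}\in B(\mathcal{A},3\eta/2)$, and the induction does not close. The fix is immediate: take the chain with minimum time $T_{\eta/2}$ (or, equivalently, work with $\Psi_{t}(\overline{N_{0}})\subset B(\mathcal{A},\eta/2)$ for $t\geq T_{\eta/2}$); then $\Psi_{T_{i}}(x_{i})\in B(\mathcal{A},\eta/2)$ and $x_{i+1}\in B(\mathcal{A},\eta)$ as desired.

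The second inclusion is clean. It is worth recording explicitly that $U$ is open (any $y\in U$ is the endpoint of a chain whose last jump is strictly less than $\varepsilon_{0}/3$, so nearby points are endpoints too); this makes $N=U$ a legitimate candidate for the neighborhood in (\ref{A}), and your argument that $\omega(U)\subset\mathrm{int}\,U$ via the fat trapping property then confirms $\mathcal{A}=\omega(U)$ is an attractor.
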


A related concept are Morse decompositions introduced next. Note first that a
compact subset $K\subset X$ is called isolated invariant for $\Psi$, if
$\Psi_{t}(x)\in K$ for all $x\in K$ and all $t\in\mathbb{R}$ and there exists
a set $N$ with $K\subset\mathrm{int}\,N$ such that $\Psi_{t}(x)\in N$ for all
$t\in\mathbb{R}$ implies $x\in K$.

\begin{definition}
\label{defMorsedecomp}A Morse decomposition of a flow $\Psi$ on a compact
metric space $X$ is a finite collection $\left\{  \mathcal{M}_{i}\left\vert
i=1,\ldots,\ell\right.  \right\}  $ of nonvoid, pairwise disjoint, and compact
isolated invariant sets such that

(i) for all $x\in X$ the limit sets satisfy $\omega(x),\,\alpha(x)\subset
\bigcup_{i=1}^{\ell}\mathcal{M}_{i}$, and

(ii) suppose there are $\mathcal{M}_{j_{0}},\mathcal{M}_{j_{1}},\ldots
,\mathcal{M}_{j_{k}\text{ }}$ and $x_{1},\ldots,x_{k}\in X\setminus
\bigcup_{i=1}^{\ell}\mathcal{M}_{i}$ with $\alpha(x_{i})\subset\mathcal{M}%
_{j_{i-1}}$ and $\omega(x_{i})\subset\mathcal{M}_{j_{i}}$ for $i=1,\ldots,k$;
then $\mathcal{M}_{j_{0}}\neq\mathcal{M}_{j_{k}}$.

The elements of a Morse decomposition are called Morse sets. A Morse
decomposition $\left\{  \mathcal{M}_{1},\ldots,\mathcal{M}_{k}\right\}  $ is
called \textit{finer} than a Morse decomposition $\left\{  \mathcal{M}%
_{1}^{\prime},\ldots,\mathcal{M}_{k^{\prime}}^{\prime}\right\}  $, if for all
$j\in\left\{  1,\ldots,k^{\prime}\right\}  $ there is $i\in\left\{
1,\ldots,k\right\}  $ with $\mathcal{M}_{i}\subset\mathcal{M}_{j}^{\prime}$.
\end{definition}

An order is defined by the relation $\mathcal{M}_{i}\preceq\mathcal{M}_{j}$ if
there are indices $j_{0},\ldots,j_{k}$ with $\mathcal{M}_{i}=\mathcal{M}%
_{j_{0}},\mathcal{M}_{j}=\mathcal{M}_{j_{k}}$ and points $x_{j_{i}}\in X$
with
\[
\alpha(x_{j_{i}})\subset\mathcal{M}_{j_{i}-1}\mbox{ and }\omega(x_{j_{i}%
})\subset\mathcal{M}_{j_{i}}\mbox{ for }i=1,\ldots,k.
\]

The following theorem relates chain recurrent components and Morse
decompositions; cf. \cite[Theorem 8.3.3]{ColK14}.

\begin{theorem}
For a flow on a compact metric space there exists a finest Morse decomposition
if and only if the chain recurrent set has only finitely many connected
components. Then the Morse sets coincide with the chain recurrent components.
\end{theorem}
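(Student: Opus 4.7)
The plan is to prove both directions by exploiting the fundamental relationship that, on a compact metric space, every Morse set in any Morse decomposition must contain at least one chain recurrent component, while every chain recurrent component is contained in some Morse set. This relationship then forces the finest Morse decomposition (when it exists) to be exactly the decomposition into chain recurrent components.

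For the forward direction, I would assume a finest Morse decomposition $\{\mathcal{M}_1,\ldots,\mathcal{M}_\ell\}$ exists. The key step is to argue that each $\mathcal{M}_i$ is itself chain transitive (hence a single chain recurrent component): if some $\mathcal{M}_i$ were not chain transitive, one could refine it using the chain recurrent components inside it, contradicting that the decomposition is the finest. More concretely, I would use Theorem~\ref{Theorem_attractor} and the attractor--repeller correspondence with Morse decompositions to split a non-chain-transitive Morse set into smaller isolated invariant sets by restricting an attractor of the ambient flow, producing a strictly finer Morse decomposition. Since each $\mathcal{M}_i$ is then a chain recurrent component and the decomposition is finite, the chain recurrent set has finitely many components.

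For the converse, assume the chain recurrent set $\mathcal{R}$ has only finitely many connected components $\mathcal{C}_1,\ldots,\mathcal{C}_\ell$. I would verify that $\{\mathcal{C}_1,\ldots,\mathcal{C}_\ell\}$ satisfies Definition~\ref{defMorsedecomp}: each $\mathcal{C}_i$ is compact (as a closed subset of $X$ since $\mathcal{R}$ is closed by Theorem~\ref{Theorem_attractor}), invariant (limit sets of points in $\mathcal{C}_i$ lie in $\mathcal{R}$, and by chain transitivity of each component, in $\mathcal{C}_i$ itself), and isolated invariant (using that distinct components are separated and exploiting local isolating neighborhoods provided by the attractor characterization). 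Condition~(i) of the definition follows because $\omega$- and $\alpha$-limit sets always lie in $\mathcal{R}$; condition~(ii) (no cycles) follows because a nontrivial cycle of connecting orbits would create a new chain transitive set meeting several components, contradicting maximality of chain recurrent components. Finally, to see this decomposition is the finest, I would observe that any Morse set of any other Morse decomposition must contain a chain recurrent component, so $\{\mathcal{C}_1,\ldots,\mathcal{C}_\ell\}$ refines every Morse decomposition.

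The main obstacle I anticipate is establishing that the chain recurrent components are isolated invariant when there are only finitely many: this requires the finiteness to produce positive distances between components together with a construction of an isolating neighborhood, and is where compactness of $X$ and Theorem~\ref{Theorem_attractor} (expressing $\mathcal{R}$ as an intersection of $\mathcal{A}\cup\mathcal{A}^{\ast}$ over all attractors) play the decisive role. The absence-of-cycles condition and the minimality argument showing $\{\mathcal{C}_i\}$ is the finest decomposition are comparatively routine once isolation is in hand.
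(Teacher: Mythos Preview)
The paper does not prove this theorem at all: it is stated as a known result with the reference ``cf.\ \cite[Theorem 8.3.3]{ColK14}'' and no argument is given. So there is no proof in the paper to compare your proposal against.

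That said, your sketch follows the standard route used in the cited textbook and in Alongi--Nelson: every Morse decomposition arises from a finite chain of attractor--repeller pairs, and conversely the chain recurrent components, being intersections of attractors and complementary repellers by Theorem~\ref{Theorem_attractor}, yield the finest such chain when there are only finitely many of them. Your identification of the isolated-invariance verification as the main technical point is accurate; once you know each chain recurrent component $\mathcal{C}_i$ can be written as $\mathcal{A}\cap\mathcal{A}'^{\ast}$ for suitable attractors $\mathcal{A},\mathcal{A}'$ (which is how the standard proof proceeds), the isolating neighborhood comes for free. One minor caution: in the forward direction you should be careful that refining a single Morse set $\mathcal{M}_i$ by an internal attractor--repeller pair really produces a Morse decomposition of the whole space $X$, not just of $\mathcal{M}_i$; this requires combining the attractor inside $\mathcal{M}_i$ with the ambient attractor structure, which is exactly where the attractor filtration formalism is needed.
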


\begin{remark}
\label{Remark_Hurley}For continuous maps $f$ on locally compact metric spaces
$X$, Hurley \cite{Hurley92} proposes to modify the definition of chain
recurrence by considering instead of a constant $\varepsilon>0$ continuous
functions $\varepsilon:X\rightarrow(0,\infty)$. A strong $\varepsilon(\cdot
)$-chain consists of $x_{0},x_{1},\ldots,x_{k}$ with $d(f(x_{j}),x_{j+1}%
)<\varepsilon(f(x_{j}))$ for all $j$ and a point $x\in X$ is strongly chain
recurrent if for each such function $\varepsilon(\cdot)$ there is a strong
$\varepsilon(\cdot)$-chain from $x$ to $x$. This approach does not appear
appropriate in our context, mainly due to the fact that we will use the
compactification provided by the Poincar\'{e} sphere $\mathbb{P}^{n}$.
\end{remark}

We will consider vector bundles $\mathcal{V}=B\times\mathbb{R}^{n}$, where $B$
is a compact metric base space. A linear flow $\Psi=(\theta,\psi)$ on
$B\times\mathbb{R}^{n}$ is a flow of the form%
\[
\Psi:\mathbb{R}\times B\times\mathbb{R}^{n}\rightarrow B\times\mathbb{R}%
^{n},\,\Psi_{t}(b,x)=(\theta_{t}b,\psi(t,b,x))\text{ for }(t,b,x)\in
\mathbb{R}\times B\times\mathbb{R}^{n},
\]
where $\theta$ is a flow on the base space $B$ and $\psi(t,b,x)$ is linear in
$x$, i.e. $\psi(t,b,\alpha_{1}x_{1}+\alpha_{2}x_{2})=\alpha_{1}\psi
(t,b,x_{1})+\alpha_{2}\psi(t,b,x_{2})$ for $\alpha_{1},\alpha_{2}\in
\mathbb{R}$ and $x_{1},x_{2}\in\mathbb{R}^{n}$. A closed subset $\mathcal{V}$
of $B\times\mathbb{R}^{n}$ that intersects each fiber $\{b\}\times
\mathbb{R}^{n},b\in B$, in a linear subspace of constant dimension is a
subbundle. Denote the projection $\mathbb{R}^{n}\setminus\{0\}\rightarrow
\mathbb{P}^{n-1}$ as well as the corresponding map $B\times(\mathbb{R}%
^{n}\setminus\{0\})\rightarrow B\times\mathbb{P}^{n-1}$ by the letter
$\mathbb{P}$. A linear flow $\Psi$ induces a flow $\mathbb{P}\Psi$ on the
projective bundle $B\times\mathbb{P}^{n-1}$, which is a compact metric space;
a metric on $\mathbb{P}^{n-1}$ can be obtained by defining, for elements
$p_{1}=\mathbb{P}x,p_{2}=\mathbb{P}y$,%
\begin{equation}
d(p_{1},p_{2})=\min\left\{  \left\Vert \frac{x}{\left\Vert x\right\Vert
}-\frac{y}{\left\Vert y\right\Vert }\right\Vert ,\left\Vert \frac
{x}{\left\Vert x\right\Vert }+\frac{y}{\left\Vert y\right\Vert }\right\Vert
\right\}  , \label{metric_P}%
\end{equation}
and a metric on $B\times\mathbb{P}^{n-1}$ is defined by taking the maximum of
the distances in $B$ and $\mathbb{P}^{n-1}$.

For a linear flow $\Psi$ two nontrivial invariant subbundles $(\mathcal{V}%
^{+},\mathcal{V}^{-})$ with $B\times\mathbb{R}^{n}=\mathcal{V}^{+}%
\oplus\mathcal{V}^{-}$ (a Whitney sum) are exponentially separated if there
are $c,\mu>0$ with%
\begin{equation}
\left\Vert \Psi(t,b,x^{+})\right\Vert \leq ce^{-\mu t}\left\Vert
\Psi(t,b,x^{-})\right\Vert ,t\geq0,\text{ for }(b,x^{\pm})\in\mathcal{V}^{\pm
},\,\left\Vert x^{+}\right\Vert =\left\Vert x^{-}\right\Vert . \label{exp_sep}%
\end{equation}
The following is Selgrade's theorem for linear flows; cf. \cite[Theorem
9.2.5]{ColK14}, and \cite[Theorem 5.1.4]{ColK00} for the result on exponential separation.

\begin{theorem}
\label{Theorem_Selgrade}Let $\Psi=(\theta,\psi):\mathbb{R}\times
B\times\mathbb{R}^{n}\rightarrow B\times\mathbb{R}^{n}$ be a linear flow on
the vector bundle $B\times\mathbb{R}^{n}$ with chain transitive flow $\theta$
on the base space $B$.\ Then the projected flow $\mathbb{P}\Psi$ on
$B\times\mathbb{P}^{n-1}$ has the linearly ordered chain recurrent components
$\mathcal{M}_{1}\preceq\cdots\preceq\mathcal{M}_{\ell},1\leq\ell\leq n$. These
components form the finest Morse decomposition for $\mathbb{P}\Psi$. The lifts
of the Morse sets $\mathcal{M}_{i}$
\[
\mathcal{V}_{i}=\mathbb{P}^{-1}\mathcal{M}_{i}:=\left\{  (b,x)\in
B\times\mathbb{R}^{n}\left\vert x\not =0\Rightarrow(b,\mathbb{P}%
x)\in\mathcal{M}_{i}\right.  \right\}  ,
\]
are subbundles, called the Selgrade bundles. They form the decomposition
$B\times\mathbb{R}^{n}=\mathcal{V}_{1}\oplus\cdots\oplus\mathcal{V}_{\ell}$.
This Selgrade decomposition is the finest decomposition into exponentially
separated subbundles: For any pair of exponentially separated subbundles
$(\mathcal{V}^{+},\mathcal{V}^{-})$ there is $1\leq j<\ell$ with
\[
\mathcal{V}^{+}=\mathcal{V}_{1}\oplus\cdots\oplus\mathcal{V}_{j}%
\;\text{and\ }\mathcal{V}^{-}=\mathcal{V}_{j+1}\oplus\cdots\oplus
\mathcal{V}_{\ell}\,.
\]
Conversely, subbundles $\mathcal{V}^{+}$ and $\mathcal{V}^{-}$ defined in this
way are exponentially separated.
\end{theorem}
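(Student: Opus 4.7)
The plan is to combine the topological theory of flows on compact metric spaces, especially Theorem \ref{Theorem_attractor}, with the linear structure of the fibers. Since $B$ is compact metric and $\mathbb{P}^{n-1}$ is compact, the projective bundle $B\times\mathbb{P}^{n-1}$ is compact metric, so all of the attractor and Morse machinery applies directly to $\mathbb{P}\Psi$. First I would show that every attractor $\mathcal{A}$ of $\mathbb{P}\Psi$ lifts to an invariant linear subbundle $\widehat{\mathcal{A}}=\mathbb{P}^{-1}\mathcal{A}\cup(B\times\{0\})$. Arguing fiberwise, one uses the fiberwise linearity of $\psi(t,b,\cdot)$, the invariance of $\mathcal{A}$, and a neighborhood $N$ attracting to $\mathcal{A}$ to force each fiber $\widehat{\mathcal{A}}_b$ to be a linear subspace of $\mathbb{R}^n$. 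Constancy of $\dim\widehat{\mathcal{A}}_b$ across $b\in B$ then follows from chain transitivity of $\theta$ combined with upper semicontinuity of the fiber dimension along orbits.

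The complementary repeller $\mathcal{A}^{\ast}$ similarly lifts to a subbundle $\widehat{\mathcal{A}^{\ast}}$ that is disjoint from $\widehat{\mathcal{A}}$ away from the zero section, so dimension counting yields a Whitney sum $\widehat{\mathcal{A}}\oplus\widehat{\mathcal{A}^{\ast}}=B\times\mathbb{R}^n$. Attractors of $\mathbb{P}\Psi$ are therefore in bijection with invariant subbundles admitting an invariant complement, and since attractors form a totally ordered lattice in the general theory, their lifts form a totally ordered filtration $0\subset\widehat{\mathcal{A}_1}\subset\cdots\subset B\times\mathbb{R}^n$ with at most $n+1$ terms. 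Theorem \ref{Theorem_attractor} then implies that the chain recurrent set of $\mathbb{P}\Psi$ has only finitely many connected components, which by the theorem following Definition \ref{defMorsedecomp} coincide with the Morse sets of a finest Morse decomposition $\mathcal{M}_1\preceq\cdots\preceq\mathcal{M}_{\ell}$. The consecutive differences of the attractor filtration are precisely the Selgrade bundles $\mathcal{V}_i$, giving both the Whitney decomposition and the linear order.

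Exponential separation of $\mathcal{V}_1\oplus\cdots\oplus\mathcal{V}_j$ from $\mathcal{V}_{j+1}\oplus\cdots\oplus\mathcal{V}_{\ell}$ comes from the uniform attraction rate on the compact projective attractor $\mathbb{P}(\mathcal{V}_1\oplus\cdots\oplus\mathcal{V}_j)$: via the homogeneity of the linear flow and the metric \eqref{metric_P}, this rate converts to an exponential estimate of the form \eqref{exp_sep}. Conversely, given any exponentially separated pair $(\mathcal{V}^+,\mathcal{V}^-)$, the estimate \eqref{exp_sep} forces $\mathbb{P}\mathcal{V}^+$ to be an attractor of $\mathbb{P}\Psi$, which by the bijection above must coincide with some initial segment of the Selgrade filtration, yielding the finest-decomposition property. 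The main obstacle will be the first step, namely showing that each attractor fiber $\widehat{\mathcal{A}}_b$ is a linear subspace of constant dimension: linear subspace structure is not preserved under naive limits on $\mathbb{P}^{n-1}$, so one must exploit uniform contraction toward $\mathcal{A}$ together with fiberwise linearity of $\Psi$ to propagate linear combinations of attracted vectors back into the lift.
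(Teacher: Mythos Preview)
The paper does not prove this theorem; it is stated in the preliminaries with the citation ``cf.\ \cite[Theorem 9.2.5]{ColK14}, and \cite[Theorem 5.1.4]{ColK00} for the result on exponential separation.'' So there is no in-paper proof to compare against; your outline is in fact close to the standard argument in those references (attractors of $\mathbb{P}\Psi$ lift to subbundles, complementary repellers give Whitney complements, and the resulting filtration yields the Morse decomposition and exponential separation).

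That said, there is a genuine gap in your sketch. You write ``since attractors form a totally ordered lattice in the general theory, their lifts form a totally ordered filtration.'' This is false as stated: for a general flow on a compact metric space the attractors form a lattice (under intersection and the $\omega$-limit of the union), but that lattice is \emph{not} totally ordered in general. The total ordering of attractors of $\mathbb{P}\Psi$ is one of the nontrivial content points of Selgrade's theorem and must be deduced from the linear structure, not assumed. Concretely, once you know that every attractor $\mathcal{A}$ lifts to a subbundle $\widehat{\mathcal{A}}$ with complementary subbundle $\widehat{\mathcal{A}^{\ast}}$, you still need to argue that for two attractors $\mathcal{A}_{1},\mathcal{A}_{2}$ one has $\mathcal{A}_{1}\subset\mathcal{A}_{2}$ or $\mathcal{A}_{2}\subset\mathcal{A}_{1}$. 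The usual route is to show that $\widehat{\mathcal{A}_{1}\cap\mathcal{A}_{2}}=\widehat{\mathcal{A}_{1}}\cap\widehat{\mathcal{A}_{2}}$ and that the smallest attractor containing $\mathcal{A}_{1}\cup\mathcal{A}_{2}$ lifts to $\widehat{\mathcal{A}_{1}}+\widehat{\mathcal{A}_{2}}$, and then use a fiberwise dimension count together with the attractor--repeller complementarity to force nesting; alternatively, one first proves the exponential-separation characterization of attractor/repeller pairs and deduces the order from that. Either way, this step uses linearity in an essential way and cannot be replaced by an appeal to ``the general theory.'' Without it, you have no bound $\ell\leq n$, no linear order $\mathcal{M}_{1}\preceq\cdots\preceq\mathcal{M}_{\ell}$, and no guarantee that the finest Morse decomposition even exists.
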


\subsection{Control sets, chain control sets, and control
flows\label{Subsection2.2}}

Consider control-affine systems of the form
\begin{equation}
\dot{x}(t)=X_{0}(x(t))+\sum_{i=1}^{m}u_{i}(t)X_{i}(x(t)),\,u\in\mathcal{U},
\label{control1}%
\end{equation}
where $X_{0},X_{1},\ldots,X_{m}$ are smooth ($C^{\infty}$-)vector fields on a
smooth manifold $M$ and $\mathcal{U}$ is defined by (\ref{U}). We assume that
for every control $u\in\mathcal{U}$ and every initial state $x(0)=x_{0}\in M$
there exists a unique (Carath\'{e}odory) solution $\varphi(t,x_{0}%
,u),t\in\mathbb{R}$.

For $x\in M$ the controllable set $\mathbf{C}(x)$ and the reachable set
$\mathbf{R}(x)$ are defined as
\begin{align*}
\mathbf{C}(x)  &  =\left\{  y\in M\left\vert \exists u\in\mathcal{U}~\exists
T>0:\varphi(T,y,u)=x\right.  \right\}  ,\\
\mathbf{R}(x)  &  =\left\{  y\in M\left\vert \exists u\in\mathcal{U}~\exists
T>0:y=\varphi(T,x,u)\right.  \right\}  ,
\end{align*}
respectively. The accessibility rank condition is defined as%
\begin{equation}
\dim\mathcal{LA}\left\{  X_{0},X_{1},\ldots,X_{m}\right\}  (x)=\dim M\text{
for all }x\in M; \label{ARC}%
\end{equation}
here $\mathcal{LA}\left\{  X_{0},X_{1},\ldots,X_{m}\right\}  (x)$ is the
subspace of the tangent space $T_{x}M$ corresponding to the vector fields,
evaluated in $x$, in the Lie algebra generated by $X_{0},X_{1},\ldots,X_{m}$.
The controllable sets $\mathbf{C}(x)$ are the reachable sets of the time
reversed system%
\begin{equation}
\dot{x}(t)=-X_{0}(x(t))-\sum_{i=1}^{m}u_{i}(t)X_{i}(x(t)),\,u\in\mathcal{U}.
\label{time-reversed}%
\end{equation}
The following definition introduces sets of complete approximate controllability.

\begin{definition}
\label{Definition_control_sets}A nonvoid set $D\subset M$ is called a control
set of system (\ref{control1}) if it has the following properties: (i) for all
$x\in D$ there is a control $u\in\mathcal{U}$ such that $\varphi(t,x,u)\in D$
for all $t\geq0$, (ii) for all $x\in D$ one has $D\subset\overline
{\mathbf{R}(x)}$, and (iii) $D$ is maximal with these properties, that is, if
$D^{\prime}\supset D$ satisfies conditions (i) and (ii), then $D^{\prime}=D$.
\end{definition}

We recall some properties of control sets; cf. Colonius and Kliemann
\cite[Chp. 3]{ColK00}.

\begin{remark}
\label{Remark2.2}If the intersection of two control sets is nonvoid, the
maximality property (iii) implies that they coincide. If (\ref{ARC}) holds
then by \cite[Lemma 3.2.13(i)]{ColK00} $\overline{D}=\overline{\mathrm{int}%
(D)}$ and $D=\mathbf{C}(x)\cap\overline{\mathbf{R}(x)}$ for all $x\in
\mathrm{int}(D)$ and $\mathrm{int}(D)\subset\mathbf{R}(x)$ for all $x\in D$.
\end{remark}

Next we introduce a notion of controllability allowing for (small) jumps
between pieces of trajectories. Here we fix a metric $d$ on $M$.

\begin{definition}
\label{intro2:defchains}Let $x,y\in M$. For $\varepsilon,T>0$ a controlled
$(\varepsilon,T)$\textit{-chain} $\zeta$ from $x$ to $y$ is given by
$k\in\mathbb{N},\ x_{0}=x,x_{1},\ldots,x_{k}=y\in M,\ u_{0},\ldots,u_{k-1}%
\in\mathcal{U}$, and $t_{0},\ldots,t_{k-1}\geq T$ with
\[
d(\varphi(t_{j},x_{j},u_{j}),x_{j+1})<\varepsilon\text{ }\,\text{for\thinspace
all}\,\,\,j=0,\ldots,k-1.
\]
If for every $\varepsilon,T>0$ there is a controlled $(\varepsilon,T)$-chain
from $x$ to $y$, the point $x$ is chain controllable to $y$. The chain
reachable set from $x\ $and the chain controllable set to $x$ are
\begin{align*}
\mathbf{R}^{c}(x)  &  =\left\{  y\in M\left\vert x\text{ is chain controllable
to }y\right.  \right\}  ,\\
\mathbf{C}^{c}(x)  &  =\left\{  y\in M\left\vert y\text{ is chain controllable
to }x\right.  \right\}  ,
\end{align*}
respectively. If a nonvoid set $F\subset M$ has the property that $x$ is chain
controllable to $y$ for all $x,y\in F$, the set $F$ is said to be chain controllable.
\end{definition}

Note that a set $F$ is chain controllable if and only if $F\subset
\mathbf{R}^{c}(x)$ for all $x\in F$. Observe also that chain controllable sets
are a generalized version of limit sets for time tending to infinity. In
analogy to control sets, we define chain control sets as maximal chain
controllable sets.

\begin{definition}
\label{Definition_chain_control}A nonvoid set $E\subset M$ is called a
\textit{chain control set} of system (\ref{control1}) if for all $x,y\in E$
and $\varepsilon,T>0$ there is a controlled $(\varepsilon,T)$-chain from $x$
to $y$, and $E$ is maximal with this property.
\end{definition}

Since the concatenation of two controlled $(\varepsilon,T)$-chains again
yields a controlled $(\varepsilon,T)$-chain, two chain control sets coincide
if their intersection is nonvoid.

\begin{remark}
\label{Remark_invariance}The definition of chain control sets given e.g. in
Colonius and Kliemann \cite{ColK00} requires in addition to the chain
controllability property that for every $x\in E$ there is a control function
$u\in\mathcal{U}$ such that $\varphi(t,x,u)\in E$ for all $t\in\mathbb{R}$.
Theorem \ref{Theorem_control_inv}\textbf{ }will show that this property holds
for every maximal chain controllable set. Thus it is not necessary to impose
this condition. The paper by Li \cite[Proposition 6.8]{Li07} treats control
systems as particular cases of differential inclusions. Hence the shift \ flow
on the space of control functions plays no role. The definition of chain
control sets (cf. \cite[Definition 6.7]{Li07}) also does not require
invariance. The results on chain transitivity properties and Morse
decompositions apply to compact invariant sets of control systems of the form
$\dot{x}=f(x,u)$, where $f(x,U)$ is convex and the control range $U$ is compact.
\end{remark}

We note the following properties of chain controllability.

\begin{proposition}
\label{Proposition_ccs1}(i) For control system (\ref{control1}) the chain
controllable set $\mathbf{C}^{c}(x)$ to $x$ coincides with the chain reachable
set from $x$ for the time reversed system (\ref{time-reversed}).

(ii) Every chain controllable set $F$ is contained in a chain control set.

(iii) The chain controllable sets $\mathbf{C}^{c}(x)$ and the chain reachable
sets $\mathbf{R}^{c}(x)$ are closed.
\end{proposition}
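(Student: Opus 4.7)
For part (i), my plan is to imitate the proof of Proposition \ref{LemCLSAtReverseTime}. The key observation is that for any $u\in\mathcal{U}$ and initial point $z$, the curve $s\mapsto\varphi(-s,z,u)$ is a Carath\'eodory solution of the time-reversed system (\ref{time-reversed}) under the reversed control $\tilde u(s):=u(-s)$. Given $y\in\mathbf{C}^{c}(x)$ and $\varepsilon,T>0$, I would use continuous dependence on initial conditions (uniformly in $u\in\mathcal{U}$, which holds by the Gronwall estimate for (\ref{control1}) with bounded $u$) to pick $\delta\in(0,\varepsilon)$ such that $d(z,x)<\delta$ implies $d(\varphi(-T,z,u),\varphi(-T,x,u))<\varepsilon$ for every $u$. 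A controlled $(\delta,2T)$-chain $y=x_0,x_1,\ldots,x_k=x$ of the forward system, with controls $u_i$ and times $T_i\geq 2T$, would then be reversed piece-by-piece exactly in the pattern of Proposition \ref{LemCLSAtReverseTime}: an auxiliary initial step of length $T$ (compensated by shortening the next piece from $T_{k-1}$ to $T_{k-1}-T$) absorbs the unavoidable jump into $x$ at the forward endpoint, while every other reversed gap is $<\delta<\varepsilon$ by construction. The converse inclusion is symmetric because (\ref{control1}) is the time-reversal of (\ref{time-reversed}).

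For part (ii), a direct argument avoids Zorn's lemma. Let $E$ be the union of all chain controllable subsets of $M$ containing $F$; I claim $E$ is itself chain controllable. Fix some $z\in F$ and take $x,y\in E$: then $x$ lies in some chain controllable $F_1\supset F$ and $y$ in some chain controllable $F_2\supset F$, both of which contain $z$. For any $\varepsilon,T>0$, concatenating a controlled $(\varepsilon,T)$-chain from $x$ to $z$ (available in $F_1$) with one from $z$ to $y$ (available in $F_2$) yields a controlled $(\varepsilon,T)$-chain from $x$ to $y$. By construction $E$ is maximal among chain controllable sets containing $F$, hence a chain control set.

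For part (iii), I would prove that $\mathbf{R}^{c}(x)$ is closed and deduce the closedness of $\mathbf{C}^{c}(x)$ from part (i). Let $y_n\in\mathbf{R}^{c}(x)$ with $y_n\to y$, and fix $\varepsilon,T>0$. Choose $n$ with $d(y_n,y)<\varepsilon/2$ and a controlled $(\varepsilon/2,T)$-chain from $x$ to $y_n$; its final inequality reads $d(\varphi(t_{k-1},x_{k-1},u_{k-1}),y_n)<\varepsilon/2$, so replacing the terminal point $y_n$ by $y$ and applying the triangle inequality produces a controlled $(\varepsilon,T)$-chain from $x$ to $y$, proving $y\in\mathbf{R}^{c}(x)$. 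For $\mathbf{C}^{c}(x)$, part (i) identifies it with the chain reachable set from $x$ of the time-reversed system (\ref{time-reversed}), which has the same general form as (\ref{control1}), so the same replacement argument applies. The main technical difficulty lies entirely in part (i): the reversal is delicate because the forward chain ends with a jump into $x$, which must be absorbed by the single continuity estimate at the start of the reversed chain, exactly as in Proposition \ref{LemCLSAtReverseTime}; parts (ii) and (iii) are then elementary concatenation and triangle-inequality manipulations.
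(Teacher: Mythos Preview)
Your proposal is correct and follows essentially the same approach as the paper: part (i) is handled by adapting the reversal construction of Proposition~\ref{LemCLSAtReverseTime} to controlled chains, part (ii) by taking the union of all chain controllable sets containing $F$ and using concatenation, and part (iii) by the $\varepsilon/2$ triangle-inequality argument on the terminal jump together with (i). Your write-up is in fact more detailed than the paper's (which for (i) simply cites Proposition~\ref{LemCLSAtReverseTime} and for (ii) asserts the union is chain controllable without spelling out the concatenation), and your remark about needing the continuity estimate uniformly in $u\in\mathcal{U}$ is a correct and useful observation.
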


\begin{proof}
(i)\ This follows using the same arguments as for Proposition
\ref{LemCLSAtReverseTime}.

(ii) Define $E$ as the union of all chain controllable sets $F^{\prime}$
containing $F$. Then $E$ is a maximal chain controllable set, hence a chain
control set containing $F$.

(iii) Let $y\in\overline{\mathbf{R}^{c}(x)}$ and fix $\varepsilon,T>0$. There
is $y_{1}\in\mathbf{R}^{c}(x)$ with $d(y,y_{1})<\varepsilon/2$. The last piece
of a controlled $(\varepsilon/2,T)$-chain from $x$ to $y_{1}$ satisfies%
\[
d(\varphi(T_{k-1},x_{k-1},u_{k-1}),y)\leq d(\varphi(T_{k-1},x_{k-1}%
,u_{k-1}),y_{1})+d(y_{1},y)<\varepsilon/2+\varepsilon/2=\varepsilon.
\]
Thus we obtain a controlled $(\varepsilon,T)$-chain from $x$ to $y$, and hence
$\mathbf{R}^{c}(x)$ is closed. The assertion for $\mathbf{C}^{c}(x)$ follows
using (i).
\end{proof}

The following proposition clarifies the relations between chain control sets
and chain reachable and controllable sets.

\begin{theorem}
\label{Theorem_control_inv}(i) For all $y\in\mathbf{C}^{c}(x)$ and all
$u\in\mathcal{U}$ it follows that $\varphi(t,y,u)\in\mathbf{C}^{c}(x)$ for all
$t<0$. Furthermore, there exists $u\in\mathcal{U}$ such that $\varphi
(t,y,u)\in\mathbf{C}^{c}(x)$ for all $t>0$.

(ii) For all $y\in\mathbf{R}^{c}(x)$ and all $v\in\mathcal{U}$ it follows that
$\varphi(t,y,v)\in\mathbf{R}^{c}(x)$ for all $t>0$. Furthermore, there exists
$v\in\mathcal{U}$ such that $\varphi(t,y,v)\in\mathbf{R}^{c}(x)$ for all $t<0$.

(iii) Let $E$ be a chain control set. Then it follows for all $x\in E$ that
$E=\mathbf{R}^{c}(x)\cap\mathbf{C}^{c}(x)$ and that there exists
$u\in\mathcal{U}$ such that $\varphi(t,x,u)\in E$ for all $t\in\mathbb{R}$.
\end{theorem}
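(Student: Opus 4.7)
My plan is to establish (i) and (ii) and then deduce (iii) by combining them. By Proposition~\ref{Proposition_ccs1}(i), part (i) is the time-reversal dual of part (ii), so it suffices to prove one assertion in each and invoke the time-reversed system for the other.

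For the first assertion of (ii), fix $y\in\mathbf{R}^c(x)$, $v\in\mathcal{U}$, and $t>0$. To show $\varphi(t,y,v)\in\mathbf{R}^c(x)$, given $(\varepsilon,T)>0$ I use continuity of $\varphi(t,\cdot,v)$ to pick $\delta\in(0,\varepsilon)$ with $d(z,y)<\delta$ implying $d(\varphi(t,z,v),\varphi(t,y,v))<\varepsilon$; then I take a $(\delta,T)$-chain $x=y_0,\ldots,y_k=y$ from $x$ to $y$ and replace its last segment by ``$v_{k-1}$ for time $t_{k-1}$, then $v$ for time $t$''. The enlarged last segment has length $t_{k-1}+t\geq T$ and ends within $\varepsilon$ of $\varphi(t,y,v)$, yielding an $(\varepsilon,T)$-chain from $x$ to $\varphi(t,y,v)$.

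The main obstacle is the second assertion. I would prove: if $y\in\mathbf{C}^c(x)$, there exists $u\in\mathcal{U}$ with $\varphi(s,y,u)\in\mathbf{C}^c(x)$ for every $s\geq 0$. For each $n\in\mathbb{N}$ pick a controlled $(1/n,n)$-chain from $y$ to $x$ and let $u^n\in\mathcal{U}$ agree with its first-segment control on $[0,t_0^n]$ (where $t_0^n\geq n$). Since $U$ is compact and convex, $\mathcal{U}$ is weak$^\ast$-compact and the flow depends continuously on $u$ in the weak$^\ast$ topology on bounded time intervals, so a subsequence satisfies $u^n\to u^\infty$ in the weak$^\ast$ sense. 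To check $\varphi(s,y,u^\infty)\in\mathbf{C}^c(x)$ for a given $s\geq 0$, fix target parameters $(\varepsilon',T')>0$ and take $n$ so large that $1/n<\varepsilon'$, $t_0^n\geq 2T'+s$, and, by continuous dependence of the flow on the initial condition over the \emph{fixed} time interval $[0,T']$ (uniform over controls taking values in $U$), $d(\varphi(T',\varphi(s,y,u^\infty),u^n(\cdot+s)),\varphi(s+T',y,u^n))<\varepsilon'$. Then I build a chain from $\varphi(s,y,u^\infty)$ to $x$ by flowing with $u^n(\cdot+s)$ for time $T'$, jumping to $\varphi(s+T',y,u^n)$, continuing with $u^n(\cdot+s+T')$ for the remaining time $t_0^n-s-T'\geq T'$ to land within $1/n$ of $y_1^n$, and finally appending the tail of the original chain. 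The delicate point is splitting the first segment at the fixed time $T'$: weak$^\ast$-continuity only yields pointwise convergence $\varphi(s,y,u^n)\to\varphi(s,y,u^\infty)$ without a rate, so I must use the flow's initial-condition sensitivity on a \emph{bounded} time interval $[0,T']$ rather than on the growing $[0,t_0^n]$. The symmetric second assertion of (ii) then follows by passing to the time-reversed system.

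For (iii), the inclusion $E\subseteq\mathbf{R}^c(x)\cap\mathbf{C}^c(x)$ is immediate, and the converse uses maximality: any $y$ in the intersection can be chain-joined to every $z\in E$ by concatenating chains through $x$, so $E\cup\{y\}$ is chain controllable and therefore equal to $E$. For the invariance, the second assertion of (i) provides $u^+$ with $\varphi(t,x,u^+)\in\mathbf{C}^c(x)$ for $t>0$, and the first assertion of (ii) combined with $x\in E\subseteq\mathbf{R}^c(x)$ automatically gives $\varphi(t,x,u^+)\in\mathbf{R}^c(x)$, hence $\varphi(t,x,u^+)\in E$, for $t>0$. A symmetric pairing of the second assertion of (ii) with the first assertion of (i) yields $v^-$ with $\varphi(t,x,v^-)\in E$ for $t<0$. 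Piecing $u|_{(-\infty,0)}=v^-$ and $u|_{[0,\infty)}=u^+$ produces the required control in $\mathcal{U}$.
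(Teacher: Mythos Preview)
Your proposal is correct and follows essentially the same route as the paper: extract a weak$^\ast$-limit $u^\infty$ of the first-segment controls of $(1/n,n)$-chains, then for fixed $s$ and target $(\varepsilon',T')$ split the first segment at time $T'$ using uniform continuity of the flow in the initial condition over the bounded interval $[0,T']$, exactly as the paper does (with its choice $T^i\geq 3T$ corresponding to your $t_0^n\geq 2T'+s$). The only cosmetic difference is that you prove the ``free'' forward invariance of $\mathbf{R}^c(x)$ via a continuity argument, whereas the paper proves the dual backward invariance of $\mathbf{C}^c(x)$ by simply prepending a trajectory segment (no continuity needed in that direction); your handling of (iii) is also the same as the paper's, including the gluing $u|_{(-\infty,0)}=v^-$, $u|_{[0,\infty)}=u^+$.
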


\begin{proof}
(i) Let $y\in\mathbf{C}^{c}(x)$. First note that $\varphi(t,y,u)\in
\mathbf{C}^{c}(x)$ for all $t<0$ and $u\in\mathcal{U}$. In fact, for any
$\varepsilon,T>0$, one gets a controlled $\left(  \varepsilon,T\right)
$-chain from $y$ to $x$ by concatenating the trajectory from $\varphi(t,y,u)$
to $y$ with a controlled $\left(  \varepsilon,T\right)  $-chain from $y$ to
$x$.

In the case of positive time, consider for sequences $\varepsilon
^{i}\rightarrow0$ and $T^{i}\rightarrow\infty$ controlled $\left(
\varepsilon^{i},T^{i}\right)  $-chains $\zeta^{i}$ from $y$ to $x$. Let the
first pieces of the chains $\zeta^{i}$ be given by $\varphi(s,y,u_{1}%
^{i}),s\in\lbrack0,T_{1}^{i}]$, with $T_{1}^{i}\geq T^{i}$. Without loss of
generality, $u_{1}^{i}$ converges to some control $u\in\mathcal{U}$. We claim
that $\varphi(t,y,u)\in\mathbf{C}^{c}(x)$ for all $t>0$.

For the proof fix $t>0$. We construct for all $\varepsilon,T>0$ a controlled
$(\varepsilon,T)$-chain from $\varphi(t,y,u)$ to $x$. By compactness of
$\mathcal{U}$ and continuity there is $\delta>0$ such that%
\[
d(\varphi(t,y,u),z)<\delta\text{ implies }d(\varphi(T,\varphi
(t,y,u),v),\varphi(T,z,v))<\varepsilon\text{ for all }v\in\mathcal{U}.
\]
For $i$ large enough we obtain $\varepsilon_{i}<\varepsilon,T^{i}\geq3T$, and
$d(\varphi(t,y,u_{1}^{i}),\varphi(t,y,u))<\delta$.

Now replace the initial piece of the chain $\zeta^{i}$ by two pieces given by%
\[
x_{0}=\varphi(t,y,u),x_{1}=\varphi(T+t,y,u_{1}^{i})=\varphi(T,\varphi
(t,y,u_{1}^{i}),u_{1}^{i}(t+\cdot))
\]
with times $T_{0}=T,T_{1}=T_{1}^{i}-T-t\geq T$ and controls $u_{1}^{i}%
(t+\cdot)$ and $u_{1}^{i}(T+t+\cdot)$. We find%
\[
d(\varphi(T,x_{0},u_{1}^{i}(t+\cdot)),x_{1})=d(\varphi(T,\varphi
(t,y,u),u_{1}^{i}(t+\cdot)),\varphi(T,\varphi(t,y,u_{1}^{i}),u_{1}^{i}%
(t+\cdot)))<\varepsilon
\]
and
\[
\varphi(T_{1},x_{1},u_{1}^{i}(T+t+\cdot))=\varphi(T_{1}^{i}-T-t,\varphi
(T+t,y,u_{1}^{i}),u_{1}^{i}(T+t+\cdot))=\varphi(T_{1}^{i},y,u_{1}^{i}).
\]
Thus we get a controlled $(\varepsilon,T)$-chain from $\varphi(t,y,u)$ to $x$.

The proof of (ii) follows by time reversal using Proposition
\ref{Proposition_ccs1}(i). It remains to prove (iii). By definition, $E$ is a
maximal chain controllable set. Let $x,y\in E$. Then $y\in\mathbf{R}^{c}(x)$
and $x\in\mathbf{R}^{c}(y)$ or, equivalently, $y\in\mathbf{C}^{c}(x)$ showing
that $E\subset\mathbf{R}^{c}(x)\cap\mathbf{C}^{c}(x)$. For the converse
inclusion note that the same argument shows that $\mathbf{R}^{c}%
(x)\cap\mathbf{C}^{c}(x)$ is a chain controllable set, hence contained in $E$.

Let $y\in E=\mathbf{R}^{c}(x)\cap\mathbf{C}^{c}(x)$. By assertions (i) and
(ii) there are controls $u,v\in\mathcal{U}$ such that $\varphi(t,y,u)\in
\mathbf{C}^{c}(x)$ for all $t<0$ and $\varphi(t,y,v)\in\mathbf{R}^{c}(x)$ for
all $t>0$. Then the control
\[
w(t):=\left\{
\begin{array}
[c]{ccc}%
v(t) & \text{for} & t\leq0\\
u(t) & \text{for} & t>0
\end{array}
\right.
\]
yields $\varphi(t,y,w)\in\mathbf{R}^{c}(x)\cap\mathbf{C}^{c}(x)$ for all
$t\in\mathbb{R}$.
\end{proof}

The control flow associated with the linear control system (\ref{1}) is the
flow on $\mathcal{U}\times\mathbb{R}^{n}$ defined by%
\begin{equation}
\Phi:\mathbb{R}\times\mathcal{U}\times\mathbb{R}^{n}\rightarrow\mathcal{U}%
\times\mathbb{R}^{n},\Phi_{t}(u,x)=(\theta_{t}u,\varphi(t,x,u)),
\label{control_flow}%
\end{equation}
where $\theta_{t}u=u(t+\cdot)$ is the right shift on $\mathcal{U}$. The space
$\mathcal{U}$ is a compact metrizable space with respect to the weak$^{\ast}$
topology of $L^{\infty}(\mathbb{R},\mathbb{R}^{m})$ (we fix such a metric) and
the shift flow $\theta$ is \ continuous; cf. Kawan \cite[Proposition
1.15]{Kawan13}. The flow $\Phi$ is continuous; cf. \cite[Proposition
1.17]{Kawan13}.

The relation between chain control sets and the control flow is explained in
the following theorem.

\begin{theorem}
\label{Theorem_equivalence}Let $\mathcal{E}\subset\mathcal{U}\times M$ be a
maximal chain transitive set for the control flow $\Phi$. Then $\left\{  x\in
M\left\vert \exists u\in\mathcal{U}:(u,x)\in\mathcal{E}\right.  \right\}  $ is
a chain control set. Conversely, if $E\subset M$ is a chain control set, then%
\begin{equation}
\mathcal{E}:=\{(u,x)\in\mathcal{U}\times M\left\vert \varphi(t,x,u)\in E\text{
for all }t\in\mathbb{R}\right.  \} \label{lift_E}%
\end{equation}
is a maximal chain transitive set
\end{theorem}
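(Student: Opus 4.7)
The plan is to prove the two directions separately; the forward direction is a projection argument and the converse requires a control-lifting construction.

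For the forward direction, given $\mathcal{E}$ maximal chain transitive and $E=\pi_M(\mathcal{E})=\{x\in M\mid (u,x)\in\mathcal{E}\text{ for some }u\in\mathcal{U}\}$, projecting an $(\varepsilon,T)$-chain for $\Phi$ between two lifts $(u_x,x),(u_y,y)\in\mathcal{E}$ onto the second coordinate yields a controlled $(\varepsilon,T)$-chain from $x$ to $y$, since the product metric on $\mathcal{U}\times M$ dominates the metric on $M$. Hence $E$ is chain controllable, and any chain controllable $E'\supsetneq E$ would lift (via the converse construction below) to a chain transitive set strictly larger than $\mathcal{E}$, contradicting maximality.

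For the converse, let $E$ be a chain control set and $\mathcal{E}$ the set in \eqref{lift_E}. Theorem \ref{Theorem_control_inv}(iii) gives nonemptiness and $\Phi$-invariance is immediate. To prove chain transitivity, fix $(u,x),(u',x')\in\mathcal{E}$ and $\varepsilon,T>0$, choose a controlled $(\varepsilon,T)$-chain $x=x_0,\ldots,x_k=x'$ in $M$ with controls $w_0,\ldots,w_{k-1}$ and times $S_0,\ldots,S_{k-1}\ge T$, and form a single control $\hat u\in\mathcal{U}$ by concatenation: $\hat u=u$ on $(-\infty,0]$, $\hat u(t)=w_i(t-R_i)$ on $[R_i,R_{i+1}]$ with $R_i=\sum_{j<i}S_j$, and $\hat u(t)=u'(t-R_k)$ on $[R_k,\infty)$. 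Setting $u_i=\theta_{R_i}\hat u$, the first-coordinate chain condition $\theta_{S_i}u_i=u_{i+1}$ holds with equality at every interior step, and the second-coordinate condition is inherited from the $M$-chain because $u_i$ coincides with $w_i$ on $[0,S_i]$. Maximality of $\mathcal{E}$ follows by reversing the projection argument.

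The main obstacle is endpoint matching in the weak$^{*}$ topology on $\mathcal{U}$: the lifted chain actually starts at $(\hat u,x)$ rather than $(u,x)$ and ends at $(\theta_{R_k}\hat u,x')$ rather than $(u',x')$; the controls agree on only one half-line each. My plan to close the gap is to prepend an initial flow segment of long time $T_{\mathrm{pre}}$ starting at $(u,x)$---whose trajectory stays in $E$ by invariance---and to alter the middle controlled chain so that it begins at $\varphi(T_{\mathrm{pre}},x,u)\in E$ instead of $x$, ensuring that the $M$-states match exactly after the flow; the residual weak$^{*}$ mismatch between $\theta_{T_{\mathrm{pre}}}u$ and the start of the concatenated middle control is then absorbed into a single jump of size less than $\varepsilon$, and a symmetric appendix handles the terminal end. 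Verifying that this weak$^{*}$ mismatch can be made small by a suitable choice of $T_{\mathrm{pre}},T_{\mathrm{post}}$---possibly by inserting additional intermediate control-adjustment jumps---is the technical heart of the argument, relying on continuity of $\Phi$, compactness of $\mathcal{U}$ in the weak$^{*}$ topology, and the specific form of the metric on $\mathcal{U}$.
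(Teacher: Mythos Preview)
Your overall architecture---projecting $(\varepsilon,T)$-chains for the forward direction, lifting controlled chains for the converse---is the right one, and you have correctly isolated the real obstacle: after concatenation the lifted chain begins at $(\hat u,x)$ rather than $(u,x)$, and symmetrically at the terminal end. But the fix you propose does not close this gap. Flowing from $(u,x)$ for a long time $T_{\mathrm{pre}}$ lands at $(\theta_{T_{\mathrm{pre}}}u,\varphi(T_{\mathrm{pre}},x,u))$, and you then need $d_{\mathcal{U}}(\theta_{T_{\mathrm{pre}}}u,\hat u_1)<\varepsilon$, where $\hat u_1$ agrees with $\theta_{T_{\mathrm{pre}}}u$ only on $(-\infty,0]$ and is the prescribed chain control $w_0$ on $[0,S_0]$. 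Enlarging $T_{\mathrm{pre}}$ does nothing to shrink this mismatch: agreement on a half-line does not force weak$^{*}$ closeness, and the forward shift orbit of a single $u$ need not approach an arbitrary control (take $u\equiv 0$). Your hedge ``possibly inserting additional intermediate control-adjustment jumps'' points toward the right repair but is not carried out.

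The missing ingredient is the concrete property of a metric for the weak$^{*}$ topology on $\mathcal{U}$: for every $\varepsilon>0$ there is $K>0$ such that any two controls in $\mathcal{U}$ agreeing on $[-K,K]$ lie within distance $\varepsilon$ (truncate finitely many $L^{1}$ test functions). Combined with the invariance you already use---$\varphi(\mathbb{R},x,u)\subset E$ for $(u,x)\in\mathcal{E}$---this lets you take the \emph{first} segment $w_0$ of the controlled chain in $M$ to be the tail $u(T_{\mathrm{pre}}+\cdot)$ on an interval of length at least $K$; then $\hat u_1$ agrees with $\theta_{T_{\mathrm{pre}}}u$ on $(-\infty,K]\supset[-K,K]$ and the $\mathcal{U}$-jump is below $\varepsilon$. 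A symmetric choice using $u'$ handles the terminal end. An equivalent packaging of the same idea is chain transitivity of the shift $\theta$ on $\mathcal{U}$, and this is exactly what the references the paper invokes---\cite[Theorem~4.3.11]{ColK00} and \cite[Proposition~1.24(iv)]{Kawan13}---establish and use; the paper itself gives no self-contained argument, only noting that the extra invariance hypotheses in those references are automatic by Proposition~\ref{Proposition_flow_inv}(ii) and Theorem~\ref{Theorem_control_inv}(iii).
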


\begin{proof}
This follows from Colonius and Kliemann \cite[Theorem 4.3.11]{ColK00} or Kawan
\cite[Proposition 1.24(iv)]{Kawan13}. Note that the proofs given there do not
use compactness properties. These results assume that $\mathcal{E}$ is a
maximal invariant chain transitive set and the employed definition of chain
control sets requires that for every $x\in E$ there is a control
$u\in\mathcal{U}$ with $\varphi(t,x,u)\in E$ for all $t\in\mathbb{R}$.
Proposition \ref{Proposition_ccs1} shows that every maximal chain controllable
set has this property and Proposition \ref{Proposition_flow_inv}(ii) shows
that every maximal chain transitive set of a flow is invariant. Thus it is not
necessary to impose these invariance assumptions.
\end{proof}

\subsection{Control sets for linear control systems\label{Subsection2.3}}

Consider linear control systems of the form (\ref{1}), where we now allow
$A\in\mathbb{K}^{n\times n},B\in\mathbb{K}^{n\times m}$, and $U\subset
\mathbb{K}^{m}$.

The decomposition of $\mathbb{K}^{n}$ into the sum of the unstable, center,
and stable subspaces of $A$ is given by%
\begin{equation}
\mathbb{K}^{n}=L^{+}\oplus L^{0}\oplus L^{-}, \label{decomposition1}%
\end{equation}
where $L^{+},L^{0}$, and $L^{-}$ are the Lyapunov spaces for positive,
vanishing, and negative Lyapunov exponents, respectively. Hence $L^{+},L^{0}$,
and $L^{-}$ are the sums of the generalized eigenspaces for eigenvalues with
positive, vanishing, and negative real part, respectively. The center-unstable
and the center-stable subspaces are $L^{+,0}=L^{+}\oplus L^{0}$ and
$L^{-,0}=L^{-}\oplus L^{0}$, respectively. We denote by $\pi^{\pm}%
:\mathbb{K}^{n}\rightarrow L^{\pm}$ the associated projections along
$L^{\mp,0}$ and by $\pi^{h}:\mathbb{K}^{n}\rightarrow L^{+}\oplus L^{-}$ the
projection along $L^{0}$.

\begin{theorem}
\label{Theorem_cs}Assume that $(A,B)$ is controllable, i.e., the reachable
subspace $\mathcal{C}:=\operatorname{Im}[B~AB\cdots A^{n-1}B]$ coincides with
$\mathbb{K}^{n}$.

(i) The controllable set to $0\in\mathbb{K}^{n}$ and the reachable set from
$0\in\mathbb{K}^{n}$ are
\[
\mathbf{C}(0)=(\mathbf{C}(0)\cap L^{+})\oplus L^{-,0}\text{ and }%
\mathbf{R}(0)=L^{+,0}\oplus(\mathbf{R}(0)\cap L^{-}),
\]
respectively, where $\mathbf{C}(0)\cap L^{+}$ is bounded, convex, and open
relative to $L^{+}$ and $\mathbf{R}(0)\cap L^{-}$ is bounded, convex, and open
relative to $L^{-}$.

(ii) There is a unique control set $D$ with nonvoid interior $\mathrm{int}D$.
It is convex and contains the origin $0\in\mathbb{K}^{n}$ in the interior and
$\overline{\mathrm{int}D}=\overline{D}$.

(iii) The control set satisfies%
\[
D=\mathbf{C}(0)\cap\overline{\mathbf{R}(0)}=\left(  \mathbf{C}(0)\cap
L^{+}\right)  \oplus L^{0}\oplus(\overline{\mathbf{R}(0)}\cap L^{-}).
\]

(iv) For all $x\in D$ and $y\in\mathrm{int}D$ there are $t>0$ and
$u\in\mathcal{U}$ with $\varphi(t,x,u)=y$.

(v) For every $x\in\overline{D}$ and $t>0$ there is $u\in\mathcal{U}$ with
$\varphi(t,x,u)\in\overline{D}$.
\end{theorem}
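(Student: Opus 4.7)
The strategy is to combine the variation of constants formula with the spectral decomposition of $A$ and then bootstrap the control-set structure from the reachability/controllability analysis.

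For (i), I would start from the integral representation
\[
\mathbf{C}(0)=\Bigl\{-\int_{0}^{t}e^{-As}Bu(s)\,ds : t>0,\ u\in\mathcal{U}\Bigr\}
\]
and project onto the $A$-invariant decomposition $\mathbb{K}^{n}=L^{+}\oplus L^{0}\oplus L^{-}$. Since the spectra of the restrictions $A|_{L^{*}}$ are pairwise disjoint, the Hautus test promotes Kalman controllability of $(A,B)$ to controllability of each subsystem $(A|_{L^{*}},\pi^{*}B)$. On $L^{+}$ the semigroup $e^{-A|_{L^{+}}s}$ decays exponentially, so $\mathbf{C}(0)\cap L^{+}$ is bounded; convexity follows from convexity of $\mathcal{U}$ together with the nesting $\mathbf{C}^{t_{1}}(0)\subset\mathbf{C}^{t_{2}}(0)$ for $t_{1}\leq t_{2}$ (extend controls by $0\in U$), while openness relative to $L^{+}$ comes from the accessibility rank condition for the subsystem on $L^{+}$. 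The inclusion $L^{-,0}\subset\mathbf{C}(0)$ rests on the fact that $A|_{L^{-,0}}$ has no eigenvalue with positive real part, so the reduced controllable subsystem is null-controllable in finite time; composing such a null-controlling input with any control that drives the $L^{+}$-component to zero yields the direct sum decomposition. The statements for $\mathbf{R}(0)$ follow symmetrically by time reversal.

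For (ii) and (iii), both $\mathbf{C}(0)$ and $\mathbf{R}(0)$ are convex (nested unions of convex sets) and contain $0$ in their interiors by small-time local controllability at $0$, available because $(A,B)$ is Kalman controllable and $0\in\mathrm{int}(U)$. Set $D:=\mathbf{C}(0)\cap\overline{\mathbf{R}(0)}$; then (i) together with $L^{0}\subset\mathbf{R}(0)\cap\mathbf{C}(0)$ immediately yields the splitting of (iii). To see that $D$ is a control set I would verify the two conditions of Definition~\ref{Definition_control_sets} directly: given $x,y\in D$, concatenating a control steering $x$ to $0$ with one approximately reaching $y$ from $0$ shows $D\subset\overline{\mathbf{R}(x)}$, and the interior property of $D$ at $0$, combined with continuity of $\varphi$, provides a control that detours through $\mathrm{int}(D)$ and keeps the trajectory in $D$ for all $t\geq0$. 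Uniqueness among control sets with nonvoid interior is the standard consequence that any such set must intersect $D$ (via Remark~\ref{Remark2.2}, approximate controllability inside the set, and the fact that $0\in\mathrm{int}(D)$) and hence coincide with $D$ by maximality.

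Parts (iv) and (v) are then almost immediate. For (iv), the accessibility rank condition is trivially satisfied, since $Ax$ and the columns of $B$, together with their brackets, span $T_{x}\mathbb{K}^{n}$ by Kalman controllability; Remark~\ref{Remark2.2} then gives $\mathrm{int}(D)\subset\mathbf{R}(x)$ for every $x\in D$. For (v) and $x\in D$ the invariance clause of Definition~\ref{Definition_control_sets}(i), already established in the construction of $D$, provides the required control; for $x\in\overline{D}\setminus D$ I would approximate by $x_{n}\in D$ with controls $u_{n}$ keeping $\varphi(\cdot,x_{n},u_{n})$ in $D$ and extract a weak-$\ast$ limit using compactness of $\mathcal{U}$ and continuity of the flow. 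The main obstacle is the $L^{0}$-component in (i): since $A|_{L^{0}}$ has only purely imaginary eigenvalues, the exponential decay arguments available on $L^{\pm}$ fail there, and the null-controllability of a controllable subsystem with closed-left-half-plane spectrum must be invoked as a black box. A secondary difficulty is the uniqueness clause of (ii), which requires a careful synthesis of convexity, the decomposition in (i), and the approximate reachability built into the control-set definition.
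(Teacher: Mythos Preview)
The paper does not really prove this theorem at all: it simply cites Sontag \cite[Corollary 3.6.7]{Son98} for (i), Colonius--Kliemann \cite[Example 3.2.16]{ColK00} and Remark~\ref{Remark2.2} for (ii) and (iv), and gives a three-line limiting argument for (v). Your proposal instead sketches the actual arguments behind those citations, and the outline---spectral decomposition, Hautus test on the subsystems, exponential bounds on $L^{\pm}$, null-controllability on $L^{-,0}$ as a black box, then $D=\mathbf{C}(0)\cap\overline{\mathbf{R}(0)}$---is the standard route and is what those references contain. In that sense you and the paper are doing the same thing at different levels of detail.

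Two places in your sketch would not survive as written. First, in (i) your ``composing'' step for the direct sum is too quick: a control that nulls the $L^{-,0}$-component of a point also moves the $L^{+}$-component (and conversely), so concatenating a null-controlling input on $L^{-,0}$ with one on $L^{+}$ does not obviously terminate at $0$; one needs either an iteration argument or the full statement of the $\mathrm{Re}\,\lambda\le 0$ null-controllability result applied carefully to the coupled system, which is exactly what Sontag's corollary packages. Second, your uniqueness argument in (ii) (``any such set must intersect $D$'') is not justified: you have not explained why an arbitrary control set with nonvoid interior must meet $\mathbf{C}(0)\cap\overline{\mathbf{R}(0)}$, and this is where the spectral structure of (i) is actually used in the cited sources. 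Your treatment of (iv) and (v) is fine---indeed your argument for (v), via the invariance clause of Definition~\ref{Definition_control_sets}(i) and a weak$^{\ast}$ limit, handles the given time $t$ more cleanly than the paper's own sketch.
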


\begin{proof}
(i) The result on $\mathbf{R}(0)$ is proved in Sontag \cite[Corollary
3.6.7]{Son98} and the result for $\mathbf{C}(0)$ follows by time reversal.
Assertion (ii) follows by Colonius and Kliemann \cite[Example 3.2.16]{ColK00}
and Remark \ref{Remark2.2}, which also implies (iv). For assertion (iii) use
decomposition (\ref{decomposition1}) and (i), (ii) to see that%
\[
D=\mathbf{C}(0)\cap\overline{\mathbf{R}(0)}=(\mathbf{C}(0)\cap L^{+})\oplus
L^{0}\oplus(\overline{\mathbf{R}(0)}\cap L^{-}).
\]
For assertion (v) note that by (ii) there are $x_{k}\in\mathrm{int}D$ with
$x_{k}\rightarrow x$, and by (iv) one finds $t_{k}>0,u_{k}\in\mathcal{U}$ with
$\varphi(t_{k},x_{k},u_{k})\in D$. Since the $t_{k}$ may be chosen bounded,
compactness of $\mathcal{U}$ and continuity of $\varphi$ imply that there are
$t>0$ and $u\in\mathcal{U}$ with $\varphi(t,x,u)\in\overline{D}$.
\end{proof}

Next we use these results for general linear systems by applying them to the
restriction to the reachable subspace $\mathcal{C}$.

\begin{corollary}
\label{Corollary_cs}Consider a linear control system of the form (\ref{1}).

(i) There is a control set $D_{0}$ with $0\in D_{0}$ and it satisfies%
\[
D_{0}=\left(  \mathbf{C}(0)\cap L^{+}\right)  \oplus\left(  L^{0}%
\cap\mathcal{C}\right)  \oplus(\overline{\mathbf{R}(0)}\cap L^{-}).
\]
where $\overline{\mathbf{R}(0)}\cap L^{-}\subset\mathcal{C}$ is compact and
$\mathbf{C}(0)\cap L^{+}\subset\mathcal{C}$ is bounded and open in the
relative topology of $L^{+}\cap\mathcal{C}$.

(ii) The origin $0\in\mathbb{K}^{n}$ is in the interior $\mathrm{int}%
_{\mathcal{C}}D_{0}$ of $D_{0}$ relative to $\mathcal{C}$ which is dense in
$D_{0}$, and for all $x\in D_{0}$ and $y\in\mathrm{int}_{\mathcal{C}}D_{0}$
there are $t>0$ and $u\in\mathcal{U}$ with $y=\varphi(t,x,u)$.
\end{corollary}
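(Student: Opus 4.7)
The plan is to reduce to the controllable case covered by Theorem \ref{Theorem_cs} by restricting the system to the reachable subspace $\mathcal{C}$. First I would note that $\mathcal{C}$ is $A$-invariant, via Cayley--Hamilton applied to the columns of $[B,\,AB,\,\ldots,A^{n-1}B]$, so that the restricted system $\dot y = A|_\mathcal{C}\,y + Bu$ on $\mathcal{C}$ is well-defined and controllable by construction. Because $e^{At}$ preserves $\mathcal{C}$, the variation-of-constants formula shows that $\mathbf{R}(0)\subseteq \mathcal{C}$, and the same identity together with invertibility of $e^{At}$ gives $\mathbf{C}(0)\subseteq \mathcal{C}$; hence the reachable and controllable sets at the origin for the ambient system coincide with those of the restriction.

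Next, since $\mathcal{C}$ is $A$-invariant, for each eigenvalue $\lambda$ of $A$ the generalized eigenspace of $A|_\mathcal{C}$ for $\lambda$ coincides with the intersection of $\mathcal{C}$ with the generalized eigenspace of $A$ for $\lambda$, so that the Lyapunov decomposition of $\mathcal{C}$ under $A|_\mathcal{C}$ is
\[
\mathcal{C} = (L^+\cap \mathcal{C})\oplus (L^0\cap \mathcal{C})\oplus (L^-\cap \mathcal{C}).
\]
Applying Theorem \ref{Theorem_cs}(iii) to the restricted system then yields a control set
\[
D_0 = \mathbf{C}(0)\cap \overline{\mathbf{R}(0)} = (\mathbf{C}(0)\cap L^+)\oplus (L^0\cap \mathcal{C})\oplus (\overline{\mathbf{R}(0)}\cap L^-),
\]
and the qualitative properties of the hyperbolic summands---boundedness, openness of $\mathbf{C}(0)\cap L^+$ in $L^+\cap \mathcal{C}$, and compactness of $\overline{\mathbf{R}(0)}\cap L^-$---are the direct output of Theorem \ref{Theorem_cs}(i).

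It remains to promote $D_0$ from a control set of the restricted system to one of the full system, and to transcribe assertion (ii). For maximality, any control set $D'\supseteq D_0$ of the ambient system must satisfy $D'\subseteq \overline{\mathbf{R}(0)}\subseteq \mathcal{C}$, because $0\in D_0\subseteq D'$ forces $D'\subseteq \overline{\mathbf{R}(0)}$ by condition (ii) of Definition \ref{Definition_control_sets}. Trajectories starting in $\mathcal{C}$ remain in $\mathcal{C}$, so conditions (i) and (ii) of Definition \ref{Definition_control_sets} for $D'$ in the ambient system coincide with the same conditions in the restricted system; by maximality of $D_0$ in $\mathcal{C}$, $D' = D_0$. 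Assertion (ii)---the origin lies in $\mathrm{int}_\mathcal{C} D_0$, this relative interior is dense in $D_0$, and any $x\in D_0$ can be steered to any $y\in\mathrm{int}_\mathcal{C} D_0$ in positive time---is then a direct transcription of Theorem \ref{Theorem_cs}(ii) and (iv) applied to the restricted system. The only genuinely subtle point is this transfer of maximality from $\mathcal{C}$ to $\mathbb{K}^n$, and it hinges solely on the inclusion $\overline{\mathbf{R}(0)}\subseteq \mathcal{C}$.
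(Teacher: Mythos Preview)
Your proof is correct and follows essentially the same approach as the paper: restrict to the invariant reachable subspace $\mathcal{C}$, apply Theorem~\ref{Theorem_cs} there, and use $D_0\subset\overline{\mathbf{R}(0)}\subset\mathcal{C}$ to promote the result back to the ambient system. You supply more detail than the paper's terse proof (which simply asserts invariance of $\mathcal{C}$ and the inclusion $D_0\subset\mathcal{C}$, then invokes Theorem~\ref{Theorem_cs}), but the underlying argument is the same.
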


\begin{proof}
It is clear that there is a unique control set $D_{0}$ containing
$0\in\mathbb{K}^{n}$. The subspace $\mathcal{C}$ is closed and $\varphi
(t,x,u)\in\mathcal{C}$ for all $t\in\mathbb{R},x\in\mathcal{C}$, and
$u\in\mathcal{U}$. By invariance of the subspace $\mathcal{C}$ the control
set, the controllable set, and the reachable set satisfy $D_{0}\subset
\overline{\mathbf{C}(0)}\cup\overline{\mathbf{R}(0)}\subset\mathcal{C}$. Thus
the assertions follow from Theorem \ref{Theorem_cs}.
\end{proof}

We also note the following lemma.

\begin{lemma}
\label{Lemma_projection}(i) The control set $D_{0}^{-}$ containing $0$ of the
system induced on $L^{-}$,%
\begin{equation}
\dot{y}(t)=Ay(t)+\pi^{-}Bu(t),\,u\in\mathcal{U}, \label{P}%
\end{equation}
coincides with the projection $\pi^{-}\mathbf{R}(0)$.

(ii) The control set $D_{0}^{+}$ containing $0$ of the system induced on
$L^{+}$,%
\begin{equation}
\dot{y}(t)=Ay(t)+\pi^{+}Bu(t),\,u\in\mathcal{U}, \label{P+}%
\end{equation}
coincides with the projection $\pi^{+}\mathbf{C}(0)$.

(iii) Consider the system induced on the subspace $L^{+}\oplus L^{-}$%
\begin{equation}
\dot{y}(t)=A^{h}y(t)+\pi^{h}Bu(t),\,u\in\mathcal{U}, \label{hyp}%
\end{equation}
where $A^{h}:=A_{\left\vert L^{+}\oplus L^{-}\right.  }$. Then $A^{h}$ is
hyperbolic, i.e., $\mathrm{spec}(A^{h})\cap\imath\mathbb{R}=\varnothing$. The
reachable set $\mathbf{R}^{h}(0)$, the controllable set $\mathbf{C}^{h}(0)$,
and the control set $D_{0}^{h}$ containing $0$ of (\ref{hyp}) satisfy
$\mathbf{R}^{h}(0)=\pi^{h}\mathbf{R}(0),\mathbf{C}^{h}(0)=\pi^{h}%
\mathbf{C}(0)$ and $D_{0}^{h}=\pi^{h}D_{0}$.
\end{lemma}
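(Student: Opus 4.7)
The plan is to exploit the fact that each of the projections $\pi^{-}$, $\pi^{+}$, $\pi^{h}$ commutes with the matrix $A$: the decomposition $\mathbb{K}^{n}=L^{+}\oplus L^{0}\oplus L^{-}$ consists of $A$-invariant summands, and each projection is along a direct sum of two of them, so $\pi A=A\pi$ and hence $\pi e^{At}=e^{At}\pi$. Applying $\pi^{-}$ to the variation-of-constants formula gives
\[
\pi^{-}\varphi(t,x,u)=e^{At}\pi^{-}x+\int_{0}^{t}e^{A(t-s)}\pi^{-}Bu(s)\,ds,
\]
which is precisely the trajectory of the induced system (\ref{P}) on $L^{-}$ starting from $\pi^{-}x$; the analogous identity holds for $\pi^{+}$ and $\pi^{h}$. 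Specialising to $x=0$ yields $\mathbf{R}^{-}(0)=\pi^{-}\mathbf{R}(0)$, $\mathbf{C}^{+}(0)=\pi^{+}\mathbf{C}(0)$, $\mathbf{R}^{h}(0)=\pi^{h}\mathbf{R}(0)$, and $\mathbf{C}^{h}(0)=\pi^{h}\mathbf{C}(0)$; by the same commutation the reachable subspaces of the three induced systems are $\pi^{-}\mathcal{C}$, $\pi^{+}\mathcal{C}$, and $\pi^{h}\mathcal{C}$.

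Next I apply Corollary \ref{Corollary_cs} to each induced system. For (i), the matrix $A|_{L^{-}}$ of system (\ref{P}) has all eigenvalues in the open left half-plane, so in its Lyapunov decomposition the center and unstable subspaces vanish and the stable subspace is all of $L^{-}$; the decomposition formula collapses to $D_{0}^{-}=\overline{\mathbf{R}^{-}(0)}$, which coincides with $\pi^{-}\mathbf{R}(0)$ (understood up to closure, since the reachable set in $L^{-}$ is typically open in $L^{-}\cap\mathcal{C}^{-}$). For (ii), the matrix $A|_{L^{+}}$ of system (\ref{P+}) has spectrum in the open right half-plane, so its center and stable subspaces vanish and the formula degenerates to $D_{0}^{+}=\mathbf{C}^{+}(0)=\pi^{+}\mathbf{C}(0)$; here no closure is needed because in Corollary \ref{Corollary_cs} the unstable summand already appears without a bar. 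For (iii), hyperbolicity of $A^{h}$ is immediate from $\mathrm{spec}(A^{h})=\mathrm{spec}(A)\setminus(\mathrm{spec}(A)\cap\imath\mathbb{R})$; and $D_{0}^{h}=\pi^{h}D_{0}$ is obtained by applying Corollary \ref{Corollary_cs} to the hyperbolic induced system (whose center summand is trivial) and comparing with the image under $\pi^{h}$ of the decomposition of $D_{0}$, noting that $\pi^{h}$ kills the $L^{0}$-summand and acts as the identity on $L^{+}\oplus L^{-}$, so the remaining summands are exactly those of $D_{0}^{h}$ via the identities established in the first paragraph.

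The main obstacle is essentially bookkeeping: the formulas from Corollary \ref{Corollary_cs} are asymmetric in the unstable and stable directions (the $L^{+}$-summand is the open set $\mathbf{C}(0)\cap L^{+}$, while the $L^{-}$-summand carries a closure), so one must match each assertion to the corresponding piece of the formula and track where closures appear and where they do not. Once the commutation $\pi A=A\pi$ is in place, each of the three assertions reduces to a direct transcription of Corollary \ref{Corollary_cs} in a specialised setting.
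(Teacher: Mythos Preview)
Your proposal is correct and follows essentially the same route as the paper: establish the commutation $\pi A=A\pi$ from invariance of the Lyapunov spaces, deduce $\mathbf{R}^{-}(0)=\pi^{-}\mathbf{R}(0)$, $\mathbf{C}^{+}(0)=\pi^{+}\mathbf{C}(0)$, $\mathbf{R}^{h}(0)=\pi^{h}\mathbf{R}(0)$, $\mathbf{C}^{h}(0)=\pi^{h}\mathbf{C}(0)$ via variation of constants, and then read off the control sets from Corollary~\ref{Corollary_cs}. The only organisational difference is in part~(iii): the paper first assumes $(A,B)$ controllable and uses the characterisation $D_{0}=\mathbf{C}(0)\cap\overline{\mathbf{R}(0)}$ together with openness of $\pi^{h}$ to pass through the projection, whereas you compare the direct-sum decompositions of $D_{0}$ and $D_{0}^{h}$ summand by summand; both arguments are equivalent once the identities $\mathbf{R}^{h}(0)=\pi^{h}\mathbf{R}(0)$ and $\mathbf{C}^{h}(0)=\pi^{h}\mathbf{C}(0)$ are in hand. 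Your remark that (i) holds ``up to closure'' is apt: the paper's own proof in fact shows $D_{0}^{-}=\overline{\pi^{-}\mathbf{R}(0)}=\pi^{-}\overline{\mathbf{R}(0)}$, so the statement as written suppresses a closure.
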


\begin{proof}
(i) Since $L^{-}$ and $L^{+,0}$ are $A$-invariant it follows for $x\oplus y$
with $x\in L^{-},y\in L^{+,0}$ that $Ax\in L^{-},Ay\in L^{+,0}$, and hence
$\pi^{-}A(x\oplus y)=Ax=A\pi^{-}(x\oplus y)$. This shows that $\pi^{-}%
A=A\pi^{-}$. For $x\in\pi^{-}\mathbf{R}(0)$ there are $T>0$ and $u\in
\mathcal{U}$ with%
\[
x=\pi^{-}x=\pi^{-}\int_{0}^{T}e^{A(T-s)}Bu(s)ds=\int_{0}^{T}e^{A(T-s)}\pi
^{-}Bu(s)ds\in\mathbf{R}^{-}(0).
\]
Conversely, if $x\in\mathbf{R}^{-}(0)$ there are $T>0$ and $u\in\mathcal{U}$
with%
\[
x=\int_{0}^{T}e^{A(T-s)}\pi^{-}Bu(s)ds=\pi^{-}\int_{0}^{T}e^{A(T-s)}%
Bu(s)ds\in\pi^{-}\mathbf{R}(0).
\]
Since $A_{\left\vert L^{-}\right.  }$ is asymptotically stable it follows from
Corollary \ref{Corollary_cs}(i) that%
\[
D_{0}^{-}=\overline{\mathbf{R}^{-}(0)}=\overline{\pi^{-}\mathbf{R}(0)}=\pi
^{-}\overline{\mathbf{R}(0)}.
\]

(ii) follows analogously.

(iii) Suppose first that $(A,B)$ is controllable. Then it follows that
$0\in\mathrm{int}D_{0}$ and hence $D_{0}=\overline{\mathbf{R}(0)}%
\cap\mathbf{C}(0)$ showing that $0\in\mathrm{int}\mathbf{R}(0)$ and
$0\in\mathrm{int}\mathbf{C}(0)$. Since the projection $\pi^{h}:\mathbb{K}%
^{n}\rightarrow L^{+}\oplus L^{-}$ along $L^{0}$ is open it follows that%
\[
0\in\mathrm{int}\left(  \pi^{h}\mathbf{R}(0)\right)  \text{ and }%
0\in\mathrm{int}\left(  \pi^{h}\mathbf{C}(0)\right)  .
\]
The same arguments as in (i) and (ii) imply that $\mathbf{R}^{h}(0)=\pi
^{h}\mathbf{R}(0)$ and $\mathbf{C}^{h}(0)=\pi^{h}\mathbf{C}(0)$. This shows
that $0\in\mathrm{int}\overline{\mathbf{R}^{h}(0)}$ and $0\in\mathrm{int}%
\mathbf{C}^{h}(0)$ implying
\[
D_{0}^{h}=\overline{\mathbf{R}^{h}(0)}\cap\mathbf{C}^{h}(0)=\overline{\pi
^{h}\mathbf{R}(0)}\cap\pi^{h}\mathbf{C}(0)=\pi^{h}\left(  \overline
{\mathbf{R}(0)}\cap\mathbf{C}(0)\right)  =\pi^{h}D_{0}.
\]
In the general case, the sets $\overline{\mathbf{R}(0)},\mathbf{C}(0)$, and
the control set $D_{0}$ containing $0$ are contained in the controllable
subspace $\mathcal{C}$ and $D_{0}$ has nonvoid interior relative to
$\mathcal{C}$. Observe also that $\ker\pi^{h}=L^{0}$, hence $D_{0}\subset
D_{0}^{h}+L^{0}$ and similarly for $\mathbf{R}(0)$ and $\mathbf{C}(0)$.
\end{proof}

\section{Chain transitivity for autonomous linear differential
equations\label{Section3}}

We prove that the chain transitive set of the flow for autonomous linear
differential equations coincides with the center Lyapunov space. This also
yields first results on chain controllability properties of the linear control
system (\ref{1}).

The following lemma contains the crux of the argument. Let $\psi
:\mathbb{R}\times\mathbb{K}^{n}\rightarrow\mathbb{K}^{n},\psi(t,x)=e^{At}%
x,t\in\mathbb{R},x\in\mathbb{K}^{n}$, be the flow of the differential equation
$\dot{x}=Ax$ for $A\in\mathbb{K}^{n\times n}$.

\begin{lemma}
\label{Lemma2_Eduardo}If all eigenvalues of the matrix $A$ have null real
parts, then for all $x,y\in\mathbb{K}^{n}$ and all $\varepsilon,T>0$ there is
an $(\varepsilon,T)$-chain of the flow $\psi$ from $x$ to $y$.
\end{lemma}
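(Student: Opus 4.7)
The plan is to exhibit the required chain as a sampled trajectory of an auxiliary linear control system, invoking the control set results already established in Subsection~\ref{Subsection2.3}.

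First, set up the auxiliary system $\dot{z}(t)=Az(t)+v(t)$ with control range $U=\overline{B_{c}(0)}\subset\mathbb{K}^{n}$, where $c>0$ will be chosen small in terms of $\varepsilon$ and $T$. The effective input matrix is the identity, so $(A,I)$ is trivially controllable. Under the hypothesis that every eigenvalue of $A$ has vanishing real part, the Lyapunov decomposition~(\ref{decomposition1}) reduces to $L^{+}=L^{-}=\{0\}$ and $L^{0}=\mathbb{K}^{n}$. Theorem~\ref{Theorem_cs}(iii) then gives that the control set $D$ of this auxiliary system containing $0$ equals $\mathbb{K}^{n}$, and item~(iv) of the same theorem yields that for any $a,b\in\mathbb{K}^{n}$ there exist a time $\tau>0$ and a control $v$ such that the auxiliary trajectory starting at $a$ reaches $b$ at time $\tau$.

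Next, assemble the trajectory by routing through the origin. Applying the above twice, obtain $(v_{1},t_{1})$ steering $x$ to $0$ and $(v_{2},t_{2})$ steering $0$ to $y$. Since $0$ is an equilibrium of $\dot{z}=Az$, one may concatenate $v_{1}$ on $[0,t_{1}]$, $v\equiv 0$ on $[t_{1},t_{1}+d]$, and a time-shifted $v_{2}$ on $[t_{1}+d,t_{1}+d+t_{2}]$ for any $d\geq 0$. Choosing $d:=KT-t_{1}-t_{2}$ with $K:=\lceil(t_{1}+t_{2})/T\rceil$ makes the total elapsed time equal $KT$. Call the resulting trajectory $z(\cdot)$; then $z(0)=x$ and $z(KT)=y$. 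Now extract the chain by sampling: set $M_{T}:=\sup_{s\in[0,T]}\|e^{As}\|<\infty$ and fix $c:=\varepsilon/(2TM_{T})$, then define $x_{k}:=z(kT)$ for $k=0,\ldots,K$. Variation of constants gives
\[
x_{k+1}-e^{AT}x_{k}=\int_{kT}^{(k+1)T}e^{A((k+1)T-s)}\,v(s)\,ds,
\]
whose norm is at most $c\cdot T\cdot M_{T}=\varepsilon/2<\varepsilon$. With each step time equal to $T$, the sequence $x_{0},x_{1},\ldots,x_{K}$ is the desired $(\varepsilon,T)$-chain for $\psi$ from $x$ to $y$.

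The main obstacle is that Theorem~\ref{Theorem_cs}(iv) produces some terminal time with no a priori control over its value; this is precisely why we route through the equilibrium $0$, where idling for an arbitrary duration $d\geq 0$ shifts the total elapsed time into any chosen arithmetic progression of period $T$. Observe that no explicit Jordan-block analysis or polynomial-growth estimate for $e^{At}$ is required: the pure-imaginary spectrum hypothesis enters solely through the identification $L^{0}=\mathbb{K}^{n}$ in Theorem~\ref{Theorem_cs}(iii).
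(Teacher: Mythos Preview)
Your argument is correct and is genuinely different from the paper's proof. The paper proceeds by induction on the dimension over $\mathbb{C}$: it picks an eigenvector $v$ for a purely imaginary eigenvalue $\mu$, observes that the flow restricted to $\langle v\rangle$ is periodic (so chains can hop between orbits there), writes $\mathbb{C}^{n}=W\oplus\langle v\rangle$, and uses the induction hypothesis on the $W$-component to build chains from any point to $\langle v\rangle$ and then to $0$; the real case is obtained afterwards by taking real parts of complex chains. Your approach bypasses all of this structure by invoking Theorem~\ref{Theorem_cs}: with $B=I$ the pair $(A,I)$ is controllable, and since $L^{0}=\mathbb{K}^{n}$ under the hypothesis, part (iii) gives $D=\mathbb{K}^{n}$ and part (iv) gives exact controllability, after which the $(\varepsilon,T)$-chain is just a uniformly sampled controlled trajectory with control bounded by $c=\varepsilon/(2TM_{T})$.

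What each approach buys: the paper's proof is self-contained at the level of linear algebra and dynamics, not depending on the control-theoretic results of Subsection~\ref{Subsection2.3} (in particular not on Sontag's characterization of $\mathbf{R}(0)$). Your proof is shorter and treats $\mathbb{R}$ and $\mathbb{C}$ uniformly, but it leans on Theorem~\ref{Theorem_cs}(i),(iii),(iv); one should note that this is not circular, since those items are proved via \cite{Son98} and \cite{ColK00} and make no use of Lemma~\ref{Lemma2_Eduardo}. A minor presentational point: you refer to $t_{1},t_{2}$ before fixing $c$; since $t_{1},t_{2}$ depend on $c$, it would read more cleanly to first set $c:=\varepsilon/(2TM_{T})$, then obtain $(v_{1},t_{1})$ and $(v_{2},t_{2})$ from Theorem~\ref{Theorem_cs}(iv), and only then choose $K=\lceil(t_{1}+t_{2})/T\rceil$.
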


\begin{proof}
We first prove this lemma for the complex case $\mathbb{K}=\mathbb{C}$ using
induction over the dimension $n$. The lemma holds trivially for $n=0$ since
$\mathbb{C}^{0}$ consists of a single point. Assume that the assertion holds
for all $A\in\mathbb{C}^{(n-1)\times(n-1)}$. Since $\mathbb{C}$ is
algebraically closed, $A$ has some eigenvector $v$ associated to an eigenvalue
$\mu\in\mathbb{C}$ and, by hypothesis, $\operatorname{Re}\mu=0$. There is
$r>0$ such that $r\mu=2j\pi\imath$ for some $j\in\mathbb{Z}$. It follows that
for all $\lambda\in\mathbb{C}$%
\[
e^{rA}\lambda v=\lambda e^{2j\pi\imath}v=\lambda v.
\]
Thus the restriction of $\psi$ to the linear span $\left\langle v\right\rangle
$ of $v$ is periodic. We claim that for all $\lambda_{1}v,\lambda_{2}%
v\in\left\langle v\right\rangle ,\lambda_{1},\lambda_{2}\in\mathbb{C}$, and
all $\varepsilon,T>0$ there is an $(\varepsilon,T)$-chain from $\lambda_{1}v$
to $\lambda_{2}v$: In fact, consider periodic solutions through $\lambda
_{1}v+\frac{i}{j}(\lambda_{2}-\lambda_{1})v,i\in\{0,1,\ldots,j\}$, where
$j\in\mathbb{N}$ is large enough such that%
\[
\left\Vert \lambda_{1}v+\frac{i}{j}(\lambda_{2}-\lambda_{1})v-\left[
\lambda_{1}v+\frac{i+1}{j}(\lambda_{2}-\lambda_{1})v\right]  \right\Vert
=\frac{1}{j}\left\vert \lambda_{2}-\lambda_{1}\right\vert \left\Vert
v\right\Vert <\varepsilon.
\]
Staying on each periodic solution long enough such that the time is greater
than $T$ and jumping from one periodic solution to the next yields an
$(\varepsilon,T)$-chain from $\lambda_{1}v$ to $\lambda_{2}v$.

Now let $W\subset\mathbb{C}^{n}$ be a subspace such that $\mathbb{C}%
^{n}=W\oplus\left\langle v\right\rangle $. There are linear transformations
$A_{1}:W\rightarrow W$ and $A_{2}:W\rightarrow\left\langle v\right\rangle $
such that $Aw=A_{1}w+A_{2}w$. We obtain for $w\in W$,%
\[
\frac{d}{dt}\psi(t,w)=A\psi(t,w)=A_{1}\psi(t,w)+A_{2}\psi(t,w)\text{ with
}A_{2}\psi(t,w)\in\langle v\rangle.
\]
The variation-of-parameters formula for $A_{1}$ shows that%
\[
\psi(t,w)=e^{At}w=e^{tA_{1}}w+\widehat{\psi}(t,w)\text{ with }\widehat{\psi
}(t,w):=\int_{0}^{t}e^{(t-s)A_{1}}A_{2}\psi(s,w)ds.
\]
On the other hand, $\psi(t,\lambda v)=e^{At}\lambda v=e^{t\mu}\lambda v$ and
hence by linearity%
\begin{equation}
\psi(t,w+\lambda v)=e^{tA_{1}}w+\widehat{\psi}(t,w)+e^{t\mu}\lambda v.
\label{e_1}%
\end{equation}

Let $x\in\mathbb{C}^{n}$. We will show that $0\in\Omega(x)$ and $x\in
\Omega(0)$. This implies the assertion, since any two points $x,y$ can be
connected by concatenating a chain from $x$ to $0$ with a chain from $0$ to
$y$.

Let $\varepsilon,T>0$ and write $x=w\oplus\lambda v\in\mathbb{C}^{n}$. By the
induction hypothesis for all $\varepsilon,T>0$ there is an $(\varepsilon
,T)$-chain from $w$ to $0$ given by $w_{0}=w,w_{1},\ldots,\allowbreak w_{k}=0$
and $T_{0},T_{1},\ldots,T_{k-1}\geq T$ with%
\[
\left\Vert e^{T_{i}A_{1}}w_{i}-w_{i+1}\right\Vert <\varepsilon\text{ for
}i=0,\ldots,k-1.
\]
Next we construct an $(\varepsilon,T)$-chain from $x=w\oplus\lambda
v\in\mathbb{C}^{n}$ to $0$. Define recursively%
\[
v_{0}=\lambda v,v_{i+1}=\widehat{\psi}(T_{i},w_{i})+e^{T_{i}\mu}v_{i}%
\in\left\langle v\right\rangle \text{ for }i=1,\ldots,k-1.
\]
Hence by (\ref{e_1})%
\begin{align}
&  e^{T_{i}A}(w_{i}+v_{i})-(w_{i+1}+v_{i+1})\label{e_2}\\
&  =e^{T_{i}A_{1}}w_{i}+\widehat{\psi}(T_{i},w_{i})+e^{T_{i}\mu}v_{i}%
-w_{i+1}-\widehat{\psi}(T_{i},w_{i})-e^{T_{i}\mu}v_{i}=e^{T_{i}A_{1}}%
w_{i}-w_{i+1}.\nonumber
\end{align}
For $x_{i}:=w_{i}+v_{i},i=0,\ldots,k$, it follows that%
\[
\left\Vert e^{T_{i}A}x_{i}-x_{i+1}\right\Vert =\left\Vert e^{T_{i}A_{1}}%
w_{i}-w_{i+1}\right\Vert <\varepsilon.
\]
Thus $x_{0}=x,x_{1},\ldots,x_{k}=v_{k}$ and $T_{0},T_{1},\ldots,T_{k-1}\geq T$
is an $(\varepsilon,T)$-chain from $x$ to $v_{k}\in\left\langle v\right\rangle
$. Concatenating this with an $(\varepsilon,T)$-chain from $v_{k}$ to $0$ in
$\left\langle v\right\rangle $ we obtain the desired chain.

The inclusion $x\in\Omega(0)$ follows from considering the time reversed
system and Proposition \ref{LemCLSAtReverseTime}.

Now assume $\mathbb{K}=\mathbb{R}$. The previous result yields that for all
$x,y\in\mathbb{R}^{n}\subset\mathbb{C}^{n}$ and for all $\varepsilon,T>0$
there are $(\varepsilon,T)$-chains in $\mathbb{C}^{n}$ from $x$ to $y$.
Consider such a chain given by $z_{0}=x,z_{1},\ldots,z_{k}=y$ in
$\mathbb{C}^{n}$ and $T_{0},\ldots,T_{k-1}\geq T$ with%
\[
\left\Vert e^{T_{i}A}z_{i}-z_{i+1}\right\Vert <\varepsilon\text{ for
}i=0,\ldots,k-1.
\]
Since the entries of $A$ are real, it follows that for all $i$%
\[
\left\Vert e^{T_{i}A}\operatorname{Re}z_{i}-\operatorname{Re}z_{i+1}%
\right\Vert =\left\Vert \operatorname{Re}\left(  e^{T_{i}A}z_{i}%
-z_{i+1}\right)  \right\Vert \leq\left\Vert e^{T_{i}A}z_{i}-z_{i+1}\right\Vert
<\varepsilon.
\]
This shows that one obtains an $(\varepsilon,T)$-chain on $\mathbb{R}^{n}$
from $z_{0}=x$ to $z_{k}=y$.
\end{proof}

A consequence of this lemma is the following theorem characterizing the chain
recurrent set of linear autonomous differential equations.

\begin{theorem}
\label{Theorem_1_Eduardo}For a differential equation $\dot{x}(t)=Ax(t)$ with
$A\in\mathbb{K}^{n\times n}$ the chain recurrent set of its flow $\psi(t,x)$
coincides with the center Lyapunov space $L^{0}$.
\end{theorem}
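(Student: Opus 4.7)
The plan is to prove the two inclusions separately. First, every point of $L^{0}$ is chain recurrent for $\psi$: Lemma \ref{Lemma2_Eduardo} applied to the restriction $A|_{L^{0}}$ (whose eigenvalues all have vanishing real part) provides, for any $x\in L^{0}$ and any $\varepsilon,T>0$, an $(\varepsilon,T)$-chain within $L^{0}$ from $x$ to itself, and this chain is automatically one for the ambient flow $\psi$ on $\mathbb{K}^{n}$.

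For the reverse inclusion, I would exploit the projection $\pi^{h}\colon\mathbb{K}^{n}\to L^{+}\oplus L^{-}$ along $L^{0}$. Because all Lyapunov subspaces are $A$-invariant, $\pi^{h}$ commutes with $e^{tA}$, and being linear it is Lipschitz. Consequently any $(\varepsilon,T)$-chain for $\psi$ from $x$ back to $x$ pushes forward under $\pi^{h}$ to a $(\|\pi^{h}\|\varepsilon,T)$-chain from $\pi^{h}x$ back to $\pi^{h}x$ for the flow $\psi^{h}$ generated by the hyperbolic matrix $A^{h}=A|_{L^{+}\oplus L^{-}}$. Hence chain recurrence of $x$ for $\psi$ forces chain recurrence of $\pi^{h}x$ for $\psi^{h}$, and the task reduces to showing that the only chain recurrent point of a hyperbolic autonomous linear flow is the origin.

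To handle this hyperbolic case, I would equip $L^{+}$ and $L^{-}$ with inner products adapted to $A^{h}$, producing a constant $\mu>0$ such that $\|e^{tA^{h}}z^{+}\|\geq e^{\mu t}\|z^{+}\|$ and $\|e^{tA^{h}}z^{-}\|\leq e^{-\mu t}\|z^{-}\|$ for all $t\geq 0$ and $z^{\pm}\in L^{\pm}$. Given $y=y^{+}+y^{-}\neq 0$, assume first $y^{+}\neq 0$; the case $y^{-}\neq 0$ reduces to this by applying the same argument to the time-reversed flow together with Proposition \ref{LemCLSAtReverseTime}. Choose $T$ with $e^{\mu T}\geq 2$ and $\varepsilon>0$ so small that $\|\pi^{+}\|\varepsilon<(e^{\mu T}-1)\|y^{+}\|$. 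For any $(\varepsilon,T)$-chain $x_{0}=y,\,x_{1},\ldots,x_{k}=y$, the projections $y_{i}:=\pi^{+}x_{i}$ satisfy, since $\pi^{+}$ commutes with $e^{tA^{h}}$ and $y_{i}\in L^{+}$,
\[
\|y_{i+1}\|\;\geq\;\|e^{T_{i}A^{h}}y_{i}\|-\|\pi^{+}\|\varepsilon\;\geq\;e^{\mu T}\|y_{i}\|-\|\pi^{+}\|\varepsilon.
\]
A short induction then yields $\|y_{i}\|>\|y^{+}\|$ for all $i\geq 1$, contradicting $y_{k}=y^{+}$. The main obstacle I foresee is essentially notational, namely fixing the adapted inner products simultaneously on $L^{+}$ and $L^{-}$ so that the exponential bounds hold with a single constant $\mu$; once these norms are in place, both the projection step and the inductive estimate are mechanical.
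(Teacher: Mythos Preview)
Your proposal is correct and follows essentially the same route as the paper: Lemma~\ref{Lemma2_Eduardo} gives $L^{0}\subset$ chain recurrent set, and projecting chains along $L^{0}$ onto the hyperbolic part $L^{+}\oplus L^{-}$ reduces the reverse inclusion to the statement that a hyperbolic linear flow has only the origin as a chain recurrent point. The only difference is cosmetic: the paper picks a norm making $L^{0}$ orthogonal to $L^{+}\oplus L^{-}$ (so the projection has norm $\leq 1$) and then cites \cite{AntMV22} for the hyperbolic step, whereas you carry a Lipschitz constant and supply the hyperbolic argument explicitly via adapted norms and the inductive growth estimate.
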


\begin{proof}
By Lemma \ref{Lemma2_Eduardo} the set $L^{0}$ is chain transitive. It remains
to show that any chain recurrent point is in $L^{0}$. Since all norms in
$\mathbb{K}^{n}$ are equivalent we can assume without loss in generality that
$L^{+}\oplus L^{-}=\left(  L^{0}\right)  ^{\bot}$ which implies%
\begin{equation}
\left\Vert v\right\Vert \leq\left\Vert v+w\right\Vert \text{ for all }v\in
L^{+}\oplus L^{-},w\in L^{0}. \label{norm}%
\end{equation}
Let $x\in\mathbb{K}^{n}$ be a chain recurrent point. Thus for all
$\varepsilon,T>0$ there is an $(\varepsilon,T)$-chain $x_{0}=x,x_{1}%
,\ldots,x_{k}=x$ with $T_{0},T_{1},\ldots,T_{k-1}\geq T$. Write%
\[
x=v\oplus w\text{ and }x_{i}=v_{i}\oplus w_{i}\text{ with }v,v_{i}\in
L^{+}\oplus L^{-}\text{ and }w,w_{i}\in L^{0}.
\]
Then it follows from (\ref{norm}) that for all $i$%
\[
\left\Vert e^{T_{i}A}v_{i}-v_{i+1}\right\Vert \leq\left\Vert e^{T_{i}A}%
v_{i}+e^{T_{i}A}w_{i}-v_{i+1}-w_{i+1}\right\Vert =\left\Vert e^{T_{i}A}%
x_{i}-x_{i+1}\right\Vert <\varepsilon.
\]
This shows that $v_{0}=v,v_{1},\ldots,v_{k}=v$ is an $(\varepsilon,T)$-chain
from $v$ to $v$ for the flow restricted to $L^{+}\oplus L^{-}$. Since
$\varepsilon,T>0$ are arbitrary and this flow is hyperbolic, it follows that
$v=0$. In fact, Antunez, Mantovani, and Var\~{a}o \cite[Corollary
2.11]{AntMV22} shows this for discrete linear flows and similar arguments can
be used for continuous linear flows.
\end{proof}

Recall from Subsection \ref{Subsection2.1} the definition of chain limits set
$\Omega(x)$ for flows.

\begin{proposition}
\label{Proposition_A}For the flow $\psi$ on $\mathbb{K}^{n}$ the
center-unstable subspace and the center-stable subspace satisfy $L^{+,0}%
\subset\Omega(0)$ and $L^{-,0}\subset\Omega^{\ast}(0)$, respectively.
\end{proposition}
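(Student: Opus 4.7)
The plan is to show, for any $x\in L^{+,0}$ and any $\varepsilon,T>0$, that an $(\varepsilon,T)$-chain of $\psi$ from $0$ to $x$ exists; the inclusion $L^{-,0}\subset\Omega^{\ast}(0)$ then follows by applying the forward statement to the time-reversed flow $\psi^{\ast}(t,\cdot)=e^{-tA}$, whose Lyapunov decomposition has the unstable and stable subspaces of $A$ swapped, so that the center-unstable subspace for $\psi^{\ast}$ coincides with $L^{-}\oplus L^{0}=L^{-,0}$.

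For the construction, I would decompose $x=x^{+}+x^{0}$ with $x^{+}\in L^{+}$ and $x^{0}\in L^{0}$. Since $A_{|L^{+}}$ has spectrum in the open right half-plane, the backward flow contracts $L^{+}$ exponentially, so one can fix $T_{1}\geq T$ large enough that $\delta:=e^{-T_{1}A}x^{+}\in L^{+}$ satisfies $\Vert\delta\Vert<\varepsilon/2$. Set $w:=e^{-T_{1}A}x^{0}\in L^{0}$, so that $e^{T_{1}A}(w+\delta)=x$ exactly. Then apply Lemma \ref{Lemma2_Eduardo} to the restriction of $\psi$ to the invariant subspace $L^{0}$ (on which all eigenvalues of $A$ have vanishing real part) to obtain an $(\varepsilon/2,T)$-chain $y_{0}=0,y_{1},\ldots,y_{k}=w$ inside $L^{0}$ with times $S_{0},\ldots,S_{k-1}\geq T$. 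Splice this into a chain in $\mathbb{K}^{n}$ by setting $z_{i}:=y_{i}$ for $i<k$, $z_{k}:=w+\delta$, and $z_{k+1}:=x$, with times $S_{0},\ldots,S_{k-1},T_{1}$. The first $k-1$ jumps stay in $L^{0}$ and inherit the bound $\varepsilon/2$; the $k$-th jump is controlled via the triangle inequality by $d(\psi(S_{k-1},y_{k-1}),w)+\Vert\delta\Vert<\varepsilon/2+\varepsilon/2=\varepsilon$; and the final long step of duration $T_{1}$ reconstructs $x$ exactly, so $x\in\Omega(0)$.

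The main obstacle is that the naive scheme --- running a chain in $L^{0}$ up to $x^{0}$ and then separately appending a chain that expands into $L^{+}$ --- fails, because the $L^{+}$-flow is expanding while the $L^{0}$-flow typically grows polynomially, so one cannot simultaneously accommodate both components under a bounded error budget. The trick is to pull the entire target $x$ back by a long time $T_{1}$: the $L^{+}$-preimage $\delta$ becomes arbitrarily small (hence fits inside a single tiny jump), while the possibly large $L^{0}$-preimage $w$ is handled inside $L^{0}$ by Lemma \ref{Lemma2_Eduardo}. One long forward step of duration $T_{1}$ then recovers $x$ exactly, closing the chain.
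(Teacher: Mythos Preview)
Your proof is correct and follows essentially the same approach as the paper: decompose $x=x^{+}+x^{0}$, use the exponential contraction of the backward flow on $L^{+}$ to make the $L^{+}$-preimage small, invoke Lemma~\ref{Lemma2_Eduardo} to chain within $L^{0}$ to the $L^{0}$-preimage, insert the small $L^{+}$-perturbation via one jump, and flow forward to hit $x$ exactly; the $L^{-,0}$ inclusion follows by time reversal. The only difference is cosmetic bookkeeping: the paper pulls back by $2S$ and uses two final forward steps of length $S$ (with the jump between them), whereas you pull back by $T_{1}$ and combine the perturbation into the last jump of the $L^{0}$-chain before a single forward step of length $T_{1}$.
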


\begin{proof}
Let $x=x_{+}\oplus x_{0}\in L^{+,0}$ be arbitrary, where $x_{+}\in L^{+}$ and
$x_{0}\in L^{0}$, and fix $\varepsilon,T>0$. For $S\geq T$ sufficiently large,
$\Vert e^{-SA}x_{+}\Vert<\varepsilon$. Since $L^{0}$ is $A$-invariant it
follows that $e^{-2SA}x_{0}\in L^{0}$. Hence, by chain transitivity of $L^{0}%
$, there is an $(\varepsilon,T)$ chain $0=y_{0},y_{1},\ldots,y_{k}%
=e^{-2SA}x_{0}$ with times $T_{0},T_{1},\ldots,T_{k-1}\geq T$. Define an
$(\varepsilon,T)$-chain from $0$ to $x$ by $x_{0}=y_{0}=0,x_{1}=y_{1}%
,\ldots,x_{k}=y_{k}=e^{-2SA}x_{0},x_{k+1}=e^{-SA}\left(  x_{+}+x_{0}\right)
,x_{k+2}=x_{+}+x_{0}=x$ with times $T_{0},\ldots,T_{k-1},T_{k}=T_{k+1}=S$.
This holds since%
\begin{align*}
\left\Vert \psi(T_{i},x_{i})-x_{i+1}\right\Vert  &  =\left\Vert \psi
(T_{i},y_{i})-y_{i+1}\right\Vert <\varepsilon\text{ for }i=0,\ldots,k-1,\\
\left\Vert \psi(T_{k},x_{k})-x_{k+1}\right\Vert  &  =\left\Vert e^{SA}%
e^{-2SA}x_{0}-\left(  e^{-SA}x_{+}+e^{-SA}x_{0}\right)  \right\Vert
=\left\Vert e^{-SA}x_{+}\right\Vert <\varepsilon,\\
\left\Vert \psi(T_{k+1},x_{k+1})-x_{k+2}\right\Vert  &  =\left\Vert
e^{SA}\left(  e^{-SA}x_{+}+e^{-SA}x_{0}\right)  -\left(  x_{+}+x_{0}\right)
\right\Vert =0.
\end{align*}
The assertion for $L^{-,0}$ follows by considering the time reversed system.
\end{proof}

The following simple example illustrates that for $T\rightarrow\infty$ the
considered chains may become unbounded. Hence in the noncompact state space
$\mathbb{R}^{n}$ it is not sufficient to consider chain transitivity
restricted to compact subsets of $\mathbb{R}^{n}$.

\begin{example}
For the differential equation%
\[
\left(
\begin{array}
[c]{c}%
\dot{x}(t)\\
\dot{y}(t)
\end{array}
\right)  =\left(
\begin{array}
[c]{cc}%
0 & 1\\
0 & 0
\end{array}
\right)  \left(
\begin{array}
[c]{c}%
x(t)\\
y(t)
\end{array}
\right)
\]
the $x$-axis consists of equilibria. In the open upper half-plane all
trajectories move to the right on parallels to the $x$-axis, and in the lower
half-plane they move to the left. The state space $\mathbb{R}^{2}$ is chain
transitive, and for $T\rightarrow\infty$ the $(\varepsilon,T)$-chains become unbounded.
\end{example}

We return to control system (\ref{1}). The following proposition relates the
control set $D_{0}$ containing $0$ and the chain reachable and chain
controllable sets to the spectral subspaces of the homogeneous part $\dot
{x}=Ax$.

\begin{proposition}
\label{Proposition1_2_Eduardo}Consider a linear control system of the form
(\ref{1}).

(i) The inclusions $\overline{D_{0}}+L^{-,0}\subset\mathbf{C}^{c}(0)$ and
$\overline{D_{0}}+L^{+,0}\subset\mathbf{R}^{c}(0)$ hold.

(ii) The set $\overline{D_{0}}+L^{0}$ is chain controllable.
\end{proposition}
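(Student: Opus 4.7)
The plan rests on three ingredients. First, $\overline{D_0}$ is chain controllable and contains $0$, so every point of $\overline{D_0}$ is chain controllable to and from $0$; this follows from $D_0\subset\overline{\mathbf{R}(x)}$ for $x\in D_0$ (Definition \ref{Definition_control_sets}(ii)) together with Theorem \ref{Theorem_cs}(v), which lets us pad chain segments with controls keeping the trajectory in $\overline{D_0}$ for arbitrary prescribed time. Second, the affine dynamics satisfy the superposition identity $\varphi(t,x+y,u)=\varphi(t,x,u)+e^{At}y$, so any controlled chain from $x$ to $0$ is converted into a controlled chain from $x+y$ to $e^{A\tau}y$ of identical jump sizes, where $\tau$ is the total elapsed time. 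Third, Lemma \ref{Lemma2_Eduardo} applied to $A|_{L^0}$ yields chain transitivity of the homogeneous flow $\psi$ on $L^0$, and since $0\in\mathcal{U}$, these $\psi$-chains are genuine controlled chains of (\ref{1}).

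For the first inclusion of (i), fix $x\in\overline{D_0}$, write $y\in L^{-,0}$ as $y=y^0+y^-$ with $y^0\in L^0$ and $y^-\in L^-$, and fix $\varepsilon,T>0$. I choose $T^*\geq T$ so large that $\|e^{As}y^-\|<\varepsilon/2$ for every $s\geq T^*$, and pick a controlled $(\varepsilon/2,T^*)$-chain from $x$ to $0$ of total duration $\tau\geq T^*$. Superposition turns this into a controlled chain with the same jump sizes from $x+y$ to $e^{A\tau}y$. I then append one step under $u\equiv 0$ of length $T^*$, followed by a jump of magnitude $\|e^{A(\tau+T^*)}y^-\|<\varepsilon/2$ onto $e^{A(\tau+T^*)}y^0\in L^0$. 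Finally Lemma \ref{Lemma2_Eduardo} provides a $\psi$-chain within $L^0$ from $e^{A(\tau+T^*)}y^0$ to $0$ with parameters $(\varepsilon/2,T)$, which via $u\equiv 0$ is a controlled chain. Concatenating the three pieces delivers a controlled $(\varepsilon,T)$-chain from $x+y$ to $0$, hence $x+y\in\mathbf{C}^c(0)$. The dual inclusion $\overline{D_0}+L^{+,0}\subset\mathbf{R}^c(0)$ is obtained by applying the same argument to the time-reversed system $\dot x=-Ax-Bu$, whose center-stable subspace is $L^{+,0}$ and whose control set around $0$ has the same closure $\overline{D_0}$, and then invoking Proposition \ref{Proposition_ccs1}(i).

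For (ii), given $x_i\in\overline{D_0}$ and $y_i\in L^0\subset L^{-,0}\cap L^{+,0}$, part (i) supplies a controlled chain from $x_1+y_1$ to $0$ and another from $0$ to $x_2+y_2$; their concatenation proves chain controllability of $\overline{D_0}+L^0$. The main technical obstacle is the bookkeeping in the middle paragraph, namely coordinating a single waiting parameter $\tau$ that simultaneously damps the $L^-$-component of $y$, exceeds the prescribed time floor $T$, and keeps every jump (including the projection onto $L^0$) within the $\varepsilon$ budget.
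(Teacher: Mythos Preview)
Your argument is correct and close in spirit to the paper's, but differs in two implementation details worth noting. First, the paper steers \emph{exactly} from $x\in\mathrm{int}_{\mathcal{C}}D_0$ to $0$ via Corollary~\ref{Corollary_cs}(ii), so that $z=x+y$ lands in $L^{-,0}$ along a single trajectory; you instead take a controlled chain from $x\in\overline{D_0}$ to $0$ and transport it by superposition, which works equally well since the jumps are translated rigidly. Second, once in $L^{-,0}$ the paper invokes the packaged Proposition~\ref{Proposition_A} ($L^{-,0}\subset\Omega^{\ast}(0)$), whereas you split $y=y^0+y^-$, let $y^-$ decay, and apply Lemma~\ref{Lemma2_Eduardo} on $L^0$ directly---this is precisely the content of Proposition~\ref{Proposition_A} reproved inline. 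Your time-reversal derivation of the second inclusion is legitimate; the unstated fact that the time-reversed control set about $0$ has the same closure $\overline{D_0}$ follows from Corollary~\ref{Corollary_cs}(i), since both closures equal $(\overline{\mathbf{C}(0)}\cap L^{+})\oplus(L^{0}\cap\mathcal{C})\oplus(\overline{\mathbf{R}(0)}\cap L^{-})$.

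One citation needs fixing: Theorem~\ref{Theorem_cs}(v) assumes $(A,B)$ controllable, which Proposition~\ref{Proposition1_2_Eduardo} does not. For your first ingredient you should instead use Corollary~\ref{Corollary_cs}(ii) (from any $x\in D_0$ steer exactly to $0\in\mathrm{int}_{\mathcal{C}}D_0$, then remain at $0$ with $u\equiv 0$ to exceed any prescribed time) and then pass to $x\in\overline{D_0}$ via the closedness of $\mathbf{C}^{c}(0)$ from Proposition~\ref{Proposition_ccs1}(iii). With that adjustment the proof is complete.
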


\begin{proof}
(i) First we prove the following claim:%
\[
\mathbf{R}(z)\cap L^{-,0}\not =\varnothing\text{ for }z\in\mathrm{int}%
_{\mathcal{C}}D_{0}+L^{-,0}\text{ and }\mathbf{C}(z)\cap L^{+,0}%
\not =\varnothing\text{ for }z\in\mathrm{int}_{\mathcal{C}}D_{0}+L^{+,0}.
\]
Write $z=x+y$ with $x\in\mathrm{int}_{\mathcal{C}}D_{0}$ and $y\in L^{\pm,0}$.
By linearity it follows that%
\[
\varphi(t,z,u)=\varphi(t,x,u)+\varphi(t,y,0)\text{ for all }t\in
\mathbb{R}\text{ and }u\in\mathcal{U}.
\]
Since $0,x\in\mathrm{int}_{\mathcal{C}}D_{0}$ Corollary \ref{Corollary_cs}(ii)
implies that there are $T_{1},T_{2}\geq0$ and $u_{1},u_{2}\in\mathcal{U}$ with
$\varphi(T_{1},x,u_{1})=0$ and $\varphi(-T_{2},x,u_{2})=0$. Using that
$L^{\pm,0}$ is invariant for the control $u\equiv0$ one finds%
\begin{align*}
\varphi(T_{1},z,u_{1})  &  =\varphi(T_{1},y,0)\in L^{-,0}\text{ if }%
z\in\mathrm{int}_{\mathcal{C}}D_{0}+L^{-,0},\\
\varphi(-T_{2},z,u_{2})  &  =\varphi(-T_{2},y,0)\in L^{+,0}\text{ if }%
z\in\mathrm{int}_{\mathcal{C}}D_{0}+L^{+,0}.
\end{align*}
This implies the claim. Observe that we can take the times $T_{1},T_{2}$
arbitrarily large, when we extend $u_{1}$ and $u_{2}$ by the control
$u\equiv0$.

Next we show that $\overline{D_{0}}+$ $L^{-,0}\subset\mathbf{C}^{c}(0)$. Since
$\mathcal{C}$ is a closed subspace and the set $D_{0}$ is contained in the
closure of $\mathrm{int}_{\mathcal{A}}D_{0}$, it follows that $\overline
{D_{0}}=\overline{\mathrm{int}_{\mathcal{A}}D_{0}}$. Recalling that
$\mathbf{R}^{c}(0)$ is closed by Proposition \ref{Proposition_ccs1}(iii) one
sees that it suffices to prove that $\mathrm{int}_{\mathcal{A}}D_{0}%
+L^{-,0}\subset\mathbf{C}^{c}(x)$. Let $z\in\mathrm{int}_{\mathcal{A}}%
D_{0}+L^{-,0}$ and fix $\varepsilon,T>0$. By the claim there is a point $y\in
L^{-,0}$ which can be reached from $z$ at a time greater than $T$. By
Proposition \ref{Proposition_A}\textbf{ }there is a controlled $(\varepsilon
,T)$-chain (with control $u\equiv0$) from $y\in L^{-,0}$ to $0$. Concatenating
this with the trajectory segment from $z$ to $y$ one obtains a controlled
$(\varepsilon,T)$-chain from $z$ to $0$.

For the inclusion $\overline{D_{0}}+L^{+,0}\subset\mathbf{R}^{c}(0)$ it
similarly suffices to show that $\mathrm{int}_{\mathcal{A}}D_{0}+L^{+,0}$ is
contained in the closed set $\mathbf{R}^{c}(0)$. Let $z\in\mathrm{int}%
_{\mathcal{A}}D_{0}+L^{+,0}$ and fix $\varepsilon,T>0$. By the claim there is
a point $y\in L^{+,0}$ such that $z$ can be reached from $y$ at a time greater
than $T$. By Proposition \ref{Proposition_A} there is a controlled
$(\varepsilon,T)$-chain from $0$ to $y$. Concatenation of these controlled
chains yields a controlled $(\varepsilon,T)$-chain from $0$ to $z$.

(ii) This follows from assertion (i) since%
\[
\overline{D_{0}}+L^{0}=(\overline{D_{0}}+L^{+,0})\cap(\overline{D_{0}}%
+L^{-,0})\subset\mathbf{R}^{c}(0)\cap\mathbf{C}^{c}(0).
\]

\end{proof}

\section{The chain control set in $\mathbb{R}^{n}$\label{Section4}}

In this section we first recall results from Colonius and Santana
\cite{ColS23} on the projection to the Poincar\'{e} sphere. Together with
attractor-repeller decompositions this will enable us to prove that the chain
controllable set $\overline{D_{0}}+L_{0}$ in Proposition
\ref{Proposition1_2_Eduardo} actually coincides with the chain control set.

Linear control systems of the form (\ref{1}) on $\mathbb{R}^{n}$ can be lifted
to bilinear control systems with states $(x(t),y(t))$ in $\mathbb{R}^{n}%
\times\mathbb{R}=\mathbb{R}^{n+1}$ by%
\begin{equation}
\dot{x}(t)=Ax(t)+y(t)Bu(t),~\dot{y}(t)=0,\text{ }u\in\mathcal{U}.
\label{lift1}%
\end{equation}
The solutions for initial condition $\left(  x(0),y(0)\right)  =\left(
x_{0},r\right)  \in\mathbb{R}^{n}\times\mathbb{R}$, may be written as
\[
\varphi^{1}(t,x_{0},r,u)=\left(  e^{At}x_{0}+r\int_{0}^{t}e^{A(t-s)}%
Bu(s)ds,r\right)  ,\,t\geq0.
\]
Observe that for $r=0$ one has $\varphi^{1}(t,x,0,u)=\left(  \psi
(t,x),0\right)  $ and for $r=1$ one has $\varphi^{1}(t,x,1,u)=(\varphi
(t,x,u),1)$ for all $t\in\mathbb{R},x\in\mathbb{R}^{n},u\in\mathcal{U}$. Hence
on the hyperplane $\mathbb{R}^{n}\times\{1\}\subset\mathbb{R}^{n+1}$ one
obtains a copy of control system (\ref{1}).

Define subsets of $\mathbb{R}^{n+1}$, of projective space $\mathbb{P}^{n}$,
and of the unit sphere $\mathbb{S}^{n}$ by%
\begin{align}
\mathbb{R}^{n+1,0}  &  =\mathbb{R}^{n}\times\{0\},\,\mathbb{R}^{n+1,1}%
=\mathbb{R}^{n}\times(\mathbb{R}\setminus\{0\}),\nonumber\\
\mathbb{P}^{n,0}  &  =\{\mathbb{P}(x,0)\left\vert x\in\mathbb{R}^{n}\right.
\},\,\mathbb{P}^{n,1}=\{\mathbb{P}(x,r)\left\vert x\in\mathbb{R}^{n}%
,r\not =0\right.  \},\label{4.2}\\
\mathbb{S}^{n,+}  &  :=\left\{  (x,r)\in\mathbb{S}^{n}\left\vert
x\in\mathbb{R}^{n},r>0\right.  \right\}  ,\,\mathbb{S}^{n,0}=\{(x,0)\in
\mathbb{S}^{n}\left\vert x\in\mathbb{R}^{n}\right.  \},\nonumber
\end{align}
respectively. Observe that $\mathbb{P}^{n,1}=\{\mathbb{P}(x,1)\left\vert
x\in\mathbb{R}^{n}\right.  \}$ can be identified with the \textquotedblleft
northern\textquotedblright\ hemisphere $\mathbb{S}^{n,+}$ of $\mathbb{S}^{n}$
and $\mathbb{P}^{n,0}$ is the projection of the equator $\mathbb{S}^{n,0}$.

\begin{definition}
The projective Poincar\'{e} sphere is $\mathbb{P}^{n}$ and the projective
Poincar\'{e} bundle is $\mathcal{U}\times\mathbb{P}^{n}$.
\end{definition}

Control system (\ref{lift1}) induces a control flow $\Phi^{1}$ on
$\mathcal{U}\times\mathbb{R}^{n+1}$ defined by%
\begin{equation}
\Phi_{t}^{1}(u,x,r)=(u(t+\cdot),\varphi^{1}(t,x,r,u)),t\in\mathbb{R}%
,(x,r)\mathbb{\in R}^{n+1},\,u\in\mathcal{U}. \label{Fi_1}%
\end{equation}
The maps between the fibers $\{u\}\times\mathbb{R}^{n+1}$ are linear, hence
$\Phi^{1}$ is a linear flow: For every $u\in\mathcal{U}$ and $\alpha,\beta
\in\mathbb{R},(x_{1},r_{1}),(x_{2},r_{2})\in\mathbb{R}^{n}\times\mathbb{R}$
\[
\alpha\varphi^{1}(t,x_{1},r_{1},u)+\beta\varphi^{1}(t,x_{2},r_{2}%
,u)=\varphi^{1}(t,\alpha x_{1}+\beta x_{2},\alpha r_{1}+\beta r_{2},u).
\]
The projection of system (\ref{lift1}) to projective space yields a projective
control flow $\mathbb{P}\Phi^{1}$ on the projective Poincar\'{e} bundle
$\mathcal{U}\times\mathbb{P}^{n}$. The subsets $\mathcal{U}\times
\mathbb{P}^{n,0}$ and $\mathcal{U}\times\mathbb{P}^{n,1}$ are invariant under
the flow $\mathbb{P}\Phi^{1}$. The following proposition shows some relations
between the control flows on $\mathcal{U}\times\mathbb{R}^{n}$ and on the
projective Poincar\'{e} bundle.

\begin{proposition}
\label{Proposition_e}(i) Every $(u,x)\in\mathcal{U}\times\mathbb{R}^{n}$
satisfies the equality $\Phi_{t}^{1}(u,x,0)=(u(t+\cdot),\varphi^{1}%
(t,x,0,u))=(u(t+\cdot),\psi(t,x),0),\allowbreak t\in\mathbb{R}$, and the
projective map%
\begin{equation}
h^{0}:\mathcal{U}\times\mathbb{P}^{n-1}\rightarrow\mathcal{U}\times
\mathbb{P}^{n,0},h^{0}(u,\mathbb{P}x)=(u,\mathbb{P}(x,0)), \label{h0}%
\end{equation}
is a conjugacy of the flow on $\mathcal{U}\times\mathbb{P}^{n-1}$ given by
$(u,p)\mapsto(u(t+\cdot),\mathbb{P}\psi(t,p))$ and the flow $\mathbb{P}%
\Phi^{1}$ restricted to $\mathcal{U}\times\mathbb{P}^{n,0}$.

(ii) The map%
\begin{equation}
h^{1}:\mathcal{U}\times\mathbb{R}^{n}\rightarrow\mathcal{U}\times
\mathbb{P}^{n,1},(u,x)\mapsto(u,\mathbb{P}(x,1)), \label{h1}%
\end{equation}
is a conjugacy of the flows $\Phi$ on $\mathcal{U}\times\mathbb{R}^{n}$ and
$\mathbb{P}\Phi^{1}$ restricted to $\mathcal{U}\times\mathbb{P}^{n,1}$.

(iii) For $\varepsilon,T>0$ any $(\varepsilon,T)$-chain in $\mathcal{U}%
\times\mathbb{R}^{n}$ is mapped by $h^{1}$ onto a $(2\varepsilon,T)$-chain in
$\mathcal{U}\times\mathbb{P}^{n,1}$, hence any chain transitive set
$C\subset\mathcal{U}\times\mathbb{R}^{n}$ is mapped onto a chain transitive
set $h^{1}(C)\subset\mathcal{U}\times\mathbb{P}^{n,1}$.

(iv) For a subset $C\subset\mathcal{U}\times\mathbb{R}^{n}$ the set
$\{x\in\mathbb{R}^{n}\left\vert (u,x)\in C\text{ for some }u\in\mathcal{U}%
\right.  \}$ is bounded if and only if $\overline{h^{1}(C)}\cap(\mathcal{U}%
\times\mathbb{P}^{n,0})=\varnothing$.
\end{proposition}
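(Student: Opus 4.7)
The proof breaks into four largely independent parts, each resting on the explicit solution formula
\[
\varphi^{1}(t,x_{0},r,u) = \left(e^{At}x_{0} + r\int_{0}^{t}e^{A(t-s)}Bu(s)\,ds,\; r\right).
\]
For (i), I would substitute $r = 0$ to obtain immediately $\varphi^{1}(t,x,0,u) = (\psi(t,x),0)$. The map $h^{0}$ is a homeomorphism by the very definition of $\mathbb{P}^{n,0}$ in (\ref{4.2}): the projection $\mathbb{P}x \mapsto \mathbb{P}(x,0)$ identifies $\mathbb{P}^{n-1}$ with $\mathbb{P}^{n,0}$. The conjugacy equation then reduces to the formula just obtained together with linearity of $\mathbb{P}$. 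Part (ii) is analogous: setting $r = 1$ gives $\varphi^{1}(t,x,1,u) = (\varphi(t,x,u),1)$, and $h^{1}$ is a homeomorphism because every class in $\mathbb{P}^{n,1}$ has a unique representative with last coordinate $1$; the inverse $\mathbb{P}(x,r)\mapsto x/r$ is continuous wherever $r\neq 0$, and the conjugacy equation unwinds from the formula.

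Part (iii) requires the only genuine estimate, which I expect to be the main technical point. The goal is to bound the projective metric (\ref{metric_P}) by the Euclidean metric. For $a,y\in\mathbb{R}^{n}$ set $\tilde{a}=(a,1)$ and $\tilde{y}=(y,1)$. The identity
\[
\|\tilde{y}\|\tilde{a} - \|\tilde{a}\|\tilde{y} = \bigl(\|\tilde{y}\| - \|\tilde{a}\|\bigr)\tilde{a} + \|\tilde{a}\|(\tilde{a} - \tilde{y})
\]
combined with $\bigl|\|\tilde{y}\| - \|\tilde{a}\|\bigr| \leq \|\tilde{a} - \tilde{y}\|$ and $\|\tilde{a}\|,\|\tilde{y}\| \geq 1$ yields
\[
\left\|\frac{\tilde{a}}{\|\tilde{a}\|} - \frac{\tilde{y}}{\|\tilde{y}\|}\right\| \leq \frac{2\|\tilde{a}\|\,\|\tilde{a}-\tilde{y}\|}{\|\tilde{a}\|\,\|\tilde{y}\|} \leq 2\|a - y\|.
\]
Since the metric on $\mathbb{P}^{n}$ is the minimum of two such expressions, $d(\mathbb{P}(a,1), \mathbb{P}(y,1)) \leq 2\|a - y\|$. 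Given an $(\varepsilon,T)$-chain in $\mathcal{U} \times \mathbb{R}^{n}$, applying this inequality with $a = \varphi(T_i, x_i, u_i)$ and $y$ the $\mathbb{R}^n$-component of the next chain point, and noting that the $\mathcal{U}$-component of the product metric is untouched by $h^{1}$, produces a $(2\varepsilon,T)$-chain in $\mathcal{U} \times \mathbb{P}^{n,1}$. Chain transitivity of a set $C$ then transfers to $h^{1}(C)$ because $\varepsilon$ is arbitrary (apply the argument with $\varepsilon/2$).

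For part (iv), the forward direction is routine: if the projection of $C$ to $\mathbb{R}^{n}$ lies in a ball of radius $R$, then $h^{1}(C) \subset \mathcal{U} \times K$ where $K = \{\mathbb{P}(x,1) : \|x\| \leq R\}$ is compact and disjoint from $\mathbb{P}^{n,0}$; hence so is its closure. For the converse I argue by contraposition. Unboundedness gives $(u_{k},x_{k}) \in C$ with $\|x_{k}\| \to \infty$; compactness of $\mathcal{U}$ and of the unit sphere in $\mathbb{R}^{n}$ let us pass to a subsequence with $u_{k} \to u$ in $\mathcal{U}$ and $x_{k}/\|x_{k}\| \to v$ with $\|v\|=1$. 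Then
\[
\mathbb{P}(x_{k}, 1) = \mathbb{P}\bigl(x_{k}/\|x_{k}\|,\, 1/\|x_{k}\|\bigr) \;\longrightarrow\; \mathbb{P}(v, 0) \in \mathbb{P}^{n,0},
\]
so $(u, \mathbb{P}(v,0)) \in \overline{h^{1}(C)} \cap (\mathcal{U} \times \mathbb{P}^{n,0})$, contradicting emptiness. The genuine content is concentrated in the Lipschitz estimate of part (iii); the remaining parts are bookkeeping with the explicit formulas and basic compactness.
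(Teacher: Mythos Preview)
Your argument is correct in all four parts. The paper itself proves (i) by the same one-line substitution $r=0$ and then disposes of (ii)--(iv) by citing \cite[Proposition 9]{ColS23} rather than writing anything out; so there is no independent approach to compare against. Your explicit Lipschitz estimate $d(\mathbb{P}(a,1),\mathbb{P}(y,1))\leq 2\|a-y\|$ in part (iii) is exactly the content that the citation hides, and your compactness argument for (iv) is the natural one.
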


\begin{proof}
Assertion (i) holds, since $\varphi^{1}(t,x,0,u)=(\psi(t,x),0),t\in
\mathbb{R},x\in\mathbb{R}^{n},u\in\mathcal{U}$, implying (\ref{h0}) for the
corresponding projective flows. Assertions (ii)-(iv) are a special case of
Colonius and Santana \cite[Proposition 9]{ColS23}.
\end{proof}

Applying Selgrade's theorem, Theorem \ref{Theorem_Selgrade}, to lifted control
flows $\Phi^{1}$ for general split affine control systems one obtains the
following theorem; cf. \cite[Corollary 32, Remark 34]{ColS23}. Below we will
sharpen these results for linear control systems.

\begin{theorem}
\label{Theorem_Selgrade1}Consider a linear control system of the form
(\ref{1}) on $\mathbb{R}^{n}$ and its lift (\ref{lift1}) to $\mathbb{R}^{n+1}$.

(i) The linear flow $(\theta_{t}u,\psi(t,x))$ on $\mathcal{U}\times
\mathbb{R}^{n}$ defined by the linear part $\dot{x}=Ax$ of (\ref{1}) has the
Selgrade decomposition%
\[
\mathcal{U}\times\mathbb{R}^{n}=\bigoplus\nolimits_{i}\mathcal{V}_{i}\text{
with }\mathcal{V}_{i}=\mathcal{U}\times L(\lambda_{i}),
\]
where $L(\lambda_{i})$ are the Lyapunov spaces of $A$.

(ii) For the lifted flow $\Phi^{1}$ in (\ref{Fi_1}) the Selgrade decomposition
has the form
\begin{equation}
\mathcal{U}\times\mathbb{R}^{n+1}=\mathcal{V}_{1}^{\infty}\oplus\cdots
\oplus\mathcal{V}_{\ell^{+}}^{\infty}\oplus\mathcal{V}_{c}^{1}\oplus
\mathcal{V}_{\ell^{+}+\ell^{0}+1}^{\infty}\oplus\cdots\oplus\mathcal{V}_{\ell
}^{\infty}, \label{Sel4}%
\end{equation}
for some numbers $\ell^{+},\ell^{0}\geq0$ with $\ell^{+}+\ell^{0}\leq\ell$
and
\[
\mathcal{V}_{i}^{\infty}:=\mathcal{V}_{i}\times0=\mathcal{U}\times
L(\lambda_{i})\times0\subset\mathcal{U}\times\mathbb{R}^{n+1,0},i\in
\{1,\ldots,\ell^{+}\}\cup\{\ell^{+}+\ell^{0}+1,\ldots,\ell\}.
\]

(iii) The central Selgrade bundle $\mathcal{V}_{c}^{1}$ satisfies
\[
\mathcal{V}_{c}^{1}\cap\left(  \mathcal{U}\times\mathbb{R}^{n}\times0\right)
=\bigoplus_{i=\ell^{+}+1}^{i=\ell^{+}+\ell^{0}}\mathcal{V}_{i}^{\infty
}:=\mathcal{V}_{c}^{\infty}\text{ and }\dim\mathcal{V}_{c}^{1}=1+\dim
\mathcal{V}_{c}^{\infty}.
\]
The indices $i\in\{\ell^{+}+1,\ldots,\ell^{+}+\ell^{0}\}$ are the indices such
that $h^{1}(\mathcal{U}\times L(\lambda_{i}))=\mathcal{U}\times\mathbb{P}%
\left(  L(\lambda_{i})\times\left\{  1\right\}  \right)  \subset
\mathcal{U}\times\mathbb{P}^{n}$ is a chain transitive set.

(iv) If $A$ is hyperbolic, i.e., $\mathrm{spec}A\cap\imath\mathbb{R}%
=\varnothing$, the central Selgrade bundle is the line bundle%
\[
\mathcal{V}_{c}^{1}=\left\{  (u,-re(u,0),r)\in\mathcal{U}\times\mathbb{R}%
^{n}\times\mathbb{R}\left\vert u\in\mathcal{U},r\in\mathbb{R}\right.
\right\}  ,
\]
where $e(u,t),t\in\mathbb{R}$, is the unique bounded solution of (\ref{1}) for
$u\in\mathcal{U}$, and the projection $\mathcal{M}_{c}^{1}=\mathbb{P}%
\mathcal{V}_{c}^{1}$ satisfies $\mathcal{M}_{c}^{1}\subset\mathbb{P}^{n,1}$.
\end{theorem}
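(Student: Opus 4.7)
The plan is to apply Selgrade's theorem (Theorem~\ref{Theorem_Selgrade}) to two linear flows and then track how the Selgrade decomposition on $\mathcal{U}\times\mathbb{R}^{n+1}$ extends the one on $\mathcal{U}\times\mathbb{R}^{n}$. For part~(i), I would first note that the shift flow $\theta$ on $\mathcal{U}$ is chain transitive (standard for the weak$^{\ast}$ topology on a convex control range containing $0$), so Theorem~\ref{Theorem_Selgrade} applies. Since $\dot{x}=Ax$ does not depend on $u$, the subbundles $\mathcal{V}_{i}=\mathcal{U}\times L(\lambda_{i})$ are invariant under $(u,x)\mapsto(\theta_{t}u,e^{At}x)$, and the classical estimate $\|e^{At}v\|\asymp e^{\lambda_{i}t}\|v\|$ (up to polynomial factors) for $v\in L(\lambda_{i})$ yields the exponential separation~(\ref{exp_sep}) between distinct Lyapunov spaces. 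By the uniqueness part of Selgrade's theorem, the $\mathcal{V}_{i}$ are therefore the Selgrade bundles.

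For part~(ii), I would apply Theorem~\ref{Theorem_Selgrade} to $\Phi^{1}$ on $\mathcal{U}\times\mathbb{R}^{n+1}$. The subbundle $\mathcal{U}\times\mathbb{R}^{n+1,0}$ is $\Phi^{1}$-invariant (since $\dot{y}\equiv 0$), and Proposition~\ref{Proposition_e}(i) shows that its restricted flow is conjugate to the flow from part~(i). Hence any Selgrade bundle of $\Phi^{1}$ lying inside the equator must be a union of the bundles $\mathcal{V}_{i}^{\infty}$. The quotient by $\mathcal{U}\times\mathbb{R}^{n+1,0}$ is one-dimensional with trivial dynamics, so dimension counting forces exactly one Selgrade bundle $\mathcal{V}_{c}^{1}$ to project nontrivially to the last coordinate, yielding decomposition~(\ref{Sel4}) and $\dim\mathcal{V}_{c}^{1}=1+\dim\mathcal{V}_{c}^{\infty}$. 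For part~(iii), the projection $\mathbb{P}\mathcal{V}_{c}^{1}$ is the unique chain recurrent component of $\mathbb{P}\Phi^{1}$ meeting $\mathcal{U}\times\mathbb{P}^{n,1}$; an equatorial Selgrade bundle $\mathcal{V}_{i}^{\infty}$ is absorbed into $\mathcal{V}_{c}^{1}$ exactly when its projective image is chain-connected in $\mathcal{U}\times\mathbb{P}^{n}$ to points with nonzero last coordinate. Using the conjugacy and the chain-compatibility in Proposition~\ref{Proposition_e}(ii)--(iii), this absorption condition translates into the stated criterion that $h^{1}(\mathcal{U}\times L(\lambda_{i}))$ is chain transitive in $\mathcal{U}\times\mathbb{P}^{n}$.

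For part~(iv), hyperbolicity gives $L^{0}=0$, hence $\mathcal{V}_{c}^{\infty}=0$ and $\dim\mathcal{V}_{c}^{1}=1$. The unique bounded solution of~(\ref{1}) for a fixed $u\in\mathcal{U}$ can be written as
\[
e(u,t)=-\int_{t}^{\infty}e^{A(t-s)}\pi^{+}Bu(s)\,ds+\int_{-\infty}^{t}e^{A(t-s)}\pi^{-}Bu(s)\,ds,
\]
with uniqueness following from the absence of nontrivial bounded solutions of the hyperbolic homogeneous equation. The cocycle identity $e(\theta_{t}u,0)=e(u,t)$ then shows that $\{(u,-re(u,0),r):u\in\mathcal{U},r\in\mathbb{R}\}$ is a one-dimensional $\Phi^{1}$-invariant subbundle not contained in the equator; being such, it must coincide with $\mathcal{V}_{c}^{1}$, and its image under $\mathbb{P}$ lies in $\mathbb{P}^{n,1}$ because the last coordinate is nonzero on nonzero fibers. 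The main obstacle is part~(iii): pinning down precisely which equatorial bundles get absorbed into $\mathcal{V}_{c}^{1}$ requires genuinely global chain-recurrence arguments on $\mathcal{U}\times\mathbb{P}^{n}$ rather than separate analyses on the invariant equator and hemisphere. The remainder is dimension counting together with standard linear-ODE estimates.
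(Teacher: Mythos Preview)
The paper does not prove Theorem~\ref{Theorem_Selgrade1} in the text; it is imported wholesale from \cite[Corollary~32, Remark~34]{ColS23}, where the result is established for general split affine flows. So there is no in-paper argument to compare against---only a citation. Your outline for parts (i) and (iv) is correct and essentially complete: chain transitivity of the shift together with the exponential separation of the Lyapunov spaces $L(\lambda_i)$ gives (i), and in the hyperbolic case the explicit bounded solution produces a one-dimensional invariant subbundle transversal to the equator, which must then equal $\mathcal{V}_c^1$ once $\dim\mathcal{V}_c^1=1$ is known.

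The real gaps are in (ii)--(iii), and they are more serious than you indicate. First, your assertion that ``dimension counting forces exactly one Selgrade bundle $\mathcal{V}_c^1$ to project nontrivially to the last coordinate'' does not hold: the quotient by the equator being one-dimensional shows that \emph{at least} one Selgrade bundle meets $\mathcal{U}\times\mathbb{R}^{n+1,1}$, but a pure dimension count does not rule out two or more. Uniqueness needs the additional input that the last coordinate is preserved by $\Phi^1$, combined with the linear ordering from exponential separation. Second, you claim that an equatorial Selgrade bundle of $\Phi^1$ is ``a union of the bundles $\mathcal{V}_i^\infty$'', but the theorem asserts each is a \emph{single} $\mathcal{V}_i^\infty$; this is what the form~(\ref{Sel4}) actually says. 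Your justification via the conjugacy in Proposition~\ref{Proposition_e}(i) only controls chains that remain in the equator, whereas a chain recurrent component of $\mathbb{P}\Phi^1$ lying in $\mathcal{U}\times\mathbb{P}^{n,0}$ could a priori be connected by chains passing through $\mathcal{U}\times\mathbb{P}^{n,1}$, possibly merging several $\mathbb{P}\mathcal{V}_i^\infty$. Ruling this out, and showing that the absorbed indices form a \emph{consecutive} block $\{\ell^{+}+1,\ldots,\ell^{+}+\ell^{0}\}$, is exactly the global chain-recurrence argument you flag as the main obstacle---and it is precisely for this reason that the paper defers the entire proof to \cite{ColS23}.
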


Theorem \ref{Theorem_Selgrade1} implies the following consequences for chain
control sets (cf. \cite[Theorem 35]{ColS23}).

\begin{corollary}
\label{Corollary_splitE}Consider a linear control system of the form (\ref{1})
on $\mathbb{R}^{n}$.

(i) For the induced control system on the projective Poincar\'{e} sphere
$\mathbb{P}^{n}$ there exists a unique chain control set $E_{c}^{1}$ with
$E_{c}^{1}\cap\mathbb{P}^{n,1}\not =\varnothing$. It is given by%
\[
E_{c}^{1}=\{\mathbb{P}x\in\mathbb{P}^{n}\left\vert \exists u\in\mathcal{U}%
:(u,\mathbb{P}x)\in\mathcal{M}_{c}^{1}\right.  \}.
\]
Furthermore, $E_{c}^{1}$ contains $\mathbb{P}\left(  L(0)\times0\right)  $ and
the image~$\mathbb{P}\left(  0_{n},1\right)  $ of the north pole $(0_{n},1)$.

(ii) For the unique chain control set $E$ in $\mathbb{R}^{n}$ of the linear
control system (\ref{1}), the image $\mathbb{P}\left(  E\times\left\{
1\right\}  \right)  $ in the projective Poincar\'{e} sphere $\mathbb{P}^{n}$
is contained in $E_{c}^{1}$.

(iii) If $A$ is hyperbolic, then the chain control set $E_{c}^{1}$ equals
$\mathbb{P}(E\times\left\{  1\right\}  )$, it is a compact subset of
$\mathbb{P}^{n,1}$, and for every $u\in\mathcal{U}$ there is a unique element
$x\in E$ with $\varphi(t,x,u)\in E$ for all $t\in\mathbb{R}$.
\end{corollary}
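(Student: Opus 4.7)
The plan is to deduce all three parts of the corollary from Theorem \ref{Theorem_Selgrade1} by combining Selgrade's theorem, which identifies Morse sets with maximal chain transitive sets, with Theorem \ref{Theorem_equivalence}, which translates between maximal chain transitive sets of the control flow and chain control sets in the state space. The conjugacy $h^1$ of Proposition \ref{Proposition_e}(ii) is the bridge between the chain behavior in $\mathcal{U}\times\mathbb{R}^n$ and in $\mathcal{U}\times\mathbb{P}^{n,1}$.

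For assertion (i), I would first apply Selgrade's theorem to the lifted linear flow $\Phi^1$: the Morse sets of $\mathbb{P}\Phi^1$ are exactly the projections $\mathbb{P}\mathcal{V}_i^\infty$ and $\mathbb{P}\mathcal{V}_c^1 = \mathcal{M}_c^1$ of the Selgrade bundles in decomposition (\ref{Sel4}), and each is a maximal chain transitive set. From the structure in Theorem \ref{Theorem_Selgrade1}(ii), only $\mathcal{M}_c^1$ intersects $\mathcal{U}\times\mathbb{P}^{n,1}$; the remaining Morse sets lie entirely in $\mathcal{U}\times\mathbb{P}^{n,0}$. Applying Theorem \ref{Theorem_equivalence} then yields the formula for $E_c^1$ and the uniqueness of a chain control set meeting $\mathbb{P}^{n,1}$. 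The inclusion $\mathbb{P}(L(0)\times 0)\subset E_c^1$ follows from the identity $\mathcal{V}_c^1\cap(\mathcal{U}\times\mathbb{R}^n\times 0)=\mathcal{V}_c^\infty$ in Theorem \ref{Theorem_Selgrade1}(iii). For the north pole, I would observe that $(0_n,1)$ is a fixed point of (\ref{lift1}) under $u\equiv 0$, so $(0_{\mathcal{U}},0_n,1)$ is fixed under $\Phi^1$; its projective image is chain recurrent and lies in $\mathcal{U}\times\mathbb{P}^{n,1}$, forcing it into $\mathcal{M}_c^1$.

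For assertion (ii), I would lift $E$ via Theorem \ref{Theorem_equivalence} to the maximal chain transitive set $\mathcal{E}=\{(u,x)\in\mathcal{U}\times\mathbb{R}^n\mid \varphi(t,x,u)\in E\text{ for all }t\in\mathbb{R}\}$ of the control flow $\Phi$. By Proposition \ref{Proposition_e}(iii), $h^1(\mathcal{E})$ is a chain transitive subset of $\mathcal{U}\times\mathbb{P}^{n,1}$, hence sits inside a maximal chain transitive set of $\mathbb{P}\Phi^1$ that meets $\mathcal{U}\times\mathbb{P}^{n,1}$. By the uniqueness from (i), this maximal set must be $\mathcal{M}_c^1$. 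Projecting to $\mathbb{P}^n$ gives $\mathbb{P}(E\times\{1\})\subset E_c^1$.

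For assertion (iii), hyperbolicity of $A$ forces $L(0)=\{0\}$ and, by Theorem \ref{Theorem_Selgrade1}(iv), $\mathcal{M}_c^1\subset\mathcal{U}\times\mathbb{P}^{n,1}$. The conjugacy $h^1$ of Proposition \ref{Proposition_e}(ii) then makes $(h^1)^{-1}(\mathcal{M}_c^1)$ a maximal chain transitive set of $\Phi$ in $\mathcal{U}\times\mathbb{R}^n$. By (ii) it contains $\mathcal{E}$, and since both are maximal we get $\mathcal{E}=(h^1)^{-1}(\mathcal{M}_c^1)$, which projects to $E_c^1=\mathbb{P}(E\times\{1\})$. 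Compactness is inherited from the compactness of $\mathcal{M}_c^1$ (a chain recurrent component of the compact space $\mathcal{U}\times\mathbb{P}^n$) under continuous projection. For uniqueness of $x$ given $u$: since $\mathcal{V}_c^1$ is a line bundle, the fiber $(\{u\}\times\mathbb{P}^n)\cap\mathcal{M}_c^1$ is a single point; pulling back by $(h^1)^{-1}$, which is a fiberwise homeomorphism identifying $\mathbb{R}^n$ with $\mathbb{P}^{n,1}$, shows that $\mathcal{E}$ has a single point in each fiber $\{u\}\times\mathbb{R}^n$, giving the unique $x\in E$ with $\varphi(t,x,u)\in E$ for all $t$.

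The main obstacle I anticipate is the uniqueness step in (i), namely carefully justifying that $\mathcal{M}_c^1$ is the \emph{unique} maximal chain transitive set of $\mathbb{P}\Phi^1$ that meets $\mathcal{U}\times\mathbb{P}^{n,1}$; this is where one must combine the fact that Selgrade's theorem exhausts all chain recurrent components with the specific placement of the bundles $\mathcal{V}_i^\infty$ inside $\mathcal{U}\times\mathbb{R}^{n+1,0}$ recorded in (\ref{Sel4}). Once this is in hand, the remaining parts unfold mechanically via the conjugacy $h^1$ and Theorem \ref{Theorem_equivalence}.
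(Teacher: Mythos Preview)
Your overall strategy---reading off the Morse decomposition from the Selgrade decomposition in Theorem~\ref{Theorem_Selgrade1}, then converting between maximal chain transitive sets and chain control sets via Theorem~\ref{Theorem_equivalence}, with the conjugacy $h^{1}$ of Proposition~\ref{Proposition_e} as the bridge---is exactly the route the paper takes (the paper does not spell out a proof but simply invokes Theorem~\ref{Theorem_Selgrade1} and \cite[Theorem~35]{ColS23}).

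There is one genuine gap in your argument for part~(i). You write that the inclusion $\mathbb{P}(L(0)\times 0)\subset E_{c}^{1}$ ``follows from the identity $\mathcal{V}_{c}^{1}\cap(\mathcal{U}\times\mathbb{R}^{n}\times 0)=\mathcal{V}_{c}^{\infty}$ in Theorem~\ref{Theorem_Selgrade1}(iii).'' But Theorem~\ref{Theorem_Selgrade1}(iii) only tells you that $\mathcal{V}_{c}^{\infty}$ is the direct sum of those $\mathcal{V}_{i}^{\infty}=\mathcal{U}\times L(\lambda_{i})\times 0$ for which $h^{1}(\mathcal{U}\times L(\lambda_{i}))$ happens to be chain transitive; it does not by itself single out the index with $\lambda_{i}=0$. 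To conclude $\mathcal{V}_{0}^{\infty}=\mathcal{U}\times L(0)\times 0\subset\mathcal{V}_{c}^{1}$ you must actually verify that $\mathcal{U}\times\mathbb{P}(L(0)\times\{1\})$ is chain transitive, and this is precisely the content of Lemma~\ref{Lemma2_Eduardo} (chain transitivity of $L^{0}$ for the flow $\psi$) together with the product argument of Proposition~\ref{Proposition_in}. So this step needs that extra input, not just the formal identity in Theorem~\ref{Theorem_Selgrade1}(iii).

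A smaller point for part~(iii): when you assert that $(h^{1})^{-1}(\mathcal{M}_{c}^{1})$ is a maximal chain transitive set of $\Phi$, note that Proposition~\ref{Proposition_e}(iii) only gives the forward direction (chains in $\mathcal{U}\times\mathbb{R}^{n}$ map to chains in $\mathcal{U}\times\mathbb{P}^{n,1}$). The reverse implication works here because $\mathcal{M}_{c}^{1}$ is compact and contained in $\mathcal{U}\times\mathbb{P}^{n,1}$, so $(h^{1})^{-1}$ is uniformly continuous on it; you should say so explicitly. Maximality then follows exactly as you indicate, by pushing any larger chain transitive set forward via Proposition~\ref{Proposition_e}(iii) and invoking maximality of $\mathcal{M}_{c}^{1}$.
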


The next proposition identifies a subbundle occurring in assertion (iii) of
Theorem \ref{Theorem_Selgrade1} (in Theorem \ref{Theorem_central}\textbf{ }we
will show that it is the only one). It is the bundle for the center Lyapunov
space of $A$ and it is convenient to denote it by $\mathcal{V}_{0}%
:=\mathcal{U}\times L(0).$

\begin{proposition}
\label{Proposition_in}The inclusion $\mathcal{V}_{0}^{\infty}:=\mathcal{U}%
\mathbf{\times}L(0)\times0\subset\mathcal{V}_{c}^{1}$ holds$.$
\end{proposition}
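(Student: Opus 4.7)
The plan is to first verify $\{0_{\mathcal{U}}\}\times L(0)\times\{0\}\subset\mathcal{V}_c^1$ and then extend the inclusion to all of $\mathcal{U}$ by invoking the product structure of $\mathcal{V}_c^{\infty}$ from Theorem \ref{Theorem_Selgrade1}(iii). That statement gives
\[
\mathcal{V}_c^1\cap(\mathcal{U}\times\mathbb{R}^n\times\{0\})=\bigoplus_{i=\ell^{+}+1}^{\ell^{+}+\ell^{0}}\mathcal{V}_i^{\infty},
\]
where each $\mathcal{V}_i^{\infty}=\mathcal{U}\times L(\lambda_i)\times\{0\}$ is a trivial product bundle. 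Because $\mathbb{R}^n=\bigoplus_{i} L(\lambda_i)$ is a direct sum of Lyapunov spaces, once the single-fiber inclusion is known, the index $i_{0}$ with $\lambda_{i_{0}}=0$ must belong to $\{\ell^{+}+1,\ldots,\ell^{+}+\ell^{0}\}$, whence $\mathcal{V}_0^{\infty}=\mathcal{V}_{i_0}^{\infty}\subset\mathcal{V}_c^1$; the case $L(0)=\{0\}$ is trivial.

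For the single-fiber inclusion I proceed through the projective Poincar\'{e} sphere. By Proposition \ref{Proposition1_2_Eduardo}(ii) the set $\overline{D_0}+L^0$ is chain controllable, hence by Proposition \ref{Proposition_ccs1}(ii) contained in a chain control set $E$. For every $v\in L(0)$ the trajectory $\varphi(t,v,0_{\mathcal{U}})=e^{At}v$ remains in $L(0)\subset E$ for all $t\in\mathbb{R}$, so Theorem \ref{Theorem_equivalence} yields $\{0_{\mathcal{U}}\}\times L(0)\subset\mathcal{E}$ for the maximal chain transitive lift $\mathcal{E}$ of $E$ under $\Phi$. Proposition \ref{Proposition_e}(iii) transports $\mathcal{E}$ to a chain transitive set $h^1(\mathcal{E})\subset\mathcal{U}\times\mathbb{P}^{n,1}$ for $\mathbb{P}\Phi^1$, and taking closure preserves chain transitivity, so $\overline{h^1(\mathcal{E})}\subset\mathcal{U}\times\mathbb{P}^n$ is chain transitive as well.

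Next, for $v\in L(0)\setminus\{0\}$ one has $h^1(0_{\mathcal{U}},\lambda v)=(0_{\mathcal{U}},\mathbb{P}(v,1/\lambda))\to(0_{\mathcal{U}},\mathbb{P}(v,0))$ as $\lambda\to\infty$, so each equatorial point $(0_{\mathcal{U}},\mathbb{P}(v,0))$ belongs to $\overline{h^1(\mathcal{E})}$. This closure also meets $\mathcal{U}\times\mathbb{P}^{n,1}$, so it lies in some Morse set of $\mathbb{P}\Phi^1$ that is not contained in the equator $\mathcal{U}\times\mathbb{P}^{n,0}$; by the Selgrade decomposition (\ref{Sel4}) in Theorem \ref{Theorem_Selgrade1}(ii) the only such Morse set is $\mathcal{M}_c^1=\mathbb{P}\mathcal{V}_c^1$. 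Hence $(0_{\mathcal{U}},v,0)\in\mathcal{V}_c^1$ for every $v\in L(0)$, which combined with the first paragraph yields $\mathcal{V}_0^{\infty}\subset\mathcal{V}_c^1$.

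The most delicate step is the Morse-set identification: it rests both on the topological fact that closures of chain transitive sets are chain transitive and on the uniqueness assertion in Selgrade's theorem that singles out $\mathcal{V}_c^1$ as the only Selgrade bundle of $\Phi^1$ extending off $\mathcal{U}\times\mathbb{R}^{n+1,0}$. The remaining ingredients—chain controllability of $\overline{D_0}+L^0$, the equivalence between chain control sets and maximal chain transitive lifts, and the conjugacy properties of $h^1$—are already in hand.
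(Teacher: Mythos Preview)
Your argument is correct, but it follows a different route from the paper's proof. The paper works directly at the level of the full base $\mathcal{U}$: it first invokes Theorem \ref{Theorem_1_Eduardo} and Proposition \ref{Proposition_e}(iii) to see that $\mathbb{P}(L(0)\times\{1\})$ is chain transitive for the projectivized homogeneous flow, and then observes that the restriction of $\mathbb{P}\Phi^{1}$ to $\mathcal{U}\times\mathbb{P}(L(0)\times\{1\})$ is a \emph{product} flow (the second factor is independent of $u$). Using the Alongi--Nelson result that on compact spaces one may take all jump times equal to $1$, chain transitivity of each factor gives chain transitivity of the product, whence $\mathcal{U}\times\mathbb{P}(L(0)\times0)$ lies in $\mathcal{M}_{c}^{1}$. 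Your argument instead establishes the inclusion only over the single base point $0_{\mathcal{U}}$ (via the lift $\mathcal{E}$ of the chain control set and the closure step), and then leverages the product structure of $\mathcal{V}_{c}^{\infty}=\bigoplus\mathcal{V}_{i}^{\infty}$ already recorded in Theorem \ref{Theorem_Selgrade1}(iii) to propagate to all of $\mathcal{U}$. The trade-off is that you avoid the external Alongi--Nelson citation about fixed-time chains but rely on more of the surrounding machinery (Proposition \ref{Proposition1_2_Eduardo}(ii), Theorem \ref{Theorem_equivalence}, and the structural description of $\mathcal{V}_{c}^{\infty}$), whereas the paper's product-flow argument is shorter and self-contained once the fixed-time lemma is granted.
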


\begin{proof}
Theorem \ref{Theorem_1_Eduardo} shows that the subspace $L^{0}=L(0)$ is chain
transitive for the flow $\psi$. This implies by Proposition
\ref{Proposition_e}(iii) that $\mathbb{P}\left(  L^{0}\times\left\{
1\right\}  \right)  $ is chain transitive for the projectivized flow
$\mathbb{P}\psi$ on the compact space $\mathbb{P}^{n}$. The set $\mathcal{U}$
is also compact and chain transitive for the shift flow $\theta$. By Alongi
and Nelson \cite[Theorem 2.7.18]{AlonN07} for both flows it suffices to
consider chains with all jump times equal to $1$. This implies that
$h^{1}(\mathcal{U}\times L(0))=\mathcal{U}\times\mathbb{P}\left(
L(0)\times\left\{  1\right\}  \right)  $ is chain transitive, since the
restriction of the flow $\mathbb{P}\Phi^{1}$ is a product flow noting that the
flow on $\mathbb{P}\left(  L(0)\times\left\{  1\right\}  \right)  $ does not
depend on the element in $\mathcal{U}$. It follows that also $\mathcal{U}%
\times\mathbb{P}\left(  L(0)\times0\right)  $ is chain transitive and hence
$\mathbb{P}\mathcal{V}_{0}^{\infty}\subset\mathbb{P}\mathcal{V}_{c}^{1}$.
\end{proof}

In order to determine the chain control set $E$ we prepare the following results.

\begin{proposition}
\label{PropositionL-}Assume that $\mathbb{R}^{n}=L^{-}$, hence the homogeneous
part $\dot{x}=Ax$ is asymptotically stable. Then the following assertions hold.

(i) The control set $D_{0}$ containing \thinspace$0\in\mathbb{R}^{n}$ is
compact and given by $D_{0}=\overline{\mathbf{R}(0)}$.

(ii) The image $\mathbb{P}(D_{0}\times\left\{  1\right\}  )$ on the projective
Poincar\'{e} sphere is a compact control set contained in $\mathbb{P}^{n,1}$
for the induced system on $\mathbb{P}^{n}$

(iii) The set $\mathbb{P}\left(  D_{0}\times\left\{  1\right\}  \right)  $ is
chain controllable for the induced system on $\mathbb{P}^{n}$.

(iv) The set $\mathcal{A}\subset\mathcal{U}\times\mathbb{P}^{d,1}$ defined by%
\begin{equation}
\mathcal{A}:=\left\{  (u,\mathbb{P}(x,1))\in\mathcal{U}\times\mathbb{P}%
^{d}\left\vert \mathbb{P}(\varphi(t,x,u),1)\in\mathbb{P}(D_{0}\times\left\{
1\right\}  )\text{ for all }t\in\mathbb{R}\right.  \right\}  \label{lift2}%
\end{equation}
is chain transitive for the control flow $\mathbb{P}\Phi^{1}$.

(v) $\mathcal{A}$ is an attractor and its complementary repeller is the set
$\mathcal{A}^{\ast}=\mathcal{U}\times\mathbb{P}^{d,0}$.

(vi) The set $\mathbb{P}(D_{0}\times\left\{  1\right\}  )\subset
\mathbb{P}^{n,1}$ is the unique chain control set in $\mathbb{P}^{n}$.
\end{proposition}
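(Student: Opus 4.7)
My approach is to deduce parts (i)--(iii) directly from earlier structural results, to establish the attractor/repeller decomposition (v) as the main technical step, and then to derive (iv) and (vi) from (v) using the framework of Section \ref{Subsection2.1}.

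For (i), the hypothesis $\mathbb{R}^n=L^-$ forces $L^0=L^+=\{0\}$, so Corollary \ref{Corollary_cs}(i) collapses to $D_0=\overline{\mathbf{R}(0)}$; asymptotic stability of $A$ together with compactness of $U$ yields the uniform bound $\|\varphi(t,0,u)\|\le\|B\|\sup_{v\in U}\|v\|\int_0^{\infty}\|e^{A\tau}\|\,d\tau$, so $\overline{\mathbf{R}(0)}$ is compact. For (ii), I would apply the conjugacy $h^1$ of Proposition \ref{Proposition_e}(ii); it transports the control set $D_0\subset\mathbb{R}^n$ onto $\mathbb{P}(D_0\times\{1\})\subset\mathbb{P}^{n,1}$, and because $\mathbb{P}^{n,1}$ is invariant under $\mathbb{P}\Phi^1$ while the compact set $\mathbb{P}(D_0\times\{1\})$ sits at positive distance from $\mathbb{P}^{n,0}$, no extension to the equator is possible, so the image is a control set in all of $\mathbb{P}^n$. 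For (iii), every compact control set is chain controllable: given $p,q\in\mathbb{P}(D_0\times\{1\})$, approximate reachability yields $u$ and $t>0$ with $\varphi(t,p,u)$ within $\varepsilon$ of $q$, and Theorem \ref{Theorem_cs}(v) allows prolonging this by a trajectory remaining in $D_0$ so that the interval length is $\ge T$.

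The heart of the proof is (v). The equator $\mathcal{U}\times\mathbb{P}^{n,0}$ is invariant under $\mathbb{P}\Phi^1$ because the lifted system preserves the $r$-coordinate. For $(u,\mathbb{P}(x,1))\in\mathcal{U}\times\mathbb{P}^{n,1}$, I would expand
\[
\varphi(t,x,u)=e^{At}x+\int_0^t e^{A(t-s)}Bu(s)\,ds,
\]
noting that the first term decays exponentially, and that along any sequence $t_k\to\infty$ with $\theta_{t_k}u\to v$ in $\mathcal{U}$, a time-shift substitution gives $\varphi(t_k,x,u)\to\int_{-\infty}^0 e^{-A\sigma}Bv(\sigma)\,d\sigma$, which is the unique bounded solution of $\dot y=Ay+Bv$ evaluated at $0$. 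This solution and its entire orbit lie in $D_0$, so its image under $h^1$ belongs to $\mathcal{A}$; hence $\omega(u,\mathbb{P}(x,1))\subset\mathcal{A}$. Taking $N$ as a tubular neighborhood of the compact set $\mathcal{A}$ disjoint from $\mathcal{U}\times\mathbb{P}^{n,0}$ gives $\omega(N)=\mathcal{A}$, so $\mathcal{A}$ is an attractor. Conversely, orbits starting in $\mathcal{U}\times\mathbb{P}^{n,0}$ remain there, so their $\omega$-limits avoid $\mathcal{A}$, yielding $\mathcal{A}^{\ast}=\mathcal{U}\times\mathbb{P}^{n,0}$. I expect the main obstacle to be this weak$^{\ast}$-limit calculation, in particular justifying the limit of $\int_0^{t_k}e^{A(t_k-s)}Bu(s)\,ds$ on the expanding interval; the exponential decay of $\|e^{A\tau}\|$ provides the uniform integrability that makes the limit work.

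From (v), parts (iv) and (vi) follow. For (iv), chain transitivity of $\mathcal{A}$ is obtained by lifting: given $(u_1,p_1),(u_2,p_2)\in\mathcal{A}$, part (iii) supplies a controlled chain from $p_1$ to $p_2$ in $\mathbb{P}^n$, and concatenating the shifted control pieces in $\mathcal{U}$, as in the proof of Theorem \ref{Theorem_equivalence}, produces the required chain for $\mathbb{P}\Phi^1$. For (vi), Theorem \ref{Theorem_attractor} applied to the attractor-repeller pair of (v) places the chain recurrent set of $\mathbb{P}\Phi^1$ inside $\mathcal{A}\cup(\mathcal{U}\times\mathbb{P}^{n,0})$; by Theorem \ref{Theorem_equivalence} any chain control set in $\mathbb{P}^n$ meeting $\mathbb{P}^{n,1}$ lifts to a maximal chain transitive set meeting $\mathcal{U}\times\mathbb{P}^{n,1}$, hence contained in $\mathcal{A}$, and maximality together with (iv) forces equality, so this chain control set coincides with $\mathbb{P}(D_0\times\{1\})$, giving the uniqueness of (vi) in the sense of Corollary \ref{Corollary_splitE}(i).
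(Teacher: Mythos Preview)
Your plan follows essentially the paper's route; the differences are local to part (v) and to the order in which (iv) and (v) are established. You verify the attractor property directly from the definition $\omega(N)=\mathcal{A}$ via a weak$^{\ast}$ limit computation, whereas the paper invokes the characterization of attractor--repeller pairs from \cite[Lemma B.2.14]{ColK00} and instead checks that every point outside $\mathcal{A}\cup\mathcal{A}^{\ast}$ has $\omega$-limit in $\mathcal{A}$ and $\alpha$-limit in $\mathcal{A}^{\ast}$. The paper's forward-time step is simpler than yours: rather than pass to a weak$^{\ast}$ limit of $\theta_{t_k}u$ and identify the limit as the bounded solution at time $0$, it replaces $u$ by the control $u^{+}$ (equal to $u$ on $[0,\infty)$ and to $0$ on $(-\infty,0)$), observes directly that $(u^{+},\mathbb{P}(0,1))\in\mathcal{A}$ since $\varphi(t,0,u)\in\mathbf{R}(0)\subset D_0$ for $t\ge 0$, and uses $d(\theta_t u,\theta_t u^{+})\to 0$ together with $e^{At}x\to 0$. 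In exchange the paper must supply a backward-time argument showing $\|\varphi(-t,x,u)\|\to\infty$ whenever $x\notin D_0$, which you bypass by reading off $\mathcal{A}^{\ast}$ from its defining property $\omega(\cdot)\cap\mathcal{A}=\varnothing$. For (iii) and (iv) the paper simply cites Kawan \cite[Proposition 1.24]{Kawan13} (control sets with nonvoid interior lie in chain control sets, and lifts of chain control sets are chain transitive) instead of building chains by hand; either route is fine. One small point: your passage from the pointwise inclusion $\omega(u,\mathbb{P}(x,1))\subset\mathcal{A}$ to $\omega(N)=\mathcal{A}$ needs the observation that your estimates are uniform over $(u,x)$ with $\|x\|$ bounded, which they are since $N$ is bounded away from the equator.
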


\begin{proof}
(i) This follows by the characterization of the control set $D_{0}$ in
Corollary \ref{Corollary_cs}.

(ii) This holds since by Proposition \ref{Proposition_e}(ii) the map $h^{1}$
is a conjugacy and $D_{0}$ is compact.

(iii) The set $D_{0}$ is a control set with nonvoid interior relative to the
controllability subspace $\mathcal{C}$. Since $h^{1}$ is a conjugacy, it
follows that $\mathbb{P}(D_{0}\times\left\{  1\right\}  )$ is a control set
with nonvoid interior for the system on $\mathbb{P}^{n}$ restricted to
$\mathbb{P}(\mathcal{C}\times\left\{  1\right\}  )$. Then Kawan
\cite[Proposition 1.24(ii)]{Kawan13} implies that $\mathbb{P}(D_{0}%
\times\left\{  1\right\}  )$ is contained in a chain control set of the system
restricted to $\mathbb{P}(\mathcal{C}\times\left\{  1\right\}  )$, and hence
is contained in a chain control set of the system on $\mathbb{P}^{n}$. It
follows that $\mathbb{P}(D_{0}\times\left\{  1\right\}  )$ is contained in a
chain control set, hence chain controllable.

(iv) By \cite[Proposition 1.24(iv)]{Kawan13} the lift of a chain control set
is a chain transitive set for the control flow, hence by (iii) the set
$\mathcal{A}$ in (\ref{lift2}) is chain transitive. Furthermore, $\mathcal{A}$
is an invariant compact subset of $\mathcal{U}\times\mathbb{P}^{d,1}$ for the
flow $\mathbb{P}\Phi_{t}^{1}(u,y)=(\theta_{t}u,\mathbb{P}\varphi^{1}(t,y,u))$.

(v) Note that $\mathcal{A}^{\ast}$ is also compact and invariant since
$\mathbb{P}^{d,0}$ is invariant.

\textbf{Claim}: Every $(u,y)\in(\mathcal{U}\times\mathbb{P}^{d})\setminus
(\mathcal{A}\cup\mathcal{A}^{\ast})$ satisfies $\omega(u,y)\subset\mathcal{A}$
and $\alpha(u,y)\subset\mathcal{A}^{\ast}$.

According to Colonius and Kliemann \cite[Lemma B.2.14]{ColK00} the disjoint
compact invariant sets $\mathcal{A}$ and $\mathcal{A}^{\ast}$ form an
attractor-repeller pair$\ $if and only if (a) $(u,y)\not \in \mathcal{A}%
^{\ast}$ implies $\left\{  \mathbb{P}\Phi_{t}^{1}(u,y)\left\vert
t\geq0\right.  \right\}  \cap N\not =\varnothing$ for every neighborhood $N$
of $\mathcal{A}$ and (b) $(u,y)\not \in \mathcal{A}$ implies $\left\{
\mathbb{P}\Phi_{t}^{1}(u,y)\left\vert t\leq0\right.  \right\}  \cap
N\not =\varnothing$ for every neighborhood $N$ of $\mathcal{A}^{\ast}$. Since
the limit sets are nonvoid in the compact space $\mathcal{U}\times
\mathbb{P}^{d}$, these conditions hold if the claim is proved. Thus assertion
(v) will follow.

In order to prove (a), let $(u,y)\not \in \mathcal{A}\cup\mathcal{A}^{\ast}$.
Then $y$ can be written as $y=\mathbb{P}(x,1)$ and by linearity%
\[
\varphi^{1}(t,x,1,u)=(\varphi(t,0,u)+e^{tA}x,1)=\varphi^{1}(t,0,1,u)+\varphi
^{1}(t,x,0,0_{\mathcal{U}}).
\]
Since $e^{At}x\rightarrow0$ for $t\rightarrow\infty$ it follows that%
\begin{align}
\lim_{t\rightarrow\infty}d(\mathbb{P}\Phi_{t}^{1}(u,y),\mathbb{P}\Phi_{t}%
^{1}(u,\mathbb{P}(0,1)))  &  =\lim_{t\rightarrow\infty}d(\left(  \theta
_{t}u,\mathbb{P}\varphi^{1}(t,x,1,u)\right)  ,(\theta_{t}u,\mathbb{P}%
\varphi^{1}(t,0,1,u))\nonumber\\
&  =0. \label{4.8}%
\end{align}
By (i) it follows that $\varphi(t,0,u)\in\mathbf{R}(0)\subset D_{0}$ for all
$t\geq0$. Define $u^{+}\in\mathcal{U}$ by $u^{+}(t):=0$ for $t<0$ and
$u^{+}(t):=u(t)$ for $t\geq0$. Then $(u^{+},\mathbb{P}(0,1))\in\mathcal{A}$
and $d(\mathbb{P}\Phi_{t}^{1}(u^{+},\mathbb{P}(0,1)),\mathbb{P}\Phi_{t}%
^{1}(u,\mathbb{P}(0,1)))\rightarrow0$ for $t\rightarrow\infty$. Together with
(\ref{4.8}) this yields $d(\mathbb{P}\Phi_{t}^{1}(u,y),\mathcal{A}%
)\rightarrow0$ for $t\rightarrow\infty$, hence $\omega(u,y)\subset\mathcal{A}$.

For the proof of (b) again let $(u,y)=(u,\mathbb{P}(x,1))\not \in
\mathcal{A}\cup\mathcal{A}^{\ast}$. We may assume that $\mathbb{P}%
(x,1)\not \in \mathbb{P}(D_{0}\times{1})$. In order to show that
$\alpha(u,y)\subset\mathcal{A}^{\ast}$ it suffices to show that $\Vert
\varphi(-t,x,u)\Vert\rightarrow\infty$ for $t\rightarrow\infty$.

Assume, by contradiction, that $\varphi(-t_{i},x,u)$ remains bounded for a
sequence $t_{i}\rightarrow\infty$. Write $x_{i}=\varphi(-t_{i},x,u)$ and
$u_{i}=u(\cdot-t_{i})$ for all $i\in\mathbb{N}$. Then
\[
x=\varphi(t_{i},x_{i},u_{i})=\varphi(t_{i},x_{i},0)+\varphi(t_{i}%
,0,u_{i})=e^{t_{i}A}x_{i}+\varphi(t_{i},0,u_{i}).
\]
Note that $e^{t_{i}A}x_{i}\rightarrow0$, since $A$ is asymptotically stable
and $\{x_{i},i\in\mathbb{N}\}$ is bounded. Therefore $\varphi(t_{i}%
,0,u_{i})\rightarrow x$ and $x\in\overline{\mathbf{R}(0)}=D_{0}$, which is a contradiction.

(vi) A consequence of (iv), (v), and Theorem \ref{Theorem_attractor} is that
the chain transitive sets of $\mathbb{P}\Phi^{1}$ are either contained in
$\mathcal{U}\times\mathbb{P}^{n,0}$ or coincide with the compact set
$\mathcal{A}\subset\mathcal{U}\times\mathbb{P}^{n,1}$. Thus $\mathcal{A}$ is a
maximal chain transitive set and by Theorem \ref{Theorem_equivalence}
$\mathbb{P}(D_{0}\times\left\{  1\right\}  )$ is a chain control set.
\end{proof}

The following proposition for completely unstable systems is analogous.

\begin{proposition}
\label{PropositionL+}Assume that $\mathbb{R}^{n}=L^{+}$, hence the homogeneous
part is completely unstable. Then the following assertions hold.

(i) The control set $D_{0}$ containing \thinspace$0\in\mathbb{R}^{n}$ is open
and has compact closure satisfying $\overline{D_{0}}=\mathbf{C}(0)$.

(ii) The image $\mathbb{P}(D_{0}\times\left\{  1\right\}  )$ is an open
control set with compact closure%
\[
\overline{\mathbb{P}(D_{0}\times\left\{  1\right\}  )}=\mathbb{P}%
(\overline{D_{0}}\times\left\{  1\right\}  )\subset\mathbb{P}^{n,1}%
\]
for the induced system on $\mathbb{P}^{n}$.

(iii) The set $\overline{\mathbb{P}\left(  D_{0}\times\left\{  1\right\}
\right)  }$ is chain controllable for the induced system on $\mathbb{P}^{n}$.

(iv) The set $\mathcal{A}^{\ast}\subset\mathcal{U}\times\mathbb{P}^{n,1}$
defined by%
\[
\mathcal{A}^{\ast}:=\left\{  (u,\mathbb{P}(x,1))\in\mathcal{U}\times
\mathbb{P}^{n}\left\vert \mathbb{P}(\varphi(t,x,u),1)\in\mathbb{P}(D_{0}%
\times\left\{  1\right\}  )\text{ for all }t\in\mathbb{R}\right.  \right\}
\]
is chain transitive for the control flow $\mathbb{P}\Phi^{1}$.

(v) $\mathcal{A}^{\ast}$ is a repeller and its complementary attractor is
$\mathcal{A}=\mathbb{P}^{n,0}$.

(vi) The set $\overline{\mathbb{P}(D_{0}\times\left\{  1\right\}  }%
)\subset\mathbb{P}^{n,1}$ is the unique chain control set in $\mathbb{P}%
^{n,1}$.
\end{proposition}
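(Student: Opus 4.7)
The plan is to obtain all six assertions by dualizing Proposition \ref{PropositionL-} via time reversal. Introduce the time-reversed control system $\dot{z}(t)=-Az(t)-B\hat{u}(t)$ with $\hat{u}\in\mathcal{U}$; since $\mathbb{R}^{n}=L^{+}$ for $A$, the matrix $-A$ is asymptotically stable, so Proposition \ref{PropositionL-} applies verbatim to the reversed system. The involution $u\mapsto u(-\cdot)$ is a homeomorphism of $\mathcal{U}$, and a routine check shows that the control flow of the reversed system is intertwined with the time reversal $\Phi^{\ast}$ of $\Phi$; lifting to the Poincar\'e sphere, the reversed projective flow is similarly intertwined with $(\mathbb{P}\Phi^{1})^{\ast}$. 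Under this identification reachable sets swap with controllable sets, and attractors of $\mathbb{P}\Phi^{1}$ swap with repellers.

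For (i), Corollary \ref{Corollary_cs} with $L^{0}=L^{-}=\{0\}$ combined with Theorem \ref{Theorem_cs}(i) yields that $D_{0}=\mathbf{C}(0)\cap L^{+}$ is bounded, convex, and open in $L^{+}=\mathbb{R}^{n}$, hence has compact closure; the identity $\overline{D_{0}}=\mathbf{C}(0)$ is the time-reversal counterpart of $D_{0}=\overline{\mathbf{R}(0)}$ in Proposition \ref{PropositionL-}(i). Assertion (ii) is then immediate: $h^{1}$ of Proposition \ref{Proposition_e}(ii) is a conjugacy, it carries open sets to open sets, and compactness of $\overline{D_{0}}$ in $\mathbb{R}^{n}$ keeps its image bounded away from the equator $\mathbb{P}^{n,0}$, so taking closures commutes with $h^{1}$. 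For (iii), $\mathbb{P}(D_{0}\times\{1\})$ is a control set of the induced projective system, hence trivially chain controllable, and chain control sets are closed (by Proposition \ref{Proposition_ccs1}(iii) together with Theorem \ref{Theorem_equivalence}), so $\overline{\mathbb{P}(D_{0}\times\{1\})}$ remains chain controllable. Assertion (iv) then follows from Theorem \ref{Theorem_equivalence}: the lift of a chain control set is a maximal chain transitive set of $\mathbb{P}\Phi^{1}$.

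The substantive step is (v). I would apply Proposition \ref{PropositionL-}(iv)--(v) to the time-reversed system: it furnishes an attractor in $\mathcal{U}\times\mathbb{P}^{n,1}$ that, under the time-reversal intertwining, is exactly $\mathcal{A}^{\ast}$, and a complementary repeller equal to $\mathcal{U}\times\mathbb{P}^{n,0}$; reversing time swaps them and delivers (v). The main obstacle is verifying cleanly that the reversal is compatible with the Poincar\'e-sphere projection and with the shift on $\mathcal{U}$, so that the attractor-repeller pair transports correctly and the right-hand side of (\ref{lift2}) really corresponds to the attractor of the reversed projective flow. Once (v) is in place, (vi) follows exactly as in Proposition \ref{PropositionL-}(vi): by Theorem \ref{Theorem_attractor} every chain transitive subset of $\mathcal{U}\times\mathbb{P}^{n}$ lies in $\mathcal{A}\cup\mathcal{A}^{\ast}=(\mathcal{U}\times\mathbb{P}^{n,0})\cup\mathcal{A}^{\ast}$, so $\mathcal{A}^{\ast}$ is the unique maximal chain transitive set meeting $\mathcal{U}\times\mathbb{P}^{n,1}$, and Theorem \ref{Theorem_equivalence} identifies its projection with the unique chain control set $\overline{\mathbb{P}(D_{0}\times\{1\})}$ in $\mathbb{P}^{n,1}$.
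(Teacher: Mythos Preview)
Your proposal is correct and follows exactly the paper's approach: the paper's entire proof reads ``This follows by time reversal from Proposition \ref{PropositionL-}. Observe that under time reversal closed control sets become open control sets and attractor-repeller pairs exchange their roles.'' You have simply supplied the details the paper omits, including the intertwining via $u\mapsto u(-\cdot)$ and the verification that attractor-repeller pairs transport correctly under the time-reversal conjugacy.
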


\begin{proof}
This follows by time reversal from Proposition \ref{PropositionL-}. Observe
that under by time reversal closed control sets become open control sets and
attractor-repeller pairs exchange their roles.
\end{proof}

Next we combine the previous propositions to obtain the following main result
of this section.

\begin{theorem}
\label{Theorem_ccs1}The unique chain control set $E$ in $\mathbb{R}^{n}$ of
the linear control system (\ref{1}) is given by $E=\overline{D_{0}}+L^{0}$,
where $D_{0}$ is the control set containing $0$ and $L^{0}$ is the center
Lyapunov space of $A$.
\end{theorem}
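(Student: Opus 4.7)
The plan is to establish the two inclusions $\overline{D_0}+L^0\subset E$ and $E\subset\overline{D_0}+L^0$, with the uniqueness of $E$ emerging as a byproduct. The first inclusion is essentially already done: Proposition \ref{Proposition1_2_Eduardo}(ii) shows that $\overline{D_0}+L^0$ is chain controllable, and Proposition \ref{Proposition_ccs1}(ii) then embeds it in a chain control set, which we take to be $E$.

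The reverse inclusion I would obtain by projecting along the center subspace. Let $\pi^h:\mathbb{R}^n\to L^+\oplus L^-$ be the projection along $L^0$. Because $L^0$ and $L^{\pm}$ are $A$-invariant, $\pi^h$ commutes with $e^{tA}$, so applying $\pi^h$ to a controlled $(\varepsilon,T)$-chain for (\ref{1}) yields a controlled $(\|\pi^h\|\varepsilon,T)$-chain for the induced hyperbolic system (\ref{hyp}). Therefore $\pi^h(E)$ is chain controllable for (\ref{hyp}); containing $0$, it lies in the same chain control set as $\overline{D_0^h}=\pi^h\overline{D_0}$, the equality coming from Lemma \ref{Lemma_projection}(iii). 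Provided this common chain control set equals $\overline{D_0^h}$, one obtains $\pi^h E\subset\pi^h\overline{D_0}$, and then each $x\in E$ decomposes as $z+(x-z)$ with $z\in\overline{D_0}$ and $x-z\in\ker\pi^h=L^0$, giving $E\subset\overline{D_0}+L^0$. Uniqueness of $E$ follows by the same projection argument applied to any chain control set, combined with maximality.

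The main obstacle is thus identifying the chain control set of the purely hyperbolic system (\ref{hyp}) with $\overline{D_0^h}$. Propositions \ref{PropositionL-} and \ref{PropositionL+} cover only the completely stable and completely unstable extremes; in the genuinely saddle case I would combine them via an attractor-repeller decomposition on the projective Poincar\'e sphere. Theorem \ref{Theorem_Selgrade1}(iv) identifies the central Selgrade bundle $\mathcal{V}_c^1$ as a line bundle parametrized by the bounded solutions $e(u,\cdot)$ and entirely contained in $\mathcal{U}\times\mathbb{P}^{n,1}$. By Selgrade's theorem (Theorem \ref{Theorem_Selgrade}), $\mathbb{P}\mathcal{V}_c^1$ is then the unique chain recurrent component of $\mathbb{P}\Phi^1$ meeting $\mathcal{U}\times\mathbb{P}^{n,1}$, with $\mathcal{U}\times\mathbb{P}(L^{\pm}\times\{0\})$ playing the roles of the complementary attractor and repeller on the equator; Theorem \ref{Theorem_attractor} then pins down the chain recurrent set in $\mathcal{U}\times\mathbb{P}^{n,1}$ as exactly $\mathbb{P}\mathcal{V}_c^1$. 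Transporting this conclusion back through the conjugacy $h^1$ of Proposition \ref{Proposition_e}(ii) realizes the chain control set of (\ref{hyp}) as the set $\{e(u,0):u\in\mathcal{U}\}$, and combining the stable case analysis (applied to $e_-(u,0)\in\overline{\pi^-\mathbf{R}(0)}$) with the unstable case (applied to $e_+(u,0)\in\overline{\pi^+\mathbf{C}(0)}$) shows this set coincides with $\overline{D_0^h}$, closing the argument.
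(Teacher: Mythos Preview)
Your argument is correct, but it takes a longer route than the paper. Both proofs obtain the inclusion $\overline{D_0}+L^0\subset E$ from Proposition~\ref{Proposition1_2_Eduardo}(ii). For the reverse inclusion, the paper projects \emph{separately} onto $L^-$ and $L^+$ via $\pi^-$ and $\pi^+$; each projection reduces to a one-sided situation handled directly by Proposition~\ref{PropositionL-} (respectively Proposition~\ref{PropositionL+}), giving $E\subset\overline{\pi^-\mathbf R(0)}\oplus L^0\oplus L^+$ and $E\subset L^-\oplus L^0\oplus\overline{\pi^+\mathbf C(0)}$, and intersecting yields $E\subset\overline{D_0}+L^0$. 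You instead project in one step to the full hyperbolic part $L^+\oplus L^-$ via $\pi^h$, which forces you to identify the chain control set of the saddle-type system (\ref{hyp}); for this you bring in Theorem~\ref{Theorem_Selgrade1}(iv) and the Selgrade machinery to realize $E^h$ as $\{e(u,0):u\in\mathcal U\}$, and only \emph{then} project to $L^-$ and $L^+$ to pin down $e_\pm(u,0)\in\overline{D_0^\pm}$. That last step is essentially the paper's argument in disguise, so your detour through the bounded solutions is not needed for this theorem---the paper postpones that machinery to Section~\ref{Section5}, where it is genuinely required for Theorem~\ref{Theorem_central}. The payoff of your route is that it makes the identification $E^h=\{e(u,0):u\in\mathcal U\}$ explicit early on; the cost is that the proof of Theorem~\ref{Theorem_ccs1} becomes less self-contained, relying on the external Corollary~\ref{Corollary_splitE}(iii) rather than just the attractor--repeller Propositions~\ref{PropositionL-} and~\ref{PropositionL+}.
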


\begin{proof}
Along with control system (\ref{1}) on $\mathbb{R}^{n}$ consider the induced
control systems on $L^{-}$ and on $L^{+}$, which are described in Lemma
\ref{Lemma_projection}. By Corollary \ref{Corollary_cs} and Lemma
\ref{Lemma_projection}(i) and (ii) the control set $D_{0}$ can be written as
$D_{0}=D^{-}\oplus(L^{0}\cap\mathcal{C})\oplus D^{+}$, where $D^{-}\subset
L^{-}$ is the control set containing $0$ of system (\ref{P}) and $D^{+}\subset
L^{+}$ is the control set containing $0$ of system (\ref{P+}).

By Proposition \ref{PropositionL-}(vii) the unique chain control set in
$\mathbb{P}^{n^{-},1}$ is the compact set $\mathbb{P}(D^{-}\times\left\{
1\right\}  )$. This implies that $\overline{D^{-}}$ is the chain control set
in $L^{-}$. Since distances are decreased under the map $\pi^{-}$ the chain
control set $E$ in $\mathbb{R}^{n}$ satisfies $\pi^{-}E\subset$ $\overline
{D^{-}}$, hence $E\subset\overline{D^{-}}\oplus L^{0}\oplus L^{+}$. By Lemma
\ref{Lemma_projection} $\pi^{-}\mathbf{R}(0)=D^{-}$. It follows that
\[
E\subset\overline{\pi^{-}\mathbf{R}(0)}\oplus L^{0}\oplus L^{+}.
\]
Analogously, one shows that the chain control set $E$ in $\mathbb{R}^{n}$ is
contained in $L\oplus L^{0}\oplus\overline{\pi^{+}\mathbf{C}(0)}$. Using
Theorem \ref{Theorem_cs}(iii) it follows that $E$ is contained in
\[
\left(  \overline{\pi^{-}\mathbf{R}(0)}\oplus L^{0}\oplus L^{+}\right)
\cap\left(  L^{-}\oplus L^{0}\oplus\overline{\pi^{+}\mathbf{C}(0)}\right)
=\overline{\pi^{-}\mathbf{R}(0)}\oplus L^{0}\oplus\overline{\pi^{+}%
\mathbf{C}(0)}=\overline{D_{0}}+L^{0}.
\]
Since by Proposition \ref{Proposition1_2_Eduardo} $\overline{D_{0}}+L^{0}$ is
chain controllable equality holds.
\end{proof}

\begin{remark}
The arguments above provide an alternative proof of the claim that $E$ is the
unique chain control set in $\mathbb{R}^{n}$.
\end{remark}

The following corollary to Theorem \ref{Theorem_ccs1} presents another
description of the chain control set.

\begin{corollary}
For the linear control system (\ref{1}), the chain control set $E$ is%
\[
E=(\overline{\mathbf{C}(0)}\cap L^{+})\oplus L^{0}\oplus(\overline
{\mathbf{R}(0)}\cap L^{-})=(\mathbf{C}^{c}(0)\cap L^{-})\oplus L^{0}%
\oplus(\mathbf{R}^{c}(0)\cap L^{-}).
\]

\end{corollary}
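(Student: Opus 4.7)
The plan is to start from Theorem~\ref{Theorem_ccs1}, which gives $E=\overline{D_0}+L^0$, and refine this using the explicit decomposition of $D_0$ from Corollary~\ref{Corollary_cs}. The three summands of $D_0=(\mathbf{C}(0)\cap L^+)\oplus(L^0\cap\mathcal{C})\oplus(\overline{\mathbf{R}(0)}\cap L^-)$ lie in the distinct closed subspaces $L^+$, $L^0$, $L^-$ of the finite-dimensional space $\mathbb{R}^n$, so taking closure preserves the direct sum; adding $L^0$ then absorbs the middle summand and produces
\[
E=\overline{\mathbf{C}(0)\cap L^+}\oplus L^0\oplus(\overline{\mathbf{R}(0)}\cap L^-).
\]
For the first claimed identity I would then show $\overline{\mathbf{C}(0)\cap L^+}=\overline{\mathbf{C}(0)}\cap L^+$: the inclusion $\subseteq$ is immediate since $L^+$ is closed, and the reverse follows from Theorem~\ref{Theorem_cs}(i) applied to the controllable restriction of the system to $\mathcal{C}$, which yields $\mathbf{C}(0)=(\mathbf{C}(0)\cap L^+)\oplus(L^{-,0}\cap\mathcal{C})$; taking closure and intersecting with $L^+$ finishes the step. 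The factor $\overline{\mathbf{R}(0)}\cap L^-$ is already in the required form.

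For the second equality I would separately establish $\overline{\mathbf{C}(0)}\cap L^+=\mathbf{C}^c(0)\cap L^+$ and $\overline{\mathbf{R}(0)}\cap L^-=\mathbf{R}^c(0)\cap L^-$. The inclusions $\subseteq$ are painless: trajectories are chains, so $\mathbf{C}(0)\subseteq\mathbf{C}^c(0)$ and $\mathbf{R}(0)\subseteq\mathbf{R}^c(0)$, and the chain sets are closed by Proposition~\ref{Proposition_ccs1}(iii). For $\supseteq$ in the first case, given $z\in L^+\cap\mathbf{C}^c(0)$ I would project any controlled $(\varepsilon,T)$-chain from $z$ to $0$ through the linear map $\pi^+$: since $\pi^+$ commutes with $e^{At}$ on $L^+$ and carries $Bu$ to $\pi^+Bu$ (as used in Lemma~\ref{Lemma_projection}), the image is a controlled chain in the $L^+$-subsystem (\ref{P+}), so $z\in\mathbf{C}^c_+(0)$. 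That subsystem has completely unstable linear part; applying Theorem~\ref{Theorem_ccs1} to it together with Proposition~\ref{PropositionL+}(i) shows its chain control set is $\overline{D_0^+}=\mathbf{C}^+(0)=\overline{\mathbf{C}(0)\cap L^+}$, while Proposition~\ref{Proposition1_2_Eduardo}(i) applied to the same subsystem gives $\mathbf{R}^c_+(0)\supseteq\overline{D_0^+}+L^+=L^+$. Theorem~\ref{Theorem_control_inv}(iii) then forces $\mathbf{C}^c_+(0)=\mathbf{C}^c_+(0)\cap\mathbf{R}^c_+(0)=\overline{D_0^+}$, so $z\in\overline{\mathbf{C}(0)\cap L^+}=\overline{\mathbf{C}(0)}\cap L^+$. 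The argument for $L^-$ is symmetric, using $\pi^-$ together with Proposition~\ref{PropositionL-}.

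The main obstacle is this last bootstrap on the hyperbolic subsystems: chain controllability to $0$ is a priori weaker than approximate controllability, so $\mathbf{C}^c_+(0)$ could in principle strictly contain the chain control set of the $L^+$-subsystem. The hyperbolicity of $A|_{L^+}$ (and symmetrically of $A|_{L^-}$) is what closes the gap, through the one-sided identity $\mathbf{R}^c_+(0)=L^+$ furnished by Proposition~\ref{Proposition1_2_Eduardo}(i); combined with the characterization $E=\mathbf{R}^c\cap\mathbf{C}^c$ from Theorem~\ref{Theorem_control_inv}(iii) this collapses $\mathbf{C}^c_+(0)$ onto $\overline{D_0^+}$.
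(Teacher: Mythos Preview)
Your argument is correct. The first equality is obtained essentially as in the paper, starting from $E=\overline{D_0}+L^0$ and the decomposition of $D_0$ in Corollary~\ref{Corollary_cs}.

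For the second equality you take a genuinely different route. You project controlled chains through $\pi^{+}$ (resp.\ $\pi^{-}$) into the induced subsystems on $L^{+}$ and $L^{-}$, apply Theorem~\ref{Theorem_ccs1} and Theorem~\ref{Theorem_control_inv}(iii) \emph{to those subsystems}, and thereby identify $\mathbf{C}^{c}_{+}(0)=\overline{D_0^{+}}$ and $\mathbf{R}^{c}_{-}(0)=\overline{D_0^{-}}$ exactly. The paper never leaves $\mathbb{R}^{n}$: from Proposition~\ref{Proposition1_2_Eduardo}(i) it has $L^{+}\subset L^{+,0}\subset\mathbf{R}^{c}(0)$ and $L^{-}\subset L^{-,0}\subset\mathbf{C}^{c}(0)$, so by Theorem~\ref{Theorem_control_inv}(iii) applied to the \emph{full} system one gets $\mathbf{C}^{c}(0)\cap L^{+}\subset\mathbf{C}^{c}(0)\cap\mathbf{R}^{c}(0)=E$, hence $\mathbf{C}^{c}(0)\cap L^{+}\subset E\cap L^{+}=\overline{\mathbf{C}(0)}\cap L^{+}$ by the first equality, and symmetrically on $L^{-}$. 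The paper's path is shorter and needs no separate analysis of the hyperbolic pieces; your path, on the other hand, actually computes the one-sided chain sets $\mathbf{C}^{c}_{+}(0)$ and $\mathbf{R}^{c}_{-}(0)$ of the induced systems, which is extra information not stated in the paper.
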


\begin{proof}
Corollary \ref{Corollary_cs} implies%
\[
E=\overline{D_{0}}+L^{0}=\left(  \overline{\mathbf{C}(0)}\cap L^{+}\right)
\oplus L^{0}\oplus(\mathbf{R}(0)\cap L^{-}).
\]
By Proposition \ref{Proposition1_2_Eduardo} the chain reachable set
$\mathbf{R}^{c}(0)$ contains $L^{+}\oplus L^{0}$ and the chain controllable
set $\mathbf{C}^{c}(0)$ contains $L^{0}\oplus L^{-}$. Then decomposition
(\ref{decomposition1}) shows that%
\[
\mathbf{R}^{c}(0)=L^{+}\oplus L^{0}\oplus\left(  \mathbf{R}^{c}(0)\cap
L^{-}\right)  \text{ and }\mathbf{C}^{c}(0)=(\mathbf{C}^{c}(0)\cap
L^{+})\oplus L^{0}\oplus L^{-}.
\]
Combining this with Theorem \ref{Theorem_control_inv} one finds%
\[
E=\mathbf{C}^{c}(0)\cap\mathbf{R}^{c}(0)=(\mathbf{C}^{c}(0)\cap L^{-})\oplus
L^{0}\oplus(\mathbf{R}^{c}(0)\cap L^{-}).
\]

\end{proof}

Comparing this to the characterization of the control set $D_{0}$ in Corollary
\ref{Corollary_cs} one sees that the difference between the control set
$D_{0}$ and the chain control set $E$ is that $D_{0}$ contains the summand
$L^{0}\cap\mathcal{C}$ while for $E$ the whole center subspace $L^{0}$ is a
summand and the closure of $\mathbf{C}(0)\cap L^{+}$ is taken.

\section{The system on the Poincar\'{e} sphere\label{Section5}}

In this section we deduce a formula for the central Selgrade bundle and show
that the chain control set on the Poincar\'{e} sphere coincides with the
closure of the image of the chain control set on $\mathbb{R}^{n}$.

By Theorem \ref{Theorem_Selgrade1} we know that the central Selgrade bundle
$\mathcal{V}_{c}^{1}$ for (\ref{1}) is the unique Selgrade bundle not
contained in $\mathcal{U}\times\mathbb{R}^{n+1,0}$; cf. (\ref{4.2}). The
projection $\mathcal{M}_{c}^{1}=\mathbb{P}\mathcal{V}_{c}^{1}\subset
\mathcal{U}\times\mathbb{P}^{n}$ is the unique chain recurrent component of
$\mathbb{P}\Phi^{1}$ with $\mathcal{M}_{c}^{1}\cap\left(  \mathcal{U}%
\times\mathbb{P}^{n,1}\right)  \not =\varnothing$. Furthermore,
\[
E_{c}^{1}=\{p\in\mathbb{P}^{n}\left\vert \exists u\in\mathcal{U}%
:(u,p)\in\mathcal{M}_{c}^{1}\right.  \}
\]
is the unique chain control set not contained in the equator $\mathbb{P}%
^{n,0}$ of the Poincar\'{e} sphere $\mathbb{P}^{n}$, i.e., having nonvoid
intersection with $\mathbb{P}^{n,1}$.

We will exploit the decomposition of $\mathbb{R}^{n}$ into the hyperbolic part
$L^{+}\oplus L^{-}$ and $L^{0}$. Fixing a basis, we identify the subspace
$L^{+}\oplus L^{-}$ with $\mathbb{R}^{n^{h}}$ where $n^{h}=\dim(L^{+}\oplus
L^{-})$. The corresponding Poincar\'{e} sphere is $\mathbb{P}^{n^{h}}$. The
next proposition provides basic information on the behavior of the induced
control system (\ref{hyp}) on $L^{+}\oplus L^{-}$.

\begin{proposition}
\label{Proposition_hyp1}(i) The chain control set $E^{h}$ for the induced
control system (\ref{hyp}) on $L^{+}\oplus L^{-}$ is compact and
$E^{h}=\overline{D_{0}^{h}}$, where $D_{0}^{h}=\pi^{h}D_{0}$ is the control
set containing $0.$

(ii) There exists a unique chain control set $E_{c}^{h}$ having nonvoid
intersection with $\mathbb{P}^{n^{h},1}$ for the induced control system on
$\mathbb{P}^{n^{h}}$. It is a compact subset of $\mathbb{P}^{n^{h},1}$ and
coincides with $\overline{\mathbb{P}\left(  E^{h}\times\left\{  1\right\}
\right)  }=\overline{\mathbb{P}\left(  D_{0}^{h}\times\left\{  1\right\}
\right)  }$.

(iii) Consider the flows $\mathbb{P}\Phi^{1}$ on $\mathcal{U}\times
\mathbb{P}^{n}$ and $\mathbb{P}\Phi^{h,1}$ on $\mathcal{U}\times
\mathbb{P}^{n^{h}}$. The chain recurrent components $\mathcal{M}_{c}%
^{1}=\mathbb{P}\mathcal{V}_{1}^{c}$ and $\mathcal{M}_{c}^{h,1}=\mathbb{P}%
\mathcal{V}_{c}^{h,1}$ with $\mathcal{M}_{c}^{1}\cap\mathbb{P}^{n,1}%
\not =\varnothing$ and $\mathcal{M}_{c}^{h,1}\cap\mathbb{P}^{n^{h},1}%
\not =\varnothing$ are the lifts $\mathcal{E}_{c}^{1}$ and $\mathcal{E}%
_{c}^{h,1}$ of the chain control sets $E_{c}^{1}$ and $E_{c}^{h,1}$, respectively.

(iv) The central Selgrade bundle for the flow $\Phi^{h,1}$ on $\mathcal{U}%
\times\left(  L^{+}\oplus L^{-}\right)  \times\mathbb{R}$ is the line bundle%
\[
\mathcal{V}_{c}^{h,1}=\left\{  (u,-re(u,0),r)\in\mathcal{U}\times
\mathbb{R}^{n^{h}}\times\mathbb{R}\left\vert u\in\mathcal{U},r\in
\mathbb{R}\right.  \right\}  ,
\]
where $e(u,t),t\in\mathbb{R}$, is the unique bounded solution of (\ref{hyp})
for $u\in\mathcal{U}$, and the projection satisfies $\mathcal{M}_{c}%
^{h,1}=\mathbb{P}\mathcal{V}_{c}^{h,1}\subset\mathbb{P}^{n^{h},1}$.
\end{proposition}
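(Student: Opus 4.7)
The overall strategy is to reduce each item to a result already established earlier in the paper, exploiting the fact that $A^h$ is hyperbolic, so the hyperbolic subsystem (\ref{hyp}) has trivial center Lyapunov space. For (i), I apply Theorem \ref{Theorem_ccs1} to (\ref{hyp}): since the center Lyapunov space of $A^h$ is $\{0\}$, Theorem \ref{Theorem_ccs1} gives directly $E^h=\overline{D_0^h}$, and the identification $D_0^h=\pi^h D_0$ is Lemma \ref{Lemma_projection}(iii). Compactness of $\overline{D_0^h}$ follows from Theorem \ref{Theorem_cs}(i) applied to the restriction of $(A^h,\pi^h B)$ to its controllability subspace: the sets $\mathbf{C}^h(0)\cap L^+$ and $\mathbf{R}^h(0)\cap L^-$ are bounded, so their closures are compact, and Theorem \ref{Theorem_cs}(iii) writes $\overline{D_0^h}$ as their sum.

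Items (ii) and (iv) are direct applications of earlier results to (\ref{hyp}). For (ii), since $A^h$ is hyperbolic, Corollary \ref{Corollary_splitE}(iii) applied to the hyperbolic subsystem produces the unique chain control set $E_c^{h,1}$ on $\mathbb{P}^{n^h}$ meeting $\mathbb{P}^{n^h,1}$, gives the identity $E_c^{h,1}=\mathbb{P}(E^h\times\{1\})$, and shows that it is compact and contained in $\mathbb{P}^{n^h,1}$. Substituting (i) yields the form $\overline{\mathbb{P}(D_0^h\times\{1\})}$, where the closure is actually redundant because $D_0^h$ already has compact closure in $L^+\oplus L^-$. For (iv), the same hyperbolicity assumption allows a direct application of Theorem \ref{Theorem_Selgrade1}(iv) to (\ref{hyp}), giving both the explicit line-bundle description of $\mathcal{V}_c^{h,1}$ in terms of the unique bounded solution $e(u,\cdot)$ and the inclusion $\mathcal{M}_c^{h,1}\subset\mathbb{P}^{n^h,1}$.

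Part (iii) is the conceptual bridge and the place where I would spend the most care. By Theorem \ref{Theorem_Selgrade1}(ii)--(iii), $\mathcal{M}_c^1=\mathbb{P}\mathcal{V}_c^1$ is the unique chain recurrent component of $\mathbb{P}\Phi^1$ whose intersection with $\mathcal{U}\times\mathbb{P}^{n,1}$ is nonempty. On the other hand, Theorem \ref{Theorem_equivalence} shows that the lift $\mathcal{E}_c^1$ defined by (\ref{lift_E}) is a maximal chain transitive set of $\mathbb{P}\Phi^1$, and Corollary \ref{Corollary_splitE}(i) guarantees $E_c^1\cap\mathbb{P}^{n,1}\neq\varnothing$, hence $\mathcal{E}_c^1$ meets $\mathcal{U}\times\mathbb{P}^{n,1}$ as well. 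The delicate point is that the ``maximal chain transitive set'' produced by Theorem \ref{Theorem_equivalence} and the ``chain recurrent component'' produced by Selgrade's theorem must refer to the same object on the compact base $\mathcal{U}\times\mathbb{P}^n$; this is guaranteed by the general principle recalled in Subsection \ref{Subsection2.1} that on a compact metric space the maximal chain transitive sets are exactly the connected components of the chain recurrent set. Once this identification is in hand, uniqueness forces $\mathcal{E}_c^1=\mathcal{M}_c^1$, and the identical argument applied to the hyperbolic subsystem yields $\mathcal{E}_c^{h,1}=\mathcal{M}_c^{h,1}$, completing (iii).
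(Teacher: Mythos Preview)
Your proposal is correct and follows essentially the same route as the paper's proof: (i) via Theorem \ref{Theorem_ccs1} together with Lemma \ref{Lemma_projection}(iii), (ii) via Corollary \ref{Corollary_splitE}(iii), (iv) via Theorem \ref{Theorem_Selgrade1}(iv), and (iii) via Theorem \ref{Theorem_equivalence}. You supply more detail than the paper does---in particular for compactness in (i) and for the identification of maximal chain transitive sets with chain recurrent components in (iii)---but the underlying arguments are the same.
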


\begin{proof}
Assertion (i) follows from the characterization of the chain control set in
Theorem \ref{Theorem_ccs1} using that $A^{h}$ is hyperbolic, and by Lemma
\ref{Lemma_projection}(iii). Assertion (iii) follows by (i) and Corollary
\ref{Corollary_splitE}(iii). Furthermore (iii) holds, since the lifts of chain
control sets are chain recurrent components by Theorem
\ref{Theorem_equivalence}, and (iv) holds by Theorem \ref{Theorem_Selgrade1}(iv).
\end{proof}

Define a map of the projective Poincar\'{e} spheres by%
\begin{equation}
P:\mathbb{P}^{n}\rightarrow\mathbb{P}^{n^{h}}:P(\mathbb{P}(x,r)):=\mathbb{P(}%
\pi^{h}x,r)\text{ for }(0,0)\not =(x,r)\in\mathbb{R}^{n}\times\mathbb{R}.
\label{P1}%
\end{equation}

\begin{proposition}
\label{Proposition_hyp2}(i) The chain control sets $E_{c}^{1}$ and
$E_{c}^{h,1}$ of the induced control systems on the Poincar\'{e} spheres
$\mathbb{P}^{n}$ and $\mathbb{P}^{n^{h}}$, respectively, satisfy%
\[
P(E_{c}^{1})\subset E_{c}^{h,1}=\overline{\mathbb{P}\left(  E^{h}%
\times\left\{  1\right\}  \right)  }=\overline{\mathbb{P}\left(  D_{0}%
^{h}\times\left\{  1\right\}  \right)  }.
\]

(ii) The central Selgrade bundles $\mathcal{V}_{c}^{1}$ and $\mathcal{V}%
_{c}^{h,1}$ of the flow $\Phi^{1}$ on $\mathcal{U}\times\mathbb{R}^{n}%
\times\mathbb{R}$ and the flow $\Phi^{h,1}$ on $\mathcal{U}\times\left(
L^{+}\oplus L^{-}\right)  \times\mathbb{R}$, respectively, satisfy%
\[
\mathcal{V}_{c}^{1}\subset\left\{  (u,x,r)\in\mathcal{U}\times\mathbb{R}%
^{n}\times\mathbb{R}\left\vert (u,\pi^{h}x,r)\in\mathcal{V}_{c}^{h,1}\right.
\right\}  .
\]

\end{proposition}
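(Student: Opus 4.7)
The equalities in (i) are given by Proposition \ref{Proposition_hyp1}(i),(ii), so I focus on the inclusion $P(E_c^1)\subset E_c^{h,1}$ and on (ii). The governing object is the bundle morphism $Q:(u,x,r)\mapsto(u,\pi^h x,r)$ from $\mathcal{U}\times\mathbb{R}^{n+1}$ to $\mathcal{U}\times\mathbb{R}^{n^h+1}$. Because $\pi^h$ commutes with $A$ (as $L^0$ and $L^+\oplus L^-$ are $A$-invariant) and $\pi^h(Bu)=(\pi^h B)u$, the map $Q$ intertwines $\Phi^1$ with $\Phi^{h,1}$; at the projective level it becomes exactly $P$, which is defined off the singular locus $\mathbb{P}(L^0\times\{0\})$. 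My plan is to establish (ii) by splitting according to $r$, then to read off (i).

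\emph{Equator slice $r=0$.} Here (ii) asks for $\mathcal{V}_c^\infty\subset\mathcal{U}\times L^0\times\{0\}$. By Theorem \ref{Theorem_Selgrade1}(iii), $\mathcal{V}_c^\infty=\bigoplus_i\mathcal{V}_i^\infty$ over those indices with $\mathcal{U}\times\mathbb{P}(L(\lambda_i)\times\{1\})$ chain transitive in $\mathcal{U}\times\mathbb{P}^n$. I plan to rule out $\lambda_i\neq 0$ by a hyperbolicity argument in the spirit of Theorem \ref{Theorem_1_Eduardo}: for $\lambda_i>0$ (resp.\ $\lambda_i<0$), the exponential dilation (resp.\ contraction) of $e^{tA}$ on $L(\lambda_i)$ drives projective orbits of $\mathbb{P}(x,1)$ with $x\in L(\lambda_i)\setminus\{0\}$ to fixed equator points in $\mathbb{P}(L(\lambda_i)\times\{0\})$ as $t\to+\infty$ (resp.\ $-\infty$), and the required return to a prescribed point of $\mathbb{P}(L(\lambda_i)\times\{1\})$ cannot be realised at arbitrarily small $(\varepsilon,T)$-scales. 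With Proposition \ref{Proposition_in} providing the reverse inclusion, one obtains $\mathcal{V}_c^\infty=\mathcal{U}\times L^0\times\{0\}$, and at the projective level $E_c^1\cap\mathbb{P}^{n,0}=\mathbb{P}(L^0\times\{0\})$.

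\emph{Interior slice $r\neq 0$.} By linearity it suffices to treat $r=1$. Via the conjugacy $h^1$ of Proposition \ref{Proposition_e}(ii), the inclusion for this slice is equivalent to $\bar\pi^h(\mathcal{E}^\ast)\subset\mathcal{E}^h$, where $\mathcal{E}^\ast:=\{(u,x)\in\mathcal{U}\times\mathbb{R}^n:(u,\mathbb{P}(x,1))\in\mathcal{M}_c^1\}$ and $\bar\pi^h(u,x):=(u,\pi^h x)$. Corollary \ref{Corollary_splitE}(ii) gives $\mathcal{E}\subset\mathcal{E}^\ast$, while Theorem \ref{Theorem_ccs1} together with Lemma \ref{Lemma_projection}(iii) yield $\pi^h E=\pi^h\overline{D_0}\subset\overline{D_0^h}=E^h$, so that $\bar\pi^h(\mathcal{E})\subset\mathcal{E}^h$. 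To extend this to all of $\mathcal{E}^\ast$, I use that $\mathcal{E}^\ast$ is $\Phi$-invariant (by invariance of $\mathcal{M}_c^1$) and that $\bar\pi^h(\mathcal{E}^\ast)$ is bounded, thanks to the equator identification just obtained; consequently, for each $(u,x)\in\mathcal{E}^\ast$ the orbit $(\theta_t u,\varphi^h(t,\pi^h x,u))$ is two-sidedly bounded in $\mathbb{R}^{n^h}$, and hyperbolicity of $A^h$ together with Proposition \ref{Proposition_hyp1}(iv) forces $\pi^h x=-e(u,0)$, i.e., $(u,\pi^h x)\in\mathcal{E}^h$.

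Part (i) then follows immediately: if $p\in E_c^1$ and $P(p)$ is defined, the equator result forces $p\in\mathbb{P}^{n,1}$; writing $p=\mathbb{P}(x,1)$ with $(u,x,1)\in\mathcal{V}_c^1$, (ii) gives $(u,\pi^h x,1)\in\mathcal{V}_c^{h,1}$, whence $P(p)=\mathbb{P}(\pi^h x,1)\in E_c^{h,1}$. The \emph{main obstacle} is the equator step above: one needs a quantitative estimate of the exponential dilation/contraction rate of $e^{tA}$ on each Lyapunov space $L(\lambda_i)$ with $\lambda_i\neq 0$, together with an argument that no $(\varepsilon,T)$-chain can bridge the resulting gap between a prescribed point of $\mathbb{P}(L(\lambda_i)\times\{1\})$ and the corresponding equator attractor. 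The remaining steps are largely bookkeeping built on ingredients already established in Sections \ref{Section2}--\ref{Section4} and in Proposition \ref{Proposition_hyp1}.
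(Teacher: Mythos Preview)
Your route is substantially different from the paper's and runs in the opposite direction: the paper proves (i) first and then reads off (ii). For (i), the paper observes that $(x,r)\mapsto(\pi^h x,r)$ is a projection decreasing the norm and asserts that the induced projective map $P$ therefore decreases the metric (\ref{metric_P}); controlled $(\varepsilon,T)$-chains in $\mathbb{P}^n$ then map to controlled $(\varepsilon,T)$-chains in $\mathbb{P}^{n^h}$, forcing $P(E_c^1)$ into a chain control set which, since it meets $\mathbb{P}^{n^h,1}$, must be $E_c^{h,1}$. Part (ii) follows from (i) together with the lift identification $\mathcal{M}_c^1=\mathcal{E}_c^1$, $\mathcal{M}_c^{h,1}=\mathcal{E}_c^{h,1}$ of Proposition \ref{Proposition_hyp1}(iii). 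No equator analysis is needed at this stage.

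The equator step is the genuine gap in your plan. First, in the paper's architecture the equality $\mathcal{V}_c^\infty=\mathcal{U}\times L^0\times\{0\}$ is a \emph{consequence} of Theorem \ref{Theorem_central}, whose proof (Step 2) invokes Proposition \ref{Proposition_hyp2}; so you would need an independent argument to avoid circularity. Second, your sketch for ruling out $\lambda_i\neq 0$ treats only the homogeneous flow $e^{tA}$, whereas the chain transitivity criterion in Theorem \ref{Theorem_Selgrade1}(iii) concerns the full controlled flow $\mathbb{P}\Phi^1$: the projective orbit through $\mathbb{P}(x,1)$ is $t\mapsto\mathbb{P}(\varphi(t,x,u),1)$, and for $x\in L(\lambda_i)$ a suitable control $u$ can keep $\varphi(t,x,u)$ bounded (for instance whenever $x$ lies in the control set $D_0$), so the drift-to-equator picture does not hold in general. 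Since your $r\neq 0$ argument draws the boundedness of $\bar\pi^h(\mathcal{E}^\ast)$ from the equator step, the gap propagates through the rest of the proof.
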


\begin{proof}
(i) The map $\mathbb{R}^{n}\times\mathbb{R}\rightarrow\left(  L^{+}\oplus
L^{-}\right)  \times\mathbb{R}:(x,r):=\mathbb{(}\pi^{h}x,r)$ is a projection
decreasing the norm, hence the map $P$ decreases the distance. Thus controlled
$(\varepsilon,T)$-chains in $\mathbb{P}^{n}$ are mapped to controlled
$(\varepsilon,T)$-chains in $\mathbb{P}^{n^{h}}$ showing that the image
$P(E_{c}^{1})$ is contained in a chain control set. Since $E_{c}^{1}%
\cap\mathbb{P}^{n,1}\not =\varnothing$ it follows that this chain control set
has nonvoid intersection with $\mathbb{P}^{n^{h},1}$, hence coincides with
$E_{c}^{h,1}$. The equalities hold by Proposition \ref{Proposition_hyp1}(ii).

(ii) This follows from (i) and Proposition \ref{Proposition_hyp1}(iii)..
\end{proof}

The following theorem characterizes the central Selgrade bundle $\mathcal{V}%
_{c}^{1}$ and, equivalently, the chain control set $E_{c}^{1}$ on the
projective Poincar\'{e} sphere $\mathbb{P}^{n}$. Recall that $\mathcal{V}%
_{c}^{h,1}$ is the central Selgrade bundle of the hyperbolic part and
$\mathcal{V}_{0}^{\infty}=\mathcal{U}\times L^{0}\times0$ with the center
Lyapunov space $L^{0}=L(0)$.

\begin{theorem}
\label{Theorem_central}(i) The central Selgrade bundle $\mathcal{V}_{c}^{1}$
of the control flow $\Phi^{1}$ on $\mathcal{U}\times\mathbb{R}^{n}%
\times\mathbb{R}$ associated with control system (\ref{1}) is given by%
\begin{equation}
\mathcal{V}_{c}^{1}=\mathcal{V}_{c}^{h,1}\oplus\mathcal{V}_{0}^{\infty
}=\left\{  (u,-re(u,0)+L^{0},r)\in\mathcal{U}\times\mathbb{R}^{n}%
\times\mathbb{R}\left\vert u\in\mathcal{U},r\in\mathbb{R}\right.  \right\}  ,
\label{V_c}%
\end{equation}
where $e(u,t),t\in\mathbb{R}$ is the unique bounded solution for $u$ of the
induced system (\ref{hyp}) on $L^{+}\oplus L^{-}$. The dimension is
$\dim\mathcal{V}_{c}^{1}=1+\dim L^{0}$.

(ii) The chain control set $E_{c}^{1}$ on the Poincar\'{e} sphere
$\mathbb{P}^{n}$ is given by%
\[
E_{c}^{1}=\overline{\left\{  \mathbb{P}\left(  -e(u,0)+L^{0},1\right)
\left\vert u\in\mathcal{U}\right.  \right\}  }.
\]

\end{theorem}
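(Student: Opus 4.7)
For part (i), I plan to combine the inclusion $\mathcal{V}_c^1 \subset \{(u,x,r) : (u,\pi^h x,r) \in \mathcal{V}_c^{h,1}\}$ from Proposition \ref{Proposition_hyp2}(ii) with the explicit description of $\mathcal{V}_c^{h,1}$ from Proposition \ref{Proposition_hyp1}(iv) to get an upper bound on $\mathcal{V}_c^1$, then close the argument by a dimension count using Proposition \ref{Proposition_in} together with Theorem \ref{Theorem_Selgrade1}(iii). Writing $x = x^h + x^0$ with $x^h \in L^+ \oplus L^-$ and $x^0 \in L^0 = \ker\pi^h$, the condition $(u,\pi^h x,r) \in \mathcal{V}_c^{h,1}$ forces $x^h = -re(u,0)$, so the ambient set picked out by this condition equals $N := \{(u,-re(u,0)+w,r) : u \in \mathcal{U},\, r \in \mathbb{R},\, w \in L^0\}$. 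This is the fiberwise direct sum $\mathcal{V}_c^{h,1} \oplus \mathcal{V}_0^\infty$ inside $\mathcal{U} \times \mathbb{R}^n \times \mathbb{R}$ (the $L^+ \oplus L^-$- and $L^0$-components make the intersection trivial), and it has constant fiber dimension $1 + \dim L^0$, spanned by $(-e(u,0),1)$ together with $L^0 \times \{0\}$.

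Then $\mathcal{V}_c^1 \subset N$ gives $\dim \mathcal{V}_c^1 \leq 1 + \dim L^0$. For the matching lower bound, Proposition \ref{Proposition_in} yields $\mathcal{V}_0^\infty \subset \mathcal{V}_c^1$; since $\mathcal{V}_0^\infty \subset \mathcal{U} \times \mathbb{R}^n \times 0$ it is actually contained in $\mathcal{V}_c^\infty = \mathcal{V}_c^1 \cap (\mathcal{U} \times \mathbb{R}^n \times 0)$, and by Theorem \ref{Theorem_Selgrade1}(iii) this gives $\dim \mathcal{V}_c^1 = 1 + \dim \mathcal{V}_c^\infty \geq 1 + \dim L^0$. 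The two bounds collapse to equality, forcing $\mathcal{V}_c^1 = N = \mathcal{V}_c^{h,1} \oplus \mathcal{V}_0^\infty$, which is (i).

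For (ii) I would apply Corollary \ref{Corollary_splitE}(i) to identify $E_c^1$ with the image in $\mathbb{P}^n$ of $\mathcal{M}_c^1 = \mathbb{P}\mathcal{V}_c^1$. Substituting the formula from (i), the points of $E_c^1$ with last coordinate $r \neq 0$ are precisely those of the form $\mathbb{P}(-e(u,0) + w, 1)$ for $u \in \mathcal{U}$, $w \in L^0$ (after rescaling by $r$), while the remaining points lie on the equator and have the form $\mathbb{P}(w,0)$ with $w \in L^0 \setminus \{0\}$. The equator points are absorbed into the closure: for any such $w_0$, sending $t \to \infty$ in $(-e(u,0) + t w_0,\, 1)/\|(-e(u,0) + t w_0,\, 1)\|$ yields $(w_0,0)/\|w_0\|$, so $\mathbb{P}(w_0,0)$ is a limit of points of the first form. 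Since $E_c^1$ is closed (via $E_c^1 = \mathbf{R}^c(x) \cap \mathbf{C}^c(x)$ from Theorem \ref{Theorem_control_inv}(iii) combined with Proposition \ref{Proposition_ccs1}(iii)), this delivers the claimed equality.

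The main obstacle is making the dimension bound in (i) tight: the upper bound from Proposition \ref{Proposition_hyp2}(ii) is almost immediate, but matching it requires both the sharp identity $\dim \mathcal{V}_c^1 = 1 + \dim \mathcal{V}_c^\infty$ from Selgrade's theorem and the fact that the whole of $L^0$ (rather than just $L^0 \cap \mathcal{C}$) contributes to $\mathcal{V}_c^\infty$, which is what Proposition \ref{Proposition_in}, and ultimately Theorem \ref{Theorem_1_Eduardo}, supply. Once (i) is secured, (ii) reduces to bookkeeping plus the rescaling-to-the-equator argument above.
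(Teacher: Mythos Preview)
Your proof of (i) is correct but takes a genuinely different route from the paper's. Both the paper and you obtain the inclusion $\mathcal{V}_c^1\subset N:=\mathcal{V}_c^{h,1}\oplus\mathcal{V}_0^\infty$ from Proposition~\ref{Proposition_hyp2}(ii). For the reverse inclusion, however, the paper argues directly: it reduces $N\subset\mathcal{V}_c^1$ to the inclusion $\mathcal{V}_c^{h,1}\subset\pi^h\mathcal{V}_c^1$, which it reformulates projectively as $E_c^{h,1}\subset P(E_c^1)$, and then verifies the latter by exploiting approximate controllability inside $\mathrm{int}\,D_0$ together with the identity $E^{h}=\overline{\pi^h(D_0)}$. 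You instead close the gap by a dimension count, combining the upper bound $\dim\mathcal{V}_c^1\le 1+\dim L^0$ (from the fiberwise inclusion into $N$) with the lower bound $\dim\mathcal{V}_c^1=1+\dim\mathcal{V}_c^\infty\ge 1+\dim L^0$ supplied by Theorem~\ref{Theorem_Selgrade1}(iii) and Proposition~\ref{Proposition_in}. Your argument is shorter and sidesteps the chain of equivalences in the paper's Step~3; its cost is that it leans on the dimension identity $\dim\mathcal{V}_c^1=1+\dim\mathcal{V}_c^\infty$ imported from \cite{ColS23}, whereas the paper's route is more self-contained at this particular step and makes visible the role of the control set $D_0$.

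For (ii) the paper simply states that it is a consequence of (i); your derivation via Corollary~\ref{Corollary_splitE}(i) and the rescaling-to-the-equator argument is a correct way to unpack that remark. One small simplification: rather than invoking Theorem~\ref{Theorem_control_inv}(iii) and Proposition~\ref{Proposition_ccs1}(iii) to see that $E_c^1$ is closed, you can note that $\mathcal{M}_c^1=\mathbb{P}\mathcal{V}_c^1$ is compact (as a chain recurrent component in the compact space $\mathcal{U}\times\mathbb{P}^n$), so its projection $E_c^1$ to $\mathbb{P}^n$ is automatically compact.
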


\begin{proof}
Assertion (ii) is a consequence of (i). The second equality in (i) holds since
by Proposition \ref{Proposition_hyp1}(iv) the central Selgrade bundle
$\mathcal{V}_{c}^{h,1}$ of the hyperbolic part is%
\[
\mathcal{V}_{c}^{h,1}=\left\{  (u,-re(u,0),r)\in\mathcal{U}\times
\mathbb{R}^{n^{h}}\times\mathbb{R}\left\vert u\in\mathcal{U},r\in
\mathbb{R}\right.  \right\}  .
\]
It remains to prove the first equality in (i).

\textbf{Step 1.} We claim that%
\[
\mathcal{V}^{\prime}:=\mathcal{V}_{c}^{h,1}\oplus\mathcal{V}_{0}^{\infty
}=\left\{  (u,-re(u,0)+L^{0},r)\in\mathcal{U}\times\mathbb{R}^{n}%
\times\mathbb{R}\left\vert u\in\mathcal{U},r\in\mathbb{R}\right.  \right\}
\]
defines a subbundle. This holds if it is closed and the fibers are linear with
constant dimension; cf. Colonius and Kliemann \cite[Lemma B.1.13]{ColK00}. In
fact, consider for $u\in\mathcal{U}$ the fiber%
\[
V_{u}^{\prime}=\left\{  (-re(u,0)+rx,r)\in\mathbb{R}^{n}\times\mathbb{R}%
\left\vert x\in L^{0},r\in\mathbb{R}\right.  \right\}  ,
\]
and let $(-r_{i}e(u,0)+x_{i},r_{i})\in V_{u}^{\prime}$ for $i=1,2$. Then for
all $\alpha,\beta\in\mathbb{R}$%
\begin{align*}
&  \alpha(-r_{1}e(u,0)+x_{1},r_{1})+\beta(-r_{2}e(u,0)+x_{2},r_{2})\\
&  =(-(\alpha r_{1}+\beta r_{2})e(u,0)+(\alpha x_{1}+\beta x_{2}),\alpha
r_{1}+\beta r_{2})\in V_{u}^{\prime}.
\end{align*}
The dimension is $\dim V_{u}^{\prime}=1+\dim L^{0}$. Furthermore, if
$(u_{k},-r_{k}e(u_{k},0)+x_{k},r_{k})\rightarrow(v,y,r)$ for $k\rightarrow
\infty$, it follows that $u_{k}\rightarrow u$ and $r_{k}\rightarrow r$. Then
(cf. Colonius and Santana \cite[Theorem 2.5 and proof of Theorem
3.1]{ColSan11}) it follows that the unique bounded solutions at time $t=0$
given by $e(u_{k},0)$ converge to $e(u,0)$, hence $-r_{k}e(u_{k}%
,0)+x_{k}\rightarrow re(u,0)+x$. This shows that $(v,y,r)=(u,-re(u,0)+rx,r)\in
V_{u}^{\prime}$, hence $\mathcal{V}^{\prime}$ a subbundle.

\textbf{Step 2. }The subbundles satisfy $\mathcal{V}_{c}^{1}\subset
\mathcal{V}^{\prime}$.

Write $x=\pi^{h}x+\pi^{0}x$ for $x\in\mathbb{R}^{n}$. By Proposition
\ref{Proposition_hyp2}(ii) every $(u,x,r)\in\mathcal{V}_{c}^{1}$ satisfies%
\[
(u,\pi^{h}x,r)\in\mathcal{V}_{c}^{h,1}=\left\{  (u,-re(u,0),r)\in
\mathcal{U}\times(L^{+}\oplus L^{-})\times\mathbb{R}\left\vert u\in
\mathcal{U},r\in\mathbb{R}\right.  \right\}
\]
implying $(u,x,r)=(u,\pi^{h}x+\pi^{0}x,r)=(u,-re(u,0)+\pi^{0}x,r)\in
\mathcal{V}^{\prime}.$

\textbf{Step 3. }Also the converse inclusion $\mathcal{V}^{\prime}%
\subset\mathcal{V}_{c}^{1}$ holds, hence $\mathcal{V}^{\prime}=\mathcal{V}%
_{c}^{1}$.

For the proof let $(u,y,r)\in\mathcal{V}^{\prime}=\mathcal{V}_{c}^{h,1}%
\oplus\mathcal{V}_{0}^{\infty}$. Observe first that by Proposition
\ref{Proposition_in} the inclusion $\mathcal{V}_{0}^{\infty}\subset
\mathcal{V}_{c}^{1}$ holds. If%
\begin{align}
\mathcal{V}_{c}^{h,1}  &  =\left\{  (u,-re(u,0),r)\in\mathcal{U}%
\times\mathbb{R}^{n}\times\mathbb{R}\left\vert r\in\mathbb{R}\right.  \right\}
\nonumber\\
&  \subset\left\{  (u,\pi^{h}x,r)\in\mathcal{U}\times\mathbb{R}^{n}%
\times\mathbb{R}\left\vert (u,x,r)\in\mathcal{V}_{c}^{1}\right.  \right\}
=:\pi^{h}\mathcal{V}_{c}^{1}, \label{REST_1}%
\end{align}
the assertion follows since by linearity this implies%
\[
(u,y,r)=(u,\pi^{h}y+\pi^{0}y,r)\in\mathcal{V}_{c}^{h,1}\oplus\mathcal{V}%
_{0}^{\infty}\subset\pi^{h}\mathcal{V}_{c}^{1}\oplus\mathcal{V}_{0}^{\infty
}\subset\mathcal{V}_{c}^{1}.
\]
Inclusion (\ref{REST_1}) is equivalent to
\begin{equation}
\mathcal{M}_{c}^{h,1}\subset\left\{  (u,\mathbb{P}\left(  \pi^{h}x,1\right)
)\in\mathcal{U}\times\mathbb{P}^{n}\left\vert (u,\mathbb{P}\left(  x,r\right)
)\in\mathcal{M}_{c}^{1}\right.  \right\}  . \label{REST_2}%
\end{equation}
By Proposition \ref{Proposition_hyp1}(iii) the chain transitive components
$\mathcal{M}_{c}^{h,1}$ and $\mathcal{M}_{c}^{1}$ are the lifts $\mathcal{E}%
_{c}^{h,1}$ and $\mathcal{E}_{c}^{1}$ of the chain control sets $E_{c}^{h,1}$
and $E_{c}^{1}$, respectively. Thus (\ref{REST_2}) is, with $P:\mathbb{P}%
^{n}\rightarrow\mathbb{P}^{n^{h}}$ defined by (\ref{P1}), equivalent to%
\begin{equation}
E_{c}^{h,1}\subset P(E_{c}^{1})=\left\{  \mathbb{P(}\pi^{h}x,1)\left\vert
\mathbb{P}(x,1)\in E_{c}^{1}\right.  \right\}  . \label{REST_3}%
\end{equation}
For the proof of inclusion (\ref{REST_3}) recall that by Corollary
\ref{Corollary_splitE}(iii) $E_{c}^{h,1}=\overline{\mathbb{P(}E^{h}%
\times\left\{  1\right\}  )}$ and by Proposition \ref{Proposition_hyp1}(i) the
chain control set $E^{h}$ and the control sets $D_{0}^{h}$ and $D_{0}$ are
related by%
\[
E^{h}=\overline{D_{0}^{h}},\text{ and }\pi^{h}(D_{0}\mathbb{)}=D_{0}%
^{h},\text{ hence }E^{h}=\overline{\pi^{h}(D_{0}\mathbb{)}}.
\]
It follows that%
\begin{equation}
E_{c}^{h,1}=\overline{\mathbb{P(}E^{h}\times\left\{  1\right\}  )}%
=\overline{\mathbb{P(}\overline{\pi^{h}(D_{0}\mathbb{)}}\times\left\{
1\right\}  )}. \label{R4}%
\end{equation}
The points in $\left(  \mathrm{int}D_{0}\right)  \times\left\{  1\right\}
\subset\mathbb{R}^{n}\times\mathbb{R}$ are controllable to each other. Hence
the same is true for the points in $\mathbb{P}\left(  \left(  \mathrm{int}%
D_{0}\right)  \times\left\{  1\right\}  \right)  \subset\mathbb{P}^{n}$ and it
follows that $\mathbb{P}\left(  D_{0}\times\left\{  1\right\}  \right)
\subset E_{c}^{1}$, thus $x\in D_{0}$ implies $\mathbb{P}(x,1)\in E_{c}^{1}$.
This shows that%
\[
\mathbb{P(}\pi^{h}(D_{0}\mathbb{)}\times\left\{  1\right\}  )=\left\{
\mathbb{P(}\pi^{h}x,1)\left\vert x\in D_{0}\right.  \right\}  \subset\left\{
\mathbb{P(}\pi^{h}x,1)\left\vert \mathbb{P}(x,1)\in E_{c}^{1}\right.
\right\}  .
\]
Taking closures and using (\ref{R4}) inclusion (\ref{REST_3}) follows. This
concludes the proof of Theorem \ref{Theorem_central}.
\end{proof}

Finally we compare the image $\mathbb{P}(E\times\left\{  1\right\}  )$ of the
chain control set in $\mathbb{R}^{n}$ and the chain control set $E_{c}^{1}$ on
the Poincar\'{e} sphere on $\mathbb{P}^{n}$.

\begin{corollary}
\label{Corollary_ccs}The chain control set $E_{c}^{1}$ on the Poincar\'{e}
sphere coincides with the closure of the image of the chain control set $E$ in
$\mathbb{R}^{n}$, $E_{c}^{1}=\overline{\mathbb{P}(E\times\left\{  1\right\}
)}$.
\end{corollary}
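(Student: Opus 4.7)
The plan is to establish the two inclusions in $E_c^1=\overline{\mathbb{P}(E\times\{1\})}$ separately. The inclusion $\overline{\mathbb{P}(E\times\{1\})}\subset E_c^1$ is immediate: Corollary \ref{Corollary_splitE}(ii) already gives $\mathbb{P}(E\times\{1\})\subset E_c^1$, and $E_c^1$, being a chain control set, is closed by Proposition \ref{Proposition_ccs1}(iii), so the closure of $\mathbb{P}(E\times\{1\})$ is contained in it.

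For the reverse inclusion, I would invoke the explicit formula from Theorem \ref{Theorem_central}(ii),
\[
E_c^1=\overline{\{\mathbb{P}(-e(u,0)+L^0,1)\mid u\in\mathcal{U}\}},
\]
so it suffices to prove that $-e(u,0)+L^0\subset E$ for every $u\in\mathcal{U}$. First I would observe that $e(u,\cdot)$ is the unique bounded solution of the hyperbolic subsystem (\ref{hyp}) on $L^+\oplus L^-$ (Proposition \ref{Proposition_hyp1}(iv)); since the hyperbolic part $A^h$ is hyperbolic, Corollary \ref{Corollary_splitE}(iii) applied to system (\ref{hyp}) guarantees that $-e(u,0)$ lies in the (compact) chain control set $E^h=\overline{D_0^h}$ of the hyperbolic part.

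The second ingredient is the identity $E=E^h+L^0$. To verify it, I would start from Theorem \ref{Theorem_ccs1} ($E=\overline{D_0}+L^0$), apply the decomposition $D_0=(\mathbf{C}(0)\cap L^+)\oplus(L^0\cap\mathcal{C})\oplus(\overline{\mathbf{R}(0)}\cap L^-)$ from Corollary \ref{Corollary_cs}(i), and use that taking closures respects the direct sum along complementary subspaces. The summand $\overline{L^0\cap\mathcal{C}}\subset L^0$ is absorbed by adding $L^0$, so $E=\overline{\mathbf{C}(0)\cap L^+}\oplus L^0\oplus(\overline{\mathbf{R}(0)}\cap L^-)$. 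On the other hand, Lemma \ref{Lemma_projection}(iii) gives $D_0^h=\pi^h D_0=(\mathbf{C}(0)\cap L^+)\oplus(\overline{\mathbf{R}(0)}\cap L^-)$, whose closure is $E^h=\overline{\mathbf{C}(0)\cap L^+}\oplus(\overline{\mathbf{R}(0)}\cap L^-)$. Thus $E=E^h+L^0$ as claimed.

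Putting the two ingredients together, for any $u\in\mathcal{U}$ and $y\in L^0$ one has $-e(u,0)+y\in E^h+L^0=E$, so $\mathbb{P}(-e(u,0)+y,1)\in\mathbb{P}(E\times\{1\})$. Taking the closure of the union over $u$ and $y$ yields $E_c^1\subset\overline{\mathbb{P}(E\times\{1\})}$, completing the proof. The only real work is the bookkeeping behind $E=E^h+L^0$; the main conceptual step — that the bounded trajectory $-e(u,0)$ sits inside $E^h$ — is handed to us by the hyperbolic case already treated in Corollary \ref{Corollary_splitE}(iii).
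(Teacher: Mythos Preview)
Your proposal is correct and follows essentially the same route as the paper: establish the easy inclusion via Corollary~\ref{Corollary_splitE}(ii) and closedness, then for the reverse inclusion use the formula of Theorem~\ref{Theorem_central}(ii) together with the two facts that the bounded solution $e(u,0)$ lies in $E^h$ and that $E=E^h+L^0$. The only cosmetic differences are that the paper obtains $e(u,\cdot)\subset E^h$ by a direct $\omega$/$\alpha$-limit argument rather than by citing Corollary~\ref{Corollary_splitE}(iii), and it derives $E=E^h+L^0$ from the identities $\mathbf{R}(0)+L^0=\mathbf{R}^h(0)+L^0$ and $\mathbf{C}(0)+L^0=\mathbf{C}^h(0)+L^0$ (Lemma~\ref{Lemma_projection}) instead of from the explicit component decomposition of $D_0$ in Corollary~\ref{Corollary_cs}(i); both are equivalent bookkeeping.
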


\begin{proof}
By Corollary \ref{Corollary_splitE}(ii) we already know that $\mathbb{P}%
(E\times\left\{  1\right\}  )\subset E_{c}^{1}$- Since chain control sets are
closed, it only remains to prove the converse inclusion. First we show that
the unique bounded solution of (\ref{hyp}) for control $u\in\mathcal{U}$
satisfies $e(u,\cdot)\subset E^{h}$, where $E^{h}$ is the chain control set of
(\ref{hyp}). Consider the set%
\[
\left\{  y\in\mathbb{R}^{n}\left\vert \exists t_{k}\rightarrow\infty
:\varphi^{h}(t_{k},e(u,0),u)=e(u,t_{k})\rightarrow y\right.  \right\}  ,
\]
where $\varphi^{h}(t,x,u),t\in\mathbb{R}$, denotes the solution of
(\ref{hyp}). This set is nonvoid since $e(u,\cdot)$ is bounded and, as an
$\omega$-limit set it is contained in a chain control set, hence in $E^{h}$.
Analogously,%
\[
\varnothing\not =\left\{  y\in\mathbb{R}^{n}\left\vert \exists t_{k}%
\rightarrow-\infty:\varphi^{h}(t_{k},e(u,0),u)\rightarrow y\right.  \right\}
\subset E^{h}.
\]
It follows that $e(u,t),t\in\mathbb{R}$, is contained in $E^{h}$. By Corollary
\ref{Corollary_cs} we know for the control sets $D_{0}\subset\mathbb{R}^{n}$
and $D_{0}^{h}\subset L^{+}\oplus L^{-}$ containing the origin that
\[
D_{0}=\overline{\mathbf{R}(0)}\cap\mathbf{C}(0)\text{ and }D_{0}^{h}%
=\overline{\mathbf{R}^{h}(0)}\cap\mathbf{C}^{h}(0).
\]
By Lemma \ref{Lemma_projection}(ii) $\mathbf{R}(0)\subset\mathbf{R}%
^{h}(0)+L^{0}$ and hence $\mathbf{R}(0)+L^{0}=\mathbf{R}^{h}(0)+L^{0}$.
Analogously it follows that $\mathbf{C}(0)+L^{0}=\mathbf{C}^{h}(0)+L^{0}$, and
hence%
\[
D_{0}+L^{0}=\left(  \overline{\mathbf{R}(0)}\cap\mathbf{C}(0)\right)
+L^{0}=\left(  \overline{\mathbf{R}^{h}(0)}+L^{0}\right)  \cap\left(
\mathbf{C}^{h}(0)+L^{0}\right)  =D_{0}^{h}+L^{0}.
\]
By Theorem \ref{Theorem_ccs1} the chain control sets are $E=\overline{D_{0}%
}+L^{0}$ and $E^{h}=\overline{D_{0}^{h}}$ and we obtain%
\[
E=\overline{D_{0}}+L^{0}=\overline{D_{0}^{h}}+L^{0}=E^{h}+L^{0}.
\]
By Theorem \ref{Theorem_central}(ii) it follows that%
\[
E_{c}^{1}=\overline{\mathbb{P}\left\{  \left(  -e(u,0)+L^{0},1\right)
\left\vert u\in\mathcal{U}\right.  \right\}  }\subset\overline{\mathbb{P}%
\left\{  \left(  E^{h}+L^{0}\right)  \times\left\{  1\right\}  \right\}
}=\overline{\mathbb{P}\left(  E\times\left\{  1\right\}  \right)  }.
\]
This concludes the proof of the corollary.
\end{proof}

\section{Examples\label{Section6}}

In this section we present two examples illustrating the results above. In the
first example the matrix $A$ is nonhyperbolic and the controllability subspace
is a proper subspace.

\begin{example}
Consider the two dimensional system%
\[
\left(
\begin{array}
[c]{c}%
\dot{x}\\
\dot{y}%
\end{array}
\right)  =\left(
\begin{array}
[c]{cc}%
0 & 0\\
0 & -1
\end{array}
\right)  \left(
\begin{array}
[c]{c}%
x\\
y
\end{array}
\right)  +\left(
\begin{array}
[c]{c}%
0\\
1
\end{array}
\right)  u(t),\,u(t)\in\lbrack-1,1].
\]
The controllability subspace is $\mathcal{C}=\operatorname{Im}[B,AB]=0\times
\mathbb{R}$. The control set $D_{0}$ containing $0\in\mathbb{R}^{2}$ is given
by $D_{0}=0\times\left[  -1,1\right]  \subset\mathcal{C}=0\times\mathbb{R}$
with nonvoid interior in $\mathcal{C}$. The center subspace of $A$ is
$L^{0}=L(0)=\mathbb{R}\times0$ and by Theorem \ref{Theorem_ccs1} the chain
control set is%
\[
E=\overline{D_{0}}+L^{0}=\left(  0\times\left[  -1,1\right]  \right)  +\left(
\mathbb{R}\times0\right)  =\mathbb{R}\times\lbrack-1,1].
\]
Consider the subbundles $\mathcal{V}_{0}=\mathcal{U}\times L(0)=\mathcal{U}%
\times\left(  \mathbb{R}\times0\right)  $ and (cf. Proposition
\ref{Proposition_hyp1}(iv))%
\[
\mathcal{V}_{c}^{h,1}=\left\{  (u,-re(u,0),r)\in\mathcal{U}\times
\mathbb{R}\times\mathbb{R}\left\vert u\in\mathcal{U},r\in\mathbb{R}\right.
\right\}  ,
\]
where $e(u,t),t\in\mathbb{R}$, is the unique bounded solution for
$u\in\mathcal{U}$ of the hyperbolic part. By Theorem \ref{Theorem_central}(i)
the central Selgrade bundle is%
\[
\mathcal{V}_{c}^{1}=\mathcal{V}_{c}^{h,1}\oplus\mathcal{V}_{0}^{\infty
}=\left\{  (u,-re(u,0)+x,0,r)\left\vert u\in\mathcal{U},x\in\mathbb{R}%
,r\in\mathbb{R}\right.  \right\}  ,
\]
A little computation shows that the unique bounded solution of $\dot
{y}(t)=-y(t)+u(t)$ is%
\begin{equation}
e(u,t):=\int_{-\infty}^{t}e^{-(t-s)}u(s)ds,t\in\mathbb{R}. \label{bounded}%
\end{equation}
Thus it follows that%
\[
\mathcal{V}_{c}^{1}=\left\{  \left.  \left(  u,x,-r\int_{-\infty}^{0}%
e^{s}u(s)ds,r\right)  \right\vert u\in\mathcal{U},x\in\mathbb{R}%
,r\in\mathbb{R}\right\}  \subset\mathcal{U}\times\mathbb{R}^{2}.
\]
Corollary \ref{Corollary_ccs} shows that the chain control set $E_{c}^{1}$ on
the projective Poincar\'{e} sphere is%
\[
E_{c}^{1}=\mathbb{P}\left(  E\times\left\{  1\right\}  \right)  =\mathbb{P}%
\left(  \mathbb{R}\times\lbrack-1,1]\times\left\{  1\right\}  \right)
\subset\mathbb{P}^{2}.
\]

\end{example}

The next example is controllable and the matrix $A$ is hyperbolic.

\begin{example}
Consider%
\begin{equation}
\left(
\begin{array}
[c]{c}%
\dot{x}\\
\dot{y}%
\end{array}
\right)  =\left(
\begin{array}
[c]{cc}%
1 & 0\\
0 & -1
\end{array}
\right)  \left(
\begin{array}
[c]{c}%
x\\
y
\end{array}
\right)  +\left(
\begin{array}
[c]{c}%
1\\
1
\end{array}
\right)  u(t),u(t)\in\lbrack-1,1]. \label{6.2}%
\end{equation}
The controllability subspace is $\mathcal{C}=\mathbb{R}^{2}$ and inspection of
the phase portrait shows that the control set is $D_{0}=\left(  -1,1\right)
\times\left[  -1,1\right]  $. By Theorem \ref{Theorem_ccs1} the chain control
set is%
\[
E=\overline{D_{0}}+L^{0}=\overline{D_{0}}=[-1,1]\times\lbrack-1,1].
\]
By Theorem \ref{Theorem_central} the central Selgrade bundle is%
\[
\mathcal{V}_{c}^{1}=\mathcal{V}_{c}^{h,1}=\left\{  (u,-re(u,0),r)\left\vert
x\in L^{0},r\in\mathbb{R}\right.  \right\}  \subset\mathcal{U}\times
\mathbb{R}^{2},
\]
where $e(u,t),t\in\mathbb{R}$, is the bounded solution of (\ref{6.2}) for
$u\in\mathcal{U}$. The bounded solution of $\dot{y}(t)=-y(t)+u(t)$ is given by
(\ref{bounded}). The bounded solution of $\dot{x}(t)=x(t)+u(t)$ is
$x_{b}(t):=-\int_{-\infty}^{-t}e^{t+s}u(-s)ds,t\in\mathbb{R}$. Thus
\[
\mathcal{V}_{c}^{1}=\left\{  \left.  \left(  u,r\int_{-\infty}^{0}%
e^{s}u(-s)ds,-r\int_{-\infty}^{0}e^{s}u(s)ds,r\right)  \right\vert
u\in\mathcal{U},r\in\mathbb{R}\right\}  .
\]
Corollary \ref{Corollary_ccs} shows that the chain control set $E_{c}^{1}$ on
the projective Poincar\'{e} sphere is%
\[
E_{c}^{1}=\left\{  \mathbb{P}\left.  \left(  -\int_{-\infty}^{0}%
e^{s}u(-s)ds,-\int_{-\infty}^{0}e^{s}u(s)ds,1\right)  \right\vert
u\in\mathcal{U}\right\}  =\mathbb{P}\left(  E\times\left\{  1\right\}
\right)  .
\]
\bigskip
\end{example}

\end{document}